% SIAM Article Template
\documentclass[review,onefignum,onetabnum, final]{siamart171218}

%% Packages and macros go here
\usepackage{amsfonts}
\usepackage{graphicx}
\usepackage{epstopdf}
\usepackage{algorithmic}
\usepackage{amsopn}
\usepackage{amssymb}

\usepackage{todonotes}

\usepackage{ulem}
\normalem

\ifpdf
  \DeclareGraphicsExtensions{.eps,.pdf,.png,.jpg}
\else
  \DeclareGraphicsExtensions{.eps}
\fi

% Add a serial/Oxford comma by default.

% Used for creating new theorem and remark environments
\newsiamremark{remark}{Remark}
\newsiamremark{hypothesis}{Hypothesis}
\crefname{hypothesis}{Hypothesis}{Hypotheses}
\newsiamthm{claim}{Claim}

% Sets running headers as well as PDF title and authors
\headers{Optimal Control of Perfect Plasticity}{C.~Meyer and S.~Walther}

% Title. If the supplement option is on, then "Supplementary Material"
% is automatically inserted before the title.
\title{Optimal Control of Perfect Plasticity \\
Part II: Displacement Tracking\thanks{Submitted to the editors DATE.
\funding{This research was supported by the German Research Foundation (DFG) under grant 
number~ME~3281/9-1 within the priority program Non-smooth and Complementarity-based
Distributed Parameter Systems: Simulation and Hierarchical Optimization (SPP~1962).}}}

% Authors: full names plus addresses.
\author{
Christian Meyer\thanks{TU Dortmund, Faculty of Mathematics,
Vogelpothsweg 87, 44227 Dortmund, Germany 
  (\email{christian2.meyer@tu-dortmund.de},
  \url{http://www.mathematik.tu-dortmund.de/de/personen/person/Christian+Meyer.html},
  \email{stephan.walther@tu-dortmund.de},
  \url{http://www.mathematik.tu-dortmund.de/de/personen/person/Stephan+Walther.html}).}
\and Stephan Walther\footnotemark[2]}

% Optional PDF information
\ifpdf
\hypersetup{
  pdftitle={Optimal Control of Perfect Plasticity \\Part II: Displacement Tracking},
  pdfauthor={C.~Meyer and S.~Walther}
}
\fi

% The next statement enables references to information in the
% supplement. See the xr-hyperref package for details.

%\externaldocument{ex_supplement}

% FundRef data to be entered by SIAM
%<funding-group>
%<award-group>
%<funding-source>
%<named-content content-type="funder-name"> 
%</named-content> 
%<named-content content-type="funder-identifier"> 
%</named-content>
%</funding-source>
%<award-id> </award-id>
%</award-group>
%</funding-group>

\usepackage[notref,notcite]{showkeys}

\usepackage{esint}

\newcommand{\wordDefinition}[1]{\emph{#1}}
\newcommand{\shortspace}{\text{ \ \ }}
\newcommand{\mediumspace}{\text{ \ \ \ \ }}
\newcommand{\largespace}{\text{ \ \ \ \ \ \ }}

\newcommand{\bdot}{\boldsymbol{.}}
\newcommand{\boverdot}[1]{\overset{\bdot}{#1}}

\newcommand{\Rn}{\mathbb{R}^{n}}

\newcommand{\Rnns}{\mathbb{R}^{n \times n}_\textup{sym}}

\newcommand{\symnabla}{\nabla^{s}}

\newcommand{\lz}[1]{L^{2}(\Omega;#1)}

\newcommand{\scalarproduct}[3]{\left( #1 , #2 \right)_{#3}}

\newcommand{\norm}[2]{\|#1\|_{#2}}

\newcommand{\sequence}[2]{\{ #1_{#2} \}_{#2 \in \mathbb{N}}}

%%%%%%%%%%%%%%%%%%%%%%%%%%%%%%%%%%%%%%%%%%%%%%%%%%%%%
% Christian's shortcuts and commands

\renewcommand{\AA}{\mathcal{A}}

\newcommand{\EE}{\mathcal{E}}
\newcommand{\HH}{\mathcal{H}}

\newcommand{\UU}{\mathcal{U}}

\newcommand{\LL}{\mathcal{L}}

\newcommand{\KK}{\mathcal{K}}
\newcommand{\N}{\mathbb{N}}
\newcommand{\R}{\mathbb{R}}
\newcommand{\embed}{\hookrightarrow}
\newcommand{\weak}{\rightharpoonup}
\newcommand{\dual}[2]{\langle #1 , #2 \rangle}
\newcommand{\ddp}[2]{\frac{\partial #1}{\partial #2}}

\newcommand{\Rs}{\R^{d\times d}_\textup{sym}}
\newcommand{\Cb}{\mathbb{C}}

\newcommand{\Ab}{\mathbb{A}}
\renewcommand{\div}{\operatorname{div}}

\newcommand{\supp}{\operatorname{supp}}
\newcommand{\sign}{\operatorname{sign}}
\renewcommand{\div}{\operatorname{div}}
\renewcommand{\ker}{\operatorname{ker}}
\newcommand{\tr}{\operatorname{tr}}
\newcommand{\dist}{\operatorname{dist}}
\newcommand{\Mf}{\mathfrak{M}}
\newcommand{\BD}{\mathrm{BD}}
\renewcommand{\d}{\textup{d}}
\newcommand{\id}{\operatorname{Id}}
\newcommand{\e}{w}
\newcommand{\E}{W}
\newcommand{\bL}{\mathbf{L}}
\newcommand{\Lt}{\mathbb{L}}
\newcommand{\bW}{\mathbf{W}}
\newcommand{\Wt}{\mathbb{W}}
\newcommand{\bH}{\mathbf{H}}
\newcommand{\Ht}{\mathbb{H}}

\newtheorem{example}[theorem]{Example}
\newtheorem{notation and assumption}[theorem]{Notation and Assumption}

\begin{document}

\maketitle

% REQUIRED
\begin{abstract}
    The paper is concerned with an optimal control problem governed by the rate-independent system 
    of quasi-static perfect elasto-plasticity. 
    The objective is optimize the displacement field in the domain occupied by the body by means of prescribed Dirichlet boundary data, 
    which serve as control variables. The arising optimization problem is nonsmooth for several reasons, in particular, since the 
    control-to-state mapping is not single-valued. We therefore apply a Yosida regularization to obtain a single-valued 
    control-to-state operator. Beside the existence of optimal solutions, their approximation by means of this regularization approach
    is the main subject of this work.
    It turns out that a so-called reverse approximation guaranteeing the existence of a suitable recovery sequence can only be shown 
    under an additional smoothness assumption on at least one optimal solution.
\end{abstract}

% REQUIRED
\begin{keywords}
    Optimal control of variational inequalities,  perfect plasticity, rate-independent systems, 
    Yosida regularization, reverse approximation
\end{keywords}

% REQUIRED
\begin{AMS}
  49J20, 49J40, 74C05
\end{AMS}

%%%%%%%%%%%%%%%%%%%%%%%%%%%%%%%%%%%%%%%%%%%%%%%%%%
%%%%%%%%%%%%%%%%%%%%%%%%%%%%%%%%%%%%%%%%%%%%%%%%%%
\section{Introduction}
\label{sec:1}
In this paper, we investigate the following optimal control problem governed by the equations of 
\emph{quasi-static perfect plasticity} at small strain:
\begin{equation}\label{eq:optprobstrong}
\tag{P}
\left\{\quad
\begin{aligned}
	&\min \shortspace  J(u,u_D) := \Psi(u) + \frac{\alpha}{2}\norm{u_D}{H^1(0,T;H^2(\Omega;\R^n))}^2 \\
	&\;\begin{aligned}
	    &\text{s.t.}  \quad &
         - \div \sigma &= 0  &&\text{ in } \Omega, \\
        & & \sigma &= \mathbb{C} (\symnabla u - z)  &&\text{ in } \Omega, \\
	    & & \boverdot{z} &\in \partial I_{\mathcal{K}(\Omega)}(\sigma)   &&\text{ in } \Omega, \\
	    & & u &= u_D   &&\text{ on } \Gamma_D, \\
	    & & \sigma\nu &= 0   &&\text{ on } \Gamma_N, \\
	    & & u(0) &= u_0, \quad \sigma(0) = \sigma_0  &&\text{ in } \Omega,\\[1ex]
   &\text{and} \quad &  u_D(0) &= u_0 & & \text{ on }\Gamma_D.
    \end{aligned}
\end{aligned}\right.
\end{equation}
Herein, $u:(0,T)\times \Omega \to \R^n$, $n=2,3$, is the displacement field, while 
$\sigma, z:(0,T)\times \Omega \to \R^{n\times n}$ are the stress tensor and the plastic strain.
The boundary of $\Omega$ is split in two disjoint parts $\Gamma_D$ and $\Gamma_N$ 
with outward unit normal $\nu$. Moreover, $\Cb$ is the elasticity tensor 
and $\KK(\Omega)$ denotes the set of feasible stresses. The initial data $u_0$ and $\sigma_0$ 
are given and fixed. The Dirichlet data $u_D$ represent the control variable and
 $\alpha>0$ a fixed Tikhonov regularization parameter.
The objective $\Psi$ only contains the displacement field. Objectives involving the stress are considered in 
a companion paper \cite{meywal}.
This is the reason for calling \eqref{eq:optprobstrong} \emph{displacement tracking problem}. 
A mathematically rigorous version of \eqref{eq:optprobstrong} involving the function spaces 
and a rigorous notion of solutions for the state equation will be formulated in 
\cref{sec:stateeq} and \ref{sec:existence} below.
The precise assumptions on the data are given in \cref{sec:2}.
Regarding to a more detailed description of the plasticity model, we refer to \cite{OttosenRistinmaa2005:1}
and the references therein.

Some words concerning our choice of the control variable are in order:
In general, Dirichlet control problems provide particular difficulties due to regularity issues, when control functions 
in $L^2(\partial\Omega)$ are considered, see e.g.\ \cite{mayranvex13}. 
Nonetheless, we consider the Dirichlet displacement as control variables instead of distributed loads 
or forces on the Neumann boundary due to the \emph{safe load condition}. 
It is well known that the existence of solutions for the perfect plasticity system can only be shown under this additional condition
(see e.g.\ \cite{suquet, dalMaso}), 
which would lead to rather complex control constraints and it is a completely open question how to 
incorporate these constraints in the analysis of \eqref{eq:optprobstrong}. For this reason, 
we focus on the Dirichlet control problem. A possible realization of these controls 
by means of an additional linear elasticity equation avoiding the $H^2$-norm in the objective
 is elaborated in the companion paper \cite{meywal}.

Beside the safe-load condition, 
problem \eqref{eq:optprobstrong} exhibits several additional particular challenges. First of all, it is obviously nonsmooth 
due to the convex subdifferential appearing in the state equation. 
Moreover, the state equation is in general not uniquely solvable and its solutions significantly lack regularity, 
see \cite{suquet, dalMaso}.
Therefore, there is no single-valued control-to-state mapping and \eqref{eq:optprobstrong} should rather be regarded as 
an optimization problem in Banach space rather than an optimal control problem. 
Beside the existence of optimal solutions, our main goal is to approximate \eqref{eq:optprobstrong} via replacing 
$\partial I_{\KK(\Omega)}$ by its Yosida regularization. 
This is of course a classical procedure and, in order to show that the approximation works, i.e., that optimal solutions 
of the regularized problems converge to solutions of \eqref{eq:optprobstrong} (in a certain topology), 
the following steps have to be performed:
\begin{enumerate}
    \item The existence of (weak) accumulation points of sequences of optimal solutions of the regularized 
    problems have to be verified.
    \item Weak limits have to be feasible for the original problem \eqref{eq:optprobstrong}. 
    \item In order to show the optimality of the weak limit, one has to construct a \emph{recovery sequence} 
    for at least one optimal solution of the original problem.
\end{enumerate}
The last item is also known as \emph{reverse approximation} and might become a challenging task 
in the context of optimization of rate-independent systems, see \cite{MR09}.
This also happens to be the case here: 
In contrast to the perfect plasticity system, its regularized counterpart admits a unique solution with full regularity. 
It is therefore very unlikely that one can approximate \emph{every} solution of the perfect plasticity system 
by means of regularization and indeed, as classical examples demonstrate, this is in fact not true, see e.g.\ \cite{suquet} 
and \cref{ex:1d} below.
However, in the context of optimal control and optimization, respectively, we have the control as an additional variable at hand
and, in order to construct a recovery sequence, we have to find a sequence of \emph{tuples of state and control} feasible for 
the regularized problems so that the associated objective function values converge to the optimal value of \eqref{eq:optprobstrong}. This leads to much more flexibility in the construction of recovery sequences, provided 
that the set of controls is sufficiently rich. Unfortunately, this is not the case for our Dirichlet control and we need an 
additional control variable in terms of \emph{distributed loads} for the construction of a recovery sequence.
The idea is thus to introduce an additional load in the balance of momentum of the regularized problems and to drive
this load to zero for vanishing regularization parameter.
Our regularization procedure therefore does not only replace the convex subdifferential by its Yosida regularization, 
but also introduces a new additional control variable. To the best of our knowledge, this is a completely new idea.

Nevertheless, even with this additional control variable, we are only able to construct a recovery sequence under a fairly restrictive assumption. This assumption is caused by additional smoothness constraints as part of the regularized optimal control problems, 
which in turn are needed to pass to the limit in the regularized plasticity system, 
when the regularization parameter is driven to zero. If we assume that at least one optimal solution of the original 
(i.e., unregularized) optimization problem admits an admittedly high regularity, then we are able to construct 
a recovery sequence for this particular solution, which meets the smoothness constraints and is therefore
feasible for the regularized optimal control problems.
We thus obtain the desired approximation result under the assumption that there exists at least one ``smooth'' 
solution of \eqref{eq:optprobstrong}.

Let us put our work into perspective:
Quasi-static perfect plasticity is a rate-independent system. Optimization and optimal control of such systems 
have been considered by various authors and we only refer to \cite{Bro87, BK13, AO14, BK15, CHHM16, SWW17, Mun17, AC18, GW18} and the references therein.
Albeit still nonsmooth, optimization problems of this type substantially simplify, if the energy underlying the rate-independent system 
is uniformly convex. In quasi-static plasticity, this is the case, if hardening is present.
In this case, the plasticity system admits a unique solution in the energy space, which makes the construction of 
recovery sequences almost trivial. 
Nevertheless, the derivation of optimality conditions is still an intricate issue, see \cite{Wac12, Wac15, Wac16}. 
While all contributions mentioned so far deal with uniformly convex energies, 
the literature becomes rather scarce, when it comes to energies that lack strict convexity. 
In \cite{Rin08, Ste12, ELS13, EL14} the existence of optimal solutions for problems with non-convex energies are shown. 
To the best of our knowledge, the approximation of such problems has only been investigated in \cite{MR09, Rin09}, 
where a time-discretization instead of a regularization is considered. 
The approximation via discretization can however be hardly compared to our situation, since the discrete 
rate-independent systems are still not uniquely solvable so that there is still no (discrete) control-to-state map 
in contrast to the regularized setting. Therefore, the discrete optimization problems are still all but straight forward  
to solve, whereas the regularized optimal control problems are amenable for standard adjoint-based optimization methods.
%To resolve the reverse approximation problem, 
%the authors use $\varepsilon$-minimizers of the discrete problems, i.e., solutions that are only optimal up to a 
%constant $\varepsilon>0$. In this way, the set of discrete solutions is sufficiently enlarged in order
%to contain a suitable recovery sequence. It is however not clear, if this technique can be transferred to 
%the regularization approach, since the discrete problems in general also admit multiple solutions (as the continuous rate-independent system), 
%whereas the regularized problems are uniquely solvable, as already mentioned above, which 
%substantially shrinks the possibilities for constructing recovery sequences.
%Hence, we pursue a different strategy here and
%impose additional regularity assumptions on at least one optimal solution to enable the construction of a recovery sequence, 
%as depicted above.

The paper is organized as follows: 
After introducing our notation and standard assumptions in \cref{sec:2}, we introduce a rigorous notion of solution to the perfect plasticity 
system and recall the known results concerning the existence of solutions and the lack of uniqueness in \cref{sec:stateeq}. 
Then, \cref{sec:existence} is devoted to the existence of at least one (globally) optimal solution of \eqref{eq:optprobstrong}. 
In \cref{sec:reverse}, we lay the foundations for our reverse approximation argument for the construction of a recovery sequence, 
which is a basic ingredient for our main result in \cref{thm:main}. The last \cref{sec:convmin} covers this result and shows that 
solutions of \eqref{eq:optprobstrong} can indeed be approximated via Yosida regularization provided the mentioned 
regularity assumption is fulfilled.

%%%%%%%%%%%%%%%%%%%%%%%%%%%%%%%%%%%%%%%%%%%%%%%%%%
%%%%%%%%%%%%%%%%%%%%%%%%%%%%%%%%%%%%%%%%%%%%%%%%%%
\section{Notation and Standing Assumptions}
\label{sec:2}

We start with a short introduction in the notation used throughout the paper and in parallel 
list our standing assumptions. The latter are tacitly assumed for the rest of the
paper without mentioning them every time.

\paragraph{General notation}
Given two vector spaces $X$ and $Y$, we denote the space of linear and continuous functions
from $X$ into $Y$ by $\LL(X, Y)$. If $X = Y$, we simply write $\LL(X)$. 
The dual space of $X$ is denoted by $X^* = \LL(X, \mathbb{R})$.
If $H$ is a Hilbert space, we denote its scalarproduct by $\scalarproduct{\cdot}{\cdot}{H}$.
For the whole paper, we fix the final time $T > 0$.
To shorten the notation, Bochner-spaces are abbreviated by
$L^p(X) := L^p(0,T; X)$, $W^{1,p}(X) := W^{1,p}(0,T; X)$ ($p\in [1,\infty]$), and $C(X) := C([0,T]; X)$. 
Note that functions in $C(X)$ are continuous on the whole time interval.
When $G \in \LL(X; Y)$ is a linear and continuous operator, we can define an operator
in $\LL(L^p(X); L^p(Y))$ by $G(u)(t) := G(u(t))$ for all $u \in L^p(X)$ and for almost all $t \in [0,T]$,
we denote this operator also by $G$, that is, $G \in \LL(L^p(X); L^p(Y))$, and analog for
Bochner-Sobolev spaces, i.e., $G \in \LL(W^{1,p}(X); W^{1,p}(Y))$.

Given a coercive operator $G \in \LL(H)$ in a Hilbert space $H$, 
we denote its coercivity constant by $\gamma_G$, i.e., $\scalarproduct{Gh}{h}{H} \geq 
\gamma_G \norm{h}{H}^2$ for all $h \in H$.
With this operator we can define a new scalar product, which induces an equivalent norm, by
$H \times H \ni (h_1, h_2) \mapsto \scalarproduct{Gh_1}{h_2}{H} \in \mathbb{R}$.
We denote the Hilbert space equipped with this scalar product by $H_G$, that is
$\scalarproduct{h_1}{h_2}{H_G} = \scalarproduct{Gh_1}{h_2}{H}$ for all $h_1,h_2 \in H$.

If $p \in [1, \infty]$, then we denote its conjugate exponent by
$p'$, that is $\frac{1}{p} + \frac{1}{p'} = 1$.
Furthermore, $c,C>0$ are generic constants.

\paragraph{Matrices}

Given a matrix $\tau \in \R^{n\times n}$, we define its deviatoric (i.e., trace-free) part as 
\begin{equation*}
    \tau^D := \tau - \tfrac{1}{n} \tr(\tau) I 
\end{equation*}
and use the same notation for matrix-valued functions.
The Frobenius norm is denoted by $|A|_F^2 = \sum_{i,j=1}^n A_{ij}^2$ 
for $A\in \R^{n\times n}$ and for the associated scalar product, we write $A:B = \sum_{i,j=1}^n A_{ij} B_{ij}$, 
$A,B\in \R^{n\times n}$.
By $\Rnns$, we denote the space of symmetric matrices.

\paragraph{Domain}
The domain $\Omega\subset\R^n$, $n\in \mathbb{N}$, $n \geq 2$, is bounded 
of class $C^1$. The boundary consists of two disjoint 
measurable parts $\Gamma_N$ and $\Gamma_D$
such that $\Gamma=\Gamma_N \cup \Gamma_D$. While 
$\Gamma_N$ is a relatively open subset, $\Gamma_D$ is a relatively closed.
We moreover suppose that $\Gamma_D$ has a nonempty relative interior.
In addition, the set $\Omega \cup \Gamma_N$ is regular in the sense of Gr\"oger, cf.\ \cite{Gro89}.
Throughout the article, $\nu:\partial\Omega \to \R^n$ denotes the outward unit normal vector.

Thanks to the regularity of $\Omega$, 
the harmonic extension $\mathfrak{E}$ maps $C^1(\Gamma)$ to $W^{1,p}(\Omega)$ for some $p>n$.
Moreover, the maximum principle implies that 
\begin{equation}\label{eq:maxprin}
    \|\mathfrak{E}\varphi \|_{L^\infty(\Omega)} \leq \|\varphi\|_{L^\infty(\Gamma)}
    \quad \forall\, \varphi\in C^1(\Gamma).
\end{equation}

\begin{remark}
    The $C^1$-regularity of $\Omega$ and its boundary, respectively, is required for the trace theorem and the formula of 
    integration by parts for $\BD$-functions in \cite[Chap.~II, Theorem~2.1]{temam}, which will be used several times throughout the paper. 
    In \cite[Section~6]{francfort12}, it is claimed that this formula integration by parts also holds in Lipschitz domains, but no proof is provided.
    Since the minimal regularity of the boundary is not in the focus of this paper and would go beyond the scope of our work, 
    we restrict to domains of class $C^1$.
\end{remark}

\paragraph{Spaces}
Throughout the paper, by $L^p(\Omega; M)$ we denote Lebesgue spaces with values in $M$, 
where $p \in [1, \infty]$ and $M$ is a finite dimensional space.
To shorten notation, we abbreviate
\begin{equation*}
    \bL^p(\Omega) :=  L^p(\Omega; \R^n) \quad \text{and}
    \quad \Lt^p(\Omega) := L^p(\Omega;\Rnns).
\end{equation*}
Given $s\in \N$ and $p \in [1,\infty]$, the Sobolev spaces of vector- resp.\ tensor-valued functions are denoted by
\begin{equation*}
\begin{aligned}
    \bW^{s,p}(\Omega) &:= W^{s,p}(\Omega; \R^n), \quad & \bH^s(\Omega) &:= \bW^{s,2}(\Omega), \\
    \Wt^{s,p}(\Omega) &:= W^{s,p}(\Omega;\Rnns), \quad & \Ht^s(\Omega) &:= \Wt^{s,2}(\Omega).
\end{aligned}
\end{equation*}
Furthermore, set 
\begin{equation}\label{eq:sobolevdiri}    
    \bW_D^{1,p}(\Omega) := %W^{1,p}_D(\Omega;\R^n) := 
    \overline{\{\psi|_\Omega : \psi \in C^\infty_c(\R^n;\R^n),\; \supp(\psi) \cap \Gamma_D = \emptyset\}}^{\bW^{1,p}(\Omega)}
\end{equation}
and define $\bH^1_D(\Omega)$ analogously. The dual of $\bH^1_D(\Omega)$ is denoted by $\bH^{-1}_D(\Omega)$.
The space of \emph{bounded deformation} is abbreviated by 
\begin{equation*}
    \BD(\Omega) := \{ u \in \bL^1(\Omega) : 
    \tfrac{1}{2}(\partial_{i} u_j + \partial_{j} u_i) \in \Mf(\Omega) \;\forall\, i,j = 1, ..., n\},  
\end{equation*}
where $\Mf(\Omega)$ denotes the space of regular Borel measures on $\Omega$ and the (partial) derivatives are of course 
understood in a distributional sense. Equipped with the norm 
\begin{equation*}
    \|u\|_{\BD(\Omega)} := \|u\|_{\bL^1(\Omega)} + \sum_{i,j = 1}^n \tfrac{1}{2} \|\partial_{i} u_j + \partial_{j} u_i\|_{\Mf(\Omega)},
\end{equation*}
it becomes a Banach space.

\paragraph{Coefficients}
The elasticity tensor satisfies $\Cb \in \LL(\Rs)$ and is symmetric and coercive.
In addition we set $\mathbb{A} := \mathbb{C}^{-1}$ and note that $\Ab$ is symmetric and coercive, too.
Let us note that $\mathbb{C}$ could also depend on space, however, to
keep the discussion concise, we restrict ourselves to constant elasticity tensors.
 
\paragraph{Yield condition} 
The set defining the yield condition is denoted by $K \subset \Rnns$
and is closed and convex and there exists $0< \rho < R$ such that 
\begin{equation}\label{eq:Kinterior}
    \overline{B_{\R^{n\times n}}(0;\varrho)} \subset K \subset \overline{B_{\R^{n\times n}}(0;R)}.
\end{equation}
Given this set, we define the \emph{set of admissible stresses} as
\begin{equation*}
	\mathcal{K}(\Omega) := \{ \tau \in \Lt^2(\Omega) : \tau^D(x) \in K \text{ f.a.a. } x \in \Omega \}.
\end{equation*}
\begin{remark}
    The boundedness of the set $K$ is not really needed for our analysis. It is only required for the 
    formula of integration by parts in \eqref{eq:intbyparts}, which we only need to compare our 
    notion of solution to the one in \cite{dalMaso}. Nevertheless, we kept the boundedness assumption 
    on the set $K$, since it is fulfilled in all practically relevant examples such as e.g.\ the von Mises or 
    the Tresca yield condition.
\end{remark}
 
\paragraph{Operators}
Throughout the paper, $\nabla^s := \frac{1}{2}(\nabla + \nabla^\top): \bW^{1,p}(\Omega) \to \Lt^p(\Omega)$ 
denotes the linearized strain.
Its restriction to $\bW_D^{1,p}(\Omega)$ is denoted by the same symbol and, for the adjoint of this restriction, we write
$-\div := (\nabla^s)^* : \Lt^{p'}(\Omega) \to \bW^{1,p}_D(\Omega)^*$.

Let $\KK \subset \Lt^2(\Omega)$ be a closed and convex set. We denote the indicator function by
\begin{equation*}
I_{\KK} : \Lt^2(\Omega) \rightarrow \{ 0, \infty \}, \largespace \tau \mapsto 
\begin{cases}
	0, & \tau \in \KK, \\     
	\infty, & \tau \notin \KK.
\end{cases}                
\end{equation*}
By $\partial I_\KK : \Lt^2(\Omega) \rightarrow 2^{\Lt^2(\Omega)}$ we denote the subdifferential of the indicator function.
For $\lambda > 0$, the Yosida regularization is given by
\begin{equation}\label{eq:yosida}
	I_\lambda : \Lt^2(\Omega) \rightarrow \mathbb{R}, \largespace \tau \mapsto \frac{1}{2\lambda}
	\norm{\tau - \pi_\KK(\tau)}{\Lt^2(\Omega)}^2,
\end{equation}
where $\pi_\KK$ is the projection onto $\KK$ in $\Lt^2(\Omega)$, and its Fr\'echet derivative is
\begin{align*}
	\partial I_\lambda(\tau) = \frac{1}{\lambda} (\tau - \pi_\KK(\tau)).
\end{align*}
When $\lambda = 0$ we define $I_\lambda = I_0 := I_\KK$.
For a sequence $\sequence{\lambda}{n} \subset (0, \infty)$ we abbreviate $I_n := I_{\lambda_n}$.

\paragraph{Initial data}  For the initial stress field $\sigma_0$, we assume that 
$\sigma_0 \in \bW^{1,p}(\Omega)$ with some $p>n$.
Moreover, $\sigma_0$ satisfies the equilibrium condition, i.e., $\div \sigma_0 = 0$ a.e.\ in $\Omega$, 
and the yield condition, i.e., $\sigma_0 \in \KK(\Omega)$.
The initial displacement $u_0$ is supposed to be an element of $\bH^2(\Omega)$ and we require 
$\tr(\symnabla u_0 - \Ab \sigma_0) = 0$ a.e.\ in $\Omega$ in order to obtain a purely deviatoric initial 
plastic strain.

\begin{remark}
    The high regularity of $u_0$ is just needed to ensure that the feasible set of \eqref{eq:optprob} is nonempty. 
    For the mere discussion of the state system, this is not necessary.
    The same holds for the assumption $\sigma_0 \in \bW^{1,p}(\Omega)$, which will be needed to 
    construct a recovery sequence for the optimal control problem.
\end{remark}

\paragraph{Optimization Problem}
The Tikhonov parameter $\alpha$ is a positive constant and $\Psi$ is a functional that 
is bounded from below and satisfies a certain lower semicontinuity assumption w.r.t.\ weak convergence in the
displacement space, which will be made precise in \cref{sec:existence} below, see \eqref{eq:objlsc}.

%%%%%%%%%%%%%%%%%%%%%%%%%%%%%%%%%%%%%%%%%%%%%%%%%%%%%%%%%%%%%
\section{State Equation}\label{sec:stateeq}

We start our investigations with the analysis of the state system and recall some known results concerning 
quasi-static perfect plasticity. Already since the pioneering work of Suquet \cite{suquet},
it is well known that a precise definition of a solution to 
the system of perfect plasticity is all but straight forward, since a solution of the system in its ``natural'' form 
(below termed strong solution) does in general not exist due to a lack of regularity of the displacement and the 
plastic strain, respectively. 
We start with the definition of the function spaces already indicating this lack of regularity:

\begin{definition}[State spaces]
    \begin{enumerate}
        \item \emph{Stress space:}
        \begin{equation*}
            \Sigma(\Omega)  := \{ \tau  \in \Lt^2(\Omega) : \div \tau \in \bL^n(\Omega), \; \tau^D \in \Lt^\infty(\Omega)\}
        \end{equation*}         
        \item \emph{Displacement space:}
        \begin{equation*}
            \UU := \{u\in H^1(\bL^{\frac{n}{n-1}}(\Omega)) : \symnabla \boverdot u \in L^2_w(\Mf(\Omega;\Rnns))\}.
        \end{equation*}
        Herein, $L^2_w(\Mf(\Omega;\Rnns))$ is the space of weakly measurable functions with values in $\Mf(\Omega;\Rnns)$, 
        for which $t \mapsto \|\mu(t)\|_{\Mf}$ is an element of $L^2(0,T;\R)$. 
        For the definition of weak measurability, we refer to \cite[Section~8]{edwards}.
%        Recall that a mapping $\mu: [0,T] \to \Mf(\Omega;\Rnns)$ is called weakly measurable, if 
%        $t \mapsto  \dual{\tau}{\nabla^s \boverdot u(t)}_{C_0, \Mf}$ is Lebesgue-measurable for every 
%        $\tau\in C_0(\Omega) := \{\tau \in C(\bar\Omega;\Rnns) : \supp(\tau) \subset \Omega\}$.
%        Notice that weak measurability implies that $t \mapsto \|\mu(t)\|_{\Mf}$ is Lebesgue-measurable.
        
        We say that a sequence $\{u_n\} \subset \UU$ converges weakly in $\UU$ to $u$ and write $u_n \weak u$ in $\UU$, iff
        \begin{equation}\label{eq:defconvU}
           u_{n} \weak u \; \text{ in }H^1(\bL^{\frac{n}{n-1}}(\Omega)), \quad
            \symnabla \boverdot u_{n} \weak^* \symnabla \boverdot u \; \text{ in } L^2_w(\Mf(\Omega;\Rnns)).
        \end{equation}  
        Note that, by \cite[Theorem~8.20.3]{edwards}, $L^2_w(\Mf(\Omega;\Rnns)) = L^2(C_0(\Omega;\Rnns))^*$, 
        which gives a meaning to the weak-$\ast$ convergence in \eqref{eq:defconvU}.          
    \end{enumerate}
\end{definition}

\begin{remark}
    Unfortunately, $\BD(\Omega)$ does not admit the Radon-Nikod\'ym property and therefore weak measurability does not imply Bochner-measurability.
\end{remark}

\begin{definition}[Equilibrium condition]
	We define the set of stresses which fulfill the \wordDefinition{equilibrium condition} as
	\begin{align*}
		\mathcal{E}(\Omega) := \ker(\div) = \{ \tau \in \Lt^2(\Omega) : \scalarproduct{\tau}{\symnabla\varphi}{\Lt^2(\Omega)} = 0
		\;\; \forall\, \varphi \in \bH^1_D(\Omega) \}.
	\end{align*}
    Note that $\sigma \in \EE(\Omega)\cap \KK(\Omega)$ implies $\sigma \in \Sigma(\Omega)$.
\end{definition}

With the above definitions at hand, we can now define a hierarchy of three different solutions: 

\begin{definition}[Notions of solutions]\label{def:sol}
    Let $u_D\in H^1(\bH^1(\Omega))$ with $u_D(0) = u_0$ a.e.\ on $\Gamma_D$ be given. 
    Then we define the following notions of solutions to the perfect plasticity system:
    \begin{enumerate}
        \item \emph{Reduced solution:} A function $\sigma \in H^1(\Lt^2(\Omega))$ is called \emph{reduced solution} of the state equation, 
        if, for almost all $t\in (0,T)$, the following holds true:
        \vspace*{-4mm}
        \begin{subequations}
        \begin{gather}
           \intertext{$\bullet$ Equilibrium and yield condition:}
               \sigma(t) \in \EE(\Omega) \cap \KK(\Omega),
           \intertext{$\bullet$ Reduced flow rule inequality:}
               \int_\Omega \big(\mathbb{A} \boverdot{\sigma}(t) - \symnabla \boverdot{u}_D(t)\big) : \big(\tau - \sigma(t)\big) \,\d x \geq 0
        		\quad \forall \,\tau \in \mathcal{E}(\Omega) \cap \mathcal{K}(\Omega),\label{eq:flowrulered}
           \intertext{$\bullet$ Initial condition:}
        		\sigma(0) = \sigma_0.
        \end{gather}
        \end{subequations}
        \item \emph{Weak solution:} A tuple  $(u, \sigma) \in \UU \times H^1(\Lt^2(\Omega))$ is called \emph{weak solution} of the state equation, 
        if, for almost all $t\in (0,T)$, there holds
         \vspace*{-4mm}
        \begin{subequations}
        \begin{gather}
           \intertext{$\bullet$ Equilibrium and yield condition:}
               \sigma(t) \in \EE(\Omega) \cap \KK(\Omega), \label{eq:momyield}
           \intertext{$\bullet$ Weak flow rule inequality:}
           \begin{aligned}
               \int_\Omega \mathbb{A} \boverdot{\sigma}(t) : \big(\tau - \sigma(t)\big)\,\d x 
               + \int_\Omega \boverdot u(t)\cdot \div \big(\tau - \sigma(t)\big)\, \d x   \qquad\qquad  &\\
               \geq \int_\Omega \symnabla \boverdot u_D(t) : \big(\tau - \sigma(t)\big) 
               +  \boverdot u_D(t)\cdot \div \big(\tau - \sigma(t)\big)  \, \d x \qquad &\\[-1ex]
        		\forall \,\tau \in \Sigma(\Omega) \cap \mathcal{K}(\Omega), &
           \end{aligned}\label{eq:flowruleweak}
           \intertext{$\bullet$ Initial condition:}
        		u(0) = u_0, \quad \sigma(0) = \sigma_0. \label{eq:initial}
        \end{gather}
        \end{subequations}
       \item \emph{Strong solution:}
       A tuple $(u, \sigma) \in H^1(\bH^1(\Omega)) \times H^1(\Lt^2(\Omega))$ is called \emph{strong solution} of the state equation, 
       if, for almost all $t\in (0,T)$, there holds
         \vspace*{-4mm}
        \begin{subequations}
        \begin{gather}
           \intertext{$\bullet$ Equilibrium and yield condition:}
               \sigma(t) \in \EE(\Omega) \cap \KK(\Omega),
           \intertext{$\bullet$ Strong flow rule inequality:}
           \begin{aligned}
               \int_\Omega \mathbb{A} \boverdot{\sigma}(t) : \big(\tau - \sigma(t)\big)\,\d x 
               + \int_\Omega \symnabla \boverdot u(t) : \big(\tau - \sigma(t)\big) \, \d x  \geq 0 \qquad & \\[-1ex]
        		\forall \,\tau \in \mathcal{K}(\Omega), &
           \end{aligned}\label{eq:flowrulestrong}
           \intertext{$\bullet$ Dirichlet boundary condition:}
               u(t) - u_D(t) \in \bH^1_D(\Omega) \label{eq:diri}
           \intertext{$\bullet$ Initial condition:}
        		u(0) = u_0, \quad \sigma(0) = \sigma_0.
        \end{gather}
        \end{subequations}
    \end{enumerate}
\end{definition}

Some words concerning this definition are in order.
First, let us shortly investigate the relationship between the three different solution concepts. 
By restricting the test functions in \eqref{eq:flowruleweak} to functions in $\EE(\Omega)$, one immediately 
observes that every weak solution is also a reduced solution. 
Moreover, by integration by parts, 
it is evident that \eqref{eq:flowrulestrong} and \eqref{eq:diri} imply \eqref{eq:flowruleweak}. 
On the other hand, if a weak solution satisfies $u \in H^1(\bH^1(\Omega))$ and the Dirichlet boundary conditions in \eqref{eq:diri}, 
then integration by parts yields \eqref{eq:flowrulestrong}, provided that $\Sigma(\Omega)\cap \mathcal{K}(\Omega)$ 
is dense in $\KK(\Omega)$, which is a direct consequence of \cref{lem:density} proven in the appendix.
Thus, we have the following relations between the three different solution concepts:

\begin{corollary}[Relations between the solution concepts]\label{cor:solutions}
    \begin{enumerate}
        \item If $(u, \sigma)$ is a weak solution, then $\sigma$ is automatically a reduced solution.
        \item A weak solution $(u, \sigma)$ is a strong solution, 
        if and only if $u \in H^1(\bH^1(\Omega))$ and $(u - u_D)(t) \in \bH^1_D(\Omega)$ for all $t\in [0,T]$.
    \end{enumerate}
\end{corollary}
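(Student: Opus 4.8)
The plan is to verify both equivalences directly from \cref{def:sol}, using integration by parts as the bridge between the weak and strong flow rule inequalities. The first item is already essentially observed in the text: restricting the test functions $\tau$ in \eqref{eq:flowruleweak} to the subspace $\EE(\Omega) \cap \KK(\Omega)$ kills the divergence terms $\int_\Omega \boverdot u(t)\cdot \div(\tau - \sigma(t))\,\d x$ and $\int_\Omega \boverdot u_D(t)\cdot \div(\tau-\sigma(t))\,\d x$, since both $\tau$ and $\sigma(t)$ lie in $\EE(\Omega) = \ker(\div)$. What remains is exactly \eqref{eq:flowrulered}, and the equilibrium/yield condition and the initial condition $\sigma(0)=\sigma_0$ carry over verbatim; the required regularity $\sigma \in H^1(\Lt^2(\Omega))$ is part of the weak solution. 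Hence $\sigma$ is a reduced solution, which is the first claim.

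For the second item, one direction is immediate: if $(u,\sigma)$ is a strong solution, then by definition $u \in H^1(\bH^1(\Omega))$ and $(u-u_D)(t) \in \bH^1_D(\Omega)$ for almost all $t$; continuity of $u$ and $u_D$ in the relevant spaces upgrades this to all $t \in [0,T]$, and the text already notes that \eqref{eq:flowrulestrong} together with \eqref{eq:diri} implies \eqref{eq:flowruleweak}, so $(u,\sigma)$ is in particular a weak solution. For the converse, suppose $(u,\sigma)$ is a weak solution with $u \in H^1(\bH^1(\Omega))$ and $(u-u_D)(t) \in \bH^1_D(\Omega)$. Fix $t$ and a test stress $\tau \in \Sigma(\Omega)\cap\KK(\Omega)$. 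Since $\boverdot u(t) - \boverdot u_D(t) \in \bH^1_D(\Omega)$ and $\div(\tau-\sigma(t)) \in \bL^n(\Omega)$ (both $\tau$ and $\sigma(t)$ lie in $\Sigma(\Omega)$), the integration-by-parts formula for the pairing of $\bH^1_D$ and $\Sigma(\Omega)$ gives
\begin{equation*}
  \int_\Omega \symnabla(\boverdot u(t) - \boverdot u_D(t)) : (\tau - \sigma(t))\,\d x
  = - \int_\Omega (\boverdot u(t) - \boverdot u_D(t)) \cdot \div(\tau - \sigma(t))\,\d x .
\end{equation*}
Substituting this identity into \eqref{eq:flowruleweak} cancels the divergence terms and the $\symnabla\boverdot u_D$ term, leaving precisely
\begin{equation*}
  \int_\Omega \mathbb{A}\boverdot\sigma(t):(\tau-\sigma(t))\,\d x + \int_\Omega \symnabla\boverdot u(t):(\tau-\sigma(t))\,\d x \geq 0
  \qquad \forall\,\tau \in \Sigma(\Omega)\cap\KK(\Omega).
\end{equation*}
This is the strong flow rule inequality \eqref{eq:flowrulestrong}, but only tested against $\tau \in \Sigma(\Omega)\cap\KK(\Omega)$ rather than all $\tau\in\KK(\Omega)$; to close the gap I would invoke the density of $\Sigma(\Omega)\cap\KK(\Omega)$ in $\KK(\Omega)$ (a consequence of \cref{lem:density}), noting that the left-hand side, viewed as $\int_\Omega (\mathbb{A}\boverdot\sigma(t) + \symnabla\boverdot u(t)):(\tau - \sigma(t))\,\d x$, is continuous in $\tau$ with respect to the $\Lt^2(\Omega)$-topology since $\mathbb{A}\boverdot\sigma(t) + \symnabla\boverdot u(t) \in \Lt^2(\Omega)$. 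Together with the equilibrium/yield and initial conditions (which are already present), and \eqref{eq:diri} (which holds by assumption), this shows $(u,\sigma)$ is a strong solution.

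The only genuinely delicate point is making the integration-by-parts step rigorous: one needs the duality pairing between $\Sigma(\Omega)$ and $\bH^1_D(\Omega)$ to be well defined and to satisfy the stated Green's formula with no boundary contribution on $\Gamma_D$, which is where the $C^1$-regularity of $\Omega$ and the trace/integration-by-parts machinery for $\BD$-functions from \cite{temam} enter — the same tools the paper has already committed to using. Everything else is bookkeeping: matching the regularity classes in \cref{def:sol}, handling the "for almost all $t$" versus "for all $t$" distinction via continuity of $u$ and $u_D$ in their respective Bochner spaces, and the density argument. I expect the write-up to be short, essentially a careful transcription of the remarks preceding the corollary.
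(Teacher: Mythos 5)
Your argument is correct and takes essentially the same route as the paper, whose proof is exactly the paragraph preceding the corollary: item~1 by restricting the test functions in \eqref{eq:flowruleweak} to $\EE(\Omega)\cap\KK(\Omega)$ (so that the divergence terms drop out), and item~2 by integration by parts together with the density of $\Sigma(\Omega)\cap\KK(\Omega)$ in $\KK(\Omega)$ from \cref{lem:density}; note that no $\BD$-machinery is needed for this step, since $\boverdot u(t)-\boverdot u_D(t)\in\bH^1_D(\Omega)$ and the identity you use is immediate from the definition of $\div$ as the adjoint of $\symnabla$ on $\bH^1_D(\Omega)$, with no boundary contribution. One bookkeeping caveat: with the integration-by-parts identity exactly as you state it, substituting into \eqref{eq:flowruleweak} yields $\int_\Omega\big(\Ab\boverdot\sigma(t)-\symnabla\boverdot u(t)\big):\big(\tau-\sigma(t)\big)\,\d x\ge 0$, i.e.\ the strain-rate term comes out with a minus sign, not the plus sign in your display; this minus-sign version is the one consistent with the inclusion $\boverdot z\in\partial I_{\KK(\Omega)}(\sigma)$ and with the reduced and weak flow rules, so the mismatch stems from the sign as printed in \eqref{eq:flowrulestrong} rather than from your method, but you should not assert that your computation ``leaves precisely'' the plus-sign inequality.
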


One may further ask why no Dirichlet boundary conditions appear in the definition of a weak solution. 
In fact, from a mechanical point of view, it is reasonable that no boundary conditions are imposed, 
since plastic slips may well develop on the Dirichlet part on the boundary, too, 
in form of tangential jumps of the displacement perpendicular to the outward normal $\nu$.
This observation is implicitly contained in the above definition as demonstrated in \cite[Theorem~6.1]{dalMaso}.
For convenience of the reader, we shortly sketch the underlying arguments.
To this end, suppose that a weak solution is given and let us define 
the \emph{plastic strain} $z \in L^2_w(\Mf(\Omega \cup \Gamma_D;\Rnns))$ by
\begin{equation}\label{eq:defp}
    z\lfloor_\Omega := \symnabla(u) - \Ab \sigma \, \d x,\quad 
    z\lfloor_{\Gamma_D} := (u - u_D) \odot \nu \HH^{n-1}, 
\end{equation}
where $\odot$ refers to the symmetrized dyadic product, i.e., $a\odot b = 1/2(a_i b_j + a_j b_i)_{i,j =1}^n$ 
for $a,b\in \R^n$.
Note that functions in $\BD(\Omega)$ admit traces in $L^1(\partial\Omega;\R^n)$ (see e.g.~\cite[Chap.~II, Thm.~2.1]{temam}) 
so that $z\lfloor_{\Gamma_D}$ is well defined.
According to \cite[Lemma~5.5]{dalMaso}, these equations carry over to the time derivatives for almost all $t\in (0,T)$, i.e.,
\begin{equation}\label{eq:pdot}
    \boverdot z\lfloor_\Omega := \nabla^s(\boverdot u) - \Ab \boverdot \sigma \,\d x,\quad 
    \boverdot z\lfloor_{\Gamma_D} := (\boverdot u - \boverdot u_D) \odot \nu \HH^{n-1}.
\end{equation}
Let us prove that the trace of $p$ vanishes. For this purpose, we need the following formula of integration by parts:

\begin{lemma}[{\cite[Chap.~II, Thm.~2.1]{temam}}]\label{lem:temam}
    For every $v\in \BD(\Omega)$ and every $\varphi\in W^{1,p}(\Omega)$, $p>n$, there holds
    \begin{equation}\label{eq:intbypartstemam}
        \int_\Omega \tfrac{1}{2}(v_i \partial_j \varphi + v_j \partial_i \varphi) \,\d x + \int_\Omega \varphi\, \d(\symnabla v)_{ij} 
        = \int_{\partial\Omega} \varphi \,\tfrac{1}{2}(v_i \nu_j + v_j \nu_i)\,\d s
    \end{equation}
    for  all $i,j = 1, ..., n$.
\end{lemma}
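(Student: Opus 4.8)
This is a classical result; the plan is to reduce it to the Gauss--Green formula for smooth vector fields by a density argument, the only delicate ingredient being a trace-preserving approximation of $\BD$-functions by smooth ones. First I would record that, since $p>n$, Sobolev embedding gives $\varphi\in C(\overline\Omega)$, and that $\BD(\Omega)\embed\bL^{n/(n-1)}(\Omega)$. For $v\in C^1(\overline\Omega;\R^n)$ the identity \eqref{eq:intbypartstemam} is just componentwise integration by parts: the classical divergence theorem gives
\begin{equation*}
    \int_\Omega v_i\,\partial_j\varphi\,\d x + \int_\Omega \varphi\,\partial_j v_i\,\d x = \int_{\partial\Omega}\varphi\, v_i\,\nu_j\,\d s
    \qquad \forall\, i,j,
\end{equation*}
and symmetrizing in $(i,j)$ together with $(\symnabla v)_{ij} = \tfrac12(\partial_i v_j+\partial_j v_i)\,\d x$ yields the claim in the smooth case.

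Next I would invoke the Temam--Strang approximation theorem: thanks to the $C^1$-regularity of $\partial\Omega$, for every $v\in\BD(\Omega)$ there is a sequence $\{v_k\}\subset C^\infty(\overline\Omega;\R^n)$ with
\begin{equation*}
    v_k\to v \text{ in } \bL^1(\Omega),\quad |\symnabla v_k|(\Omega)\to|\symnabla v|(\Omega),\quad v_k|_{\partial\Omega}\to v|_{\partial\Omega} \text{ in } L^1(\partial\Omega;\R^n),
\end{equation*}
the second condition meaning that $\symnabla v_k\,\d x\weak^*\symnabla v$ in $\Mf(\overline\Omega;\Rnns)$ together with convergence of the total variations on $\Omega$. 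I would build $\{v_k\}$ from a finite atlas of boundary charts flattening $\partial\Omega$, a subordinate partition of unity, an inward translation of the flattened pieces followed by mollification at a matched scale, treating the interior by ordinary mollification; the $C^1$-regularity is what makes the charts and normals behave well enough for the traces to converge. This approximation step is the main obstacle of the proof --- everything else is bookkeeping --- and it is precisely the point for which the paper restricts to $C^1$-domains.

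Finally I would write \eqref{eq:intbypartstemam} for each $v_k$ and pass to the limit $k\to\infty$. Since $p'<\tfrac{n}{n-1}$, interpolation between $v_k\to v$ in $\bL^1(\Omega)$ and the bound $\sup_k\|v_k\|_{\bL^{n/(n-1)}(\Omega)}<\infty$ (from $\BD(\Omega)\embed\bL^{n/(n-1)}(\Omega)$ and the convergence of $\|v_k\|_{\bL^1(\Omega)}$ and $|\symnabla v_k|(\Omega)$) gives $v_k\to v$ in $\bL^{p'}(\Omega)$, so the first integral converges because $\nabla\varphi\in L^p(\Omega)$. The boundary integral converges since $v_k|_{\partial\Omega}\to v|_{\partial\Omega}$ in $L^1(\partial\Omega;\R^n)$ and $\varphi|_{\partial\Omega}\in C(\partial\Omega)\subset L^\infty(\partial\Omega)$. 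For the middle integral, the strict convergence guarantees that no variation mass escapes to $\partial\Omega$, so testing the weak-$*$ convergent measures $\symnabla v_k\,\d x$ against $\varphi\in C(\overline\Omega)$ is legitimate and $\int_\Omega\varphi\,\d(\symnabla v_k)_{ij}\to\int_\Omega\varphi\,\d(\symnabla v)_{ij}$. Collecting the three limits gives \eqref{eq:intbypartstemam} for arbitrary $v\in\BD(\Omega)$.
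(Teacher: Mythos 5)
Your argument is correct in substance, but it takes a genuinely different route from the paper. The paper does not reprove Temam's theorem: it cites it exactly as stated in \cite[Chap.~II, Thm.~2.1]{temam}, i.e.\ for arbitrary $v\in\BD(\Omega)$ but test functions $\varphi\in C^1(\bar\Omega)$, and the remark following the lemma extends it to $\varphi\in W^{1,p}(\Omega)$, $p>n$, by approximating the \emph{test function}: pick $\varphi_m\in C^1(\bar\Omega)$ with $\varphi_m\to\varphi$ in $W^{1,p}(\Omega)$; then $\nabla\varphi_m\to\nabla\varphi$ in $L^p$ pairs with the fixed $v\in\bL^{n/(n-1)}(\Omega)\subset\bL^{p'}(\Omega)$, while $\varphi_m\to\varphi$ uniformly on $\bar\Omega$ pairs with the fixed finite measure $\symnabla v$ and, via the $\BD$-trace theorem, with the fixed trace $v|_{\partial\Omega}\in L^1(\partial\Omega;\R^n)$. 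Every limit passage is immediate because $v$, $\symnabla v$ and $v|_{\partial\Omega}$ never move. You instead keep $\varphi$ fixed and approximate the $\BD$-function $v$ by smooth fields in the strict topology with converging traces, which amounts to reproving the cited theorem itself: the trace-preserving strict approximation (charts, inward translation, mollification) and the Reshetnyak-type ``no mass escapes to the boundary'' argument for the middle integral are precisely the machinery behind Temam's proof, and they are the genuinely nontrivial, only sketched, parts of your proposal; note also that your smooth-case base step already uses Gauss--Green with a $W^{1,p}$ factor, i.e.\ a small density argument of the paper's kind. So your route is self-contained and sound, at the price of substantially heavier $\BD$ machinery, whereas the paper's route is a short density argument in $\varphi$ once the $C^1(\bar\Omega)$ version is taken as a black box; both rely on the $C^1$ regularity of the boundary.
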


\begin{remark}
    The result in \cite{temam} is only stated for test functions in $C^1(\bar\Omega)$. However, the embeddings 
    $\BD(\Omega)\embed \bL^{\frac{n}{n-1}}(\Omega)$ and $W^{1,p}(\Omega) \embed C(\bar\Omega)$, $p>n$, 
    along with the trace theorem for $\BD$-functions and the density of $C^1(\bar\Omega)$ in $W^{1,p}(\Omega)$ 
    imply that the integration by parts also holds for test functions in $W^{1,p}(\Omega)$.
\end{remark}

Now, let $\varphi\in C^\infty_c(\Omega)$ be arbitrary. 
Then, since $\KK(\Omega)$ just acts on the deviatoric part, 
$\varphi\, \delta_{ij} + \sigma_{ij}(t) \in \Sigma(\Omega) \cap \KK(\Omega)$ for 
all $t\in [0,T]$ and therefore \eqref{eq:flowruleweak} and the above formula of integration by parts give
\begin{equation*}
    \sum_i \Big(\int_\Omega (\mathbb{A} \boverdot{\sigma})_{ii} \,\varphi \,\d x 
    + \int_\Omega \varphi\, \d (\symnabla\boverdot u)_{ii}\Big) = 0 \quad \forall\, \varphi \in C^\infty_c(\Omega)
\end{equation*}
and therefore $\tr\boverdot z\lfloor_\Omega = 0$ f.a.a.~$t\in (0,T)$.
Since $\tr (\symnabla(u_0) - \Ab \sigma_0) = 0$,
\cite[Theorem~7.1]{dalMaso} yields $\tr z\lfloor_\Omega = 0$ for all $t\in [0,T]$.
Similarly, we choose an arbitrary test function $\psi \in C^\infty(\Gamma)$ with $\supp(\psi) \subset \Gamma_D$ and 
test \eqref{eq:flowruleweak}  with $\mathfrak{E}\psi\, \delta_{ij} + \sigma_{ij}(t) \in \Sigma(\Omega) \cap \KK(\Omega)$. 
Note that $\mathfrak{E}\psi\, \delta_{ij} \in \Sigma(\Omega)$,  since the harmonic extension maps into $W^{1,p}(\Omega)$ 
with $p>n$.
Applying then again the formula of integration by parts implies, in view of 
$\tr\boverdot z\lfloor_\Omega = 0$, that 
\begin{equation}\label{eq:normaltrace}
    (\boverdot u - \boverdot u_D)\cdot \nu = 0\quad \text{a.e.~on } \Gamma_D.
\end{equation}
As $u_0 = u_D(0)$ a.e.\ on $\Gamma_D$, this yields $(u - u_D)\cdot \nu = 0$ a.e.~on $\Gamma_D$, giving in turn 
$\tr z\lfloor_{\Gamma_D} = 0$ for all $t\in [0,T]$.
Now that we know that $z$ is deviatoric, the formula of integration by parts from \cite[Proposition~2.2]{dalMaso}
is applicable, which yields
\begin{equation}\label{eq:intbyparts}
    \dual{\tau^D}{\boverdot z(t)} + \int_\Omega \tau : \big(\Ab \boverdot\sigma(t) - \symnabla(\boverdot u_D(t))\big)\,\d x 
    = \int_\Omega \div \tau \cdot (\boverdot u(t) - \boverdot u_D(t))\,\d x
\end{equation}
for almost all $t\in (0,T)$ and all $\tau \in \Sigma(\Omega)$.
It is to be noted that the duality product $ \dual{\tau^D}{\boverdot z}$ has to be treated with care, since, in general,
$\tau^D \notin C(\bar\Omega;\Rnns)$, but $\boverdot z$ is only a measure. For a detailed and rigorous discussion of this issue, 
we refer to \cite[Section~2.3]{dalMaso}.
Inserting \eqref{eq:intbyparts} in the flow rule inequality \eqref{eq:flowruleweak} then results in
\begin{equation}\label{eq:maxplastwork}
    \dual{\tau^D - \sigma^D(t)}{\boverdot z(t)} \geq 0
    \quad \forall\, \tau \in \Sigma(\Omega) \cap \KK(\Omega), 
\end{equation}
which is just the maximum plastic work inequality illustrating that $z$ as defined in \eqref{eq:defp} is indeed 
the correct object for the plastic strain.
As a byproduct, we obtain the second equation in \eqref{eq:defp} as boundary condition on $\Gamma_D$
indicating that the Dirichlet boundary condition in \eqref{eq:diri} as part of the definition of a strong solution 
is in general too restrictive as already mentioned above. 
Accordingly, a strong solution does in general not exist, while we have the following result for a weak solution: 

\begin{proposition}[Existence of weak solutions, {\cite[R\'esultat~2]{suquet}}]
    For all $u_D \in H^1(\bH^1(\Omega))$, there exists a weak solution in the sense of \cref{def:sol}.
\end{proposition}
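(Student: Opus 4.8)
The plan is to build a weak solution in two stages that mirror the hierarchy of \cref{def:sol}: first produce a \emph{reduced solution} $\sigma$, and then recover a displacement $u$ realizing the full weak flow rule \eqref{eq:flowruleweak}. One preliminary remark makes the second stage accessible: since the body force and the Neumann traction in \eqref{eq:optprobstrong} both vanish, the \emph{safe-load condition} holds trivially --- the zero stress, or more generously any spatially constant deviatoric stress admitted by \eqref{eq:Kinterior}, serves as a safe load --- so the constraint qualification needed below comes for free.

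\emph{Stage 1: the stress.} Equip $\Lt^2(\Omega)$ with the scalar product $(\tau_1,\tau_2)\mapsto\scalarproduct{\Ab\tau_1}{\tau_2}{\Lt^2(\Omega)}$, which induces an equivalent norm since $\Ab$ is symmetric and coercive, and write $H$ for the resulting Hilbert space. The set $\CC:=\EE(\Omega)\cap\KK(\Omega)$ is convex, closed and nonempty in $H$ (it contains $\sigma_0$). Using $\int_\Omega\symnabla\boverdot u_D(t):(\tau-\sigma(t))\,\d x=\scalarproduct{\Cb\symnabla\boverdot u_D(t)}{\tau-\sigma(t)}{H}$, the reduced flow rule \eqref{eq:flowrulered} becomes the evolution variational inequality
\begin{equation*}
    \sigma(t)\in\CC,\qquad \scalarproduct{\boverdot\sigma(t)-\Cb\symnabla\boverdot u_D(t)}{\tau-\sigma(t)}{H}\geq 0\quad\forall\,\tau\in\CC,\qquad\sigma(0)=\sigma_0,
\end{equation*}
equivalently the subgradient inclusion $\boverdot\sigma(t)+\partial I_{\CC}(\sigma(t))\ni\Cb\symnabla\boverdot u_D(t)$ in $H$. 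Since $u_D\in H^1(\bH^1(\Omega))$ gives $\Cb\symnabla\boverdot u_D\in L^2(\Lt^2(\Omega))=L^2(0,T;H)$ and $\sigma_0\in\CC=D(I_{\CC})$, the classical theory of evolution equations governed by a subgradient (à la Brézis; equivalently a perturbed sweeping process) yields a unique $\sigma\in H^1(0,T;H)=H^1(\Lt^2(\Omega))$ solving the inclusion; in particular $\sigma(t)\in\EE(\Omega)\cap\KK(\Omega)\subset\Sigma(\Omega)$ for all $t$ and \eqref{eq:flowrulered} holds a.e. An equivalent route is implicit time discretization of this inclusion with uniform a priori bounds on the piecewise-affine interpolants and passage to the limit.

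\emph{Stage 2: the displacement.} With $\sigma$ fixed, set $f(t):=\symnabla\boverdot u_D(t)-\Ab\boverdot\sigma(t)\in\Lt^2(\Omega)$; Stage~1 says $\int_\Omega f(t):(\tau-\sigma(t))\,\d x\leq 0$ for all $\tau\in\EE(\Omega)\cap\KK(\Omega)$, and rearranging \eqref{eq:flowruleweak} while using $\div\sigma(t)=0$ shows that what is needed is, for a.a.\ $t$, a field with $\boverdot u(t)\in\bL^{\frac{n}{n-1}}(\Omega)$, $\symnabla\boverdot u(t)\in\Mf(\Omega;\Rnns)$ and
\begin{equation*}
    \int_\Omega\big(\boverdot u(t)-\boverdot u_D(t)\big)\cdot\div\tau\,\d x\ \geq\ \int_\Omega f(t):\big(\tau-\sigma(t)\big)\,\d x\qquad\forall\,\tau\in\Sigma(\Omega)\cap\KK(\Omega).
\end{equation*}
For fixed $t$ this is the optimality system of a convex minimization over $\BD(\Omega)$ whose integrand is built from the support function of $K$ (the dissipation potential) plus terms linear in the displacement involving $\sigma(t)$ and $\boverdot u_D(t)$, together with a boundary term accounting for possible plastic slip on $\Gamma_D$; its dual is precisely the stress problem of Stage~1, and the (here trivial) safe-load condition guarantees strong duality and a finite infimum. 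Coercivity of this functional on $\BD(\Omega)$ --- provided by the lower bound $\varrho\,|\cdot|$ on the support function (from the inner ball in \eqref{eq:Kinterior}) together with a Korn-type inequality in $\BD(\Omega)$, cf.\ \cite{temam} --- and weak-$\ast$ lower semicontinuity furnish a minimizer $\boverdot u(t)$ obeying $\|\boverdot u(t)\|_{\bL^{n/(n-1)}(\Omega)}+\|\symnabla\boverdot u(t)\|_{\Mf}\leq c\,(1+\|f(t)\|_{\Lt^2(\Omega)})$ with $c$ independent of $t$. A measurable-selection argument turns $t\mapsto\boverdot u(t)$ into a weakly measurable map with the $L^2$-in-time integrability built into $\UU$; integrating from $u(0)=u_0$ yields $u\in\UU$ satisfying \eqref{eq:initial}, and \eqref{eq:flowruleweak} follows from the pointwise optimality inequality tested with $\tau\in\Sigma(\Omega)\cap\KK(\Omega)$, the individual terms being meaningful through the integration-by-parts formulas of \cref{lem:temam} and \cite[Proposition~2.2]{dalMaso} and, if the inequality is first obtained only on a dense subset, extended by density of $\Sigma(\Omega)\cap\KK(\Omega)$ in $\KK(\Omega)$ (\cref{lem:density}).

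\emph{Expected main obstacle.} Stage~1 is routine. The real difficulty is Stage~2: the displacement is genuinely of low regularity --- only $\BD$ in space --- and its measurability and square-integrability in time must be secured without the conveniences of Bochner theory, since $\BD(\Omega)$ lacks the Radon--Nikod\'ym property (as noted after \cref{def:sol}). The crux is to carry out the pointwise-in-time duality with constants uniform in $t$, so as to land in $L^2_w(\Mf(\Omega;\Rnns))$; this is exactly where the geometry of the yield set \eqref{eq:Kinterior} and the triviality of the safe-load condition become indispensable.
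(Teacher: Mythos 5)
Your Stage~1 is sound, and your reduction of Stage~2 to the existence, for a.a.\ $t$, of a field $v(t)$ of $\BD$-regularity satisfying $\int_\Omega (v(t)-\boverdot u_D(t))\cdot\div\tau\,\d x \geq \int_\Omega f(t):(\tau-\sigma(t))\,\d x$ for all $\tau\in\Sigma(\Omega)\cap\KK(\Omega)$ is the correct reformulation of \eqref{eq:flowruleweak}. But that multiplier-existence statement \emph{is} the whole difficulty of the theorem, and your argument for it is asserted rather than proved. ``Its dual is precisely the stress problem of Stage~1 and strong duality holds'' hides several substantive steps: one must write down the correctly \emph{relaxed} velocity functional (support function of $K$ evaluated on the measure $\symnabla v - \Ab\boverdot\sigma(t)\,\d x$, a boundary term penalizing slip $(v-\boverdot u_D)\odot\nu$ on $\Gamma_D$, and the trace-free constraint on the plastic rate, which forces $\div v\in L^2(\Omega)$ rather than mere measure regularity); prove its weak-$\ast$ lower semicontinuity and coercivity on $\BD(\Omega)$ (there is no Korn inequality controlling the full gradient in $\BD$, and with the relaxed Dirichlet condition coercivity must be extracted from the boundary term, in the spirit of the construction in \cref{lem:ubound}); prove attainment and a zero duality gap; and, crucially, show that the optimal dual variable of this per-time problem coincides with the pair $(\sigma(t),\boverdot\sigma(t))$ produced in Stage~1, so that the extremality relation yields \eqref{eq:flowruleweak} with \emph{that} stress. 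None of this is routine -- it is essentially the Temam/Kohn--Temam relaxation-and-duality theory adapted to the rate problem -- and the appeal to the trivially satisfied safe-load condition does not supply it. Likewise, the measurable-selection step is genuinely open in your sketch: the per-time minimizers are non-unique (\cref{ex:1d}), $\BD(\Omega)$ lacks the Radon--Nikod\'ym property, and you must produce a selection that is weakly measurable with values in $\Mf(\Omega;\Rnns)$ and square integrable in time, i.e.\ lands in $L^2_w(\Mf(\Omega;\Rnns))$; saying ``a measurable-selection argument'' does not close this.

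For comparison, the paper does not attempt a pointwise-in-time construction at all: it invokes Suquet's R\'esultat~2, in which the pair $(\sigma, v)$ is obtained by Yosida (visco-plastic) regularization of the \emph{evolution} problem, so that the velocity arises as a weak-$\ast$ limit of the regularized velocities and its time measurability and $L^2_w(\BD)$ bound come built in; the remaining work is only to observe $v\in L^2(\bL^{\frac{n}{n-1}}(\Omega))$ by the embedding $\BD(\Omega)\embed\bL^{\frac{n}{n-1}}(\Omega)$, integrate in time to define $u$, and integrate by parts to pass from \eqref{eq:suquetsys} to \eqref{eq:flowruleweak}. Your route could conceivably be completed, but as written the central existence-and-duality step and the selection step are gaps, not details.
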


\begin{proof}
   Using the Yosida regularization,  Suquet showed in \cite{suquet} the existence of a functions 
   $\sigma \in H^1(\Lt^2(\Omega))$ and $v\in L^2_w(\BD(\Omega))$ so that, for almost all $t\in (0,T)$,
    \begin{equation}\label{eq:suquetsys}
    \begin{aligned}
        -\div \sigma(t) &\in \EE(\Omega) \cap \KK(\Omega),\\
        \int_\Omega \mathbb{A} \boverdot{\sigma}(t) : & \,\big(\tau - \sigma(t)\big)\,\d x 
        + \int_\Omega v(t)\cdot \div \big(\tau - \sigma(t)\big)\, \d x  \\
        &\geq \dual{\boverdot u_D(t)}{(\tau - \sigma(t))\nu}_{H^{1/2}(\Gamma_D), H^{-1/2}(\Gamma_D)}
        \quad \forall \,\tau \in \Sigma(\Omega) \cap \mathcal{K}(\Omega), \\
        \sigma(0) &= \sigma_0. 
    \end{aligned}              
    \end{equation}
    Due to the continuous embedding $\BD(\Omega) \embed \bL^{\frac{n}{n-1}}(\Omega)$ 
    (see e.g.~\cite[Chap.~II, Theorem~2.2]{temam}) and the Radon-Nikodym property of $\bL^{\frac{n}{n-1}}(\Omega)$, we have that 
    $v\in L^2(\bL^{\frac{n}{n-1}}(\Omega))$. Therefore, 
    \begin{equation*}
        u(t) := u_0 + \int_0^t v(r)\,\d r
    \end{equation*}
    is an element of $H^1(\bL^{\frac{n}{n-1}}(\Omega))$ and satisfies the initial condition in \eqref{eq:initial}.
    Inserting this in \eqref{eq:suquetsys} and integrating the right hand side by parts gives the desired flow 
    rule inequality \eqref{eq:flowruleweak}. The claimed regularity of $u$ directly follows from the regularity of $v = \boverdot u$.
\end{proof}

\begin{remark}[Other equivalent notions of solutions]
    Beside the reformulation of the flow rule in terms of the maximum plastic work inequality \eqref{eq:maxplastwork}, 
    there are other solutions concepts, which are equivalent to the definition of a weak solution, 
    such as the notion of a \emph{quasi-static evolution}, which in essence corresponds to a global energetic solution 
    in the sense of \cite{MielkeRoubicek2015}. 
    For an overview over the various notions of solutions and a rigorous proof of their equivalence, we refer to 
    \cite[Section~6]{dalMaso}. A slightly sloppy, but very illustrating derivation of the flow rule 
    out of the quasi-static evolution can also be found in \cite{francfort16}.    
\end{remark}

Unfortunately, the weak solution is not unique, as the following example shows:

\begin{example}[{\cite[Section~2.1]{suquet}}]\label{ex:1d}
    We choose $\Omega = (0, 1)$, $\Gamma_D = \partial\Omega = \{ 0,1 \}$, $T = 1$, 
    $K = [-1, 1]$, $\mathbb{C} = 1$, $(\sigma_0, u_0) = 0$, and $u_D(t,x) := 2tx$.
One easily verifies that the stress does only depend on the time with
$\sigma(t) = 2t$ for $t \in (0, \frac{1}{2})$ and $\sigma(t) = 1$ for $t \in (\frac{1}{2},1)$.
For the displacement one obtains $u(t,x) = 2tx$ for $(t,x) \in (0,\frac{1}{2}) \times (0,1)$
so that it is unique for $t \in (0,\frac{1}{2})$. For $t \in (\frac{1}{2}, 0)$ there are more than one solution, for example
\begin{align*}
	u(t,x) &= 2tx, \text{\, if \,} (t,x) \in (\tfrac{1}{2},1) \times (0, 1), \\
	u(t,x) &=
	\left\{
	\begin{array}{l l}
	\frac{2tx}{\beta} + x - \frac{x}{\beta}, &\text{\, if \,} (t,x) \in (\frac{1}{2},1) \times [0, \beta],  \\[0.5ex] 
	2t + x - 1,  &\text{\, if \,} (t,x) \in (\frac{1}{2},1) \times [\beta, 1],
	\end{array}
	\right.\\
	u(t,x) &=
	\left\{
	\begin{array}{l l}
	x, &\text{\, if \,} (t,x) \in (\frac{1}{2},1) \times [0, \beta],  \\  [0.5ex] 
	\alpha t + x - \frac{\alpha}{2},  &\text{\, if \,} (t,x) \in (\frac{1}{2},1) \times [\beta, 1],
	\end{array}
	\right.
\end{align*}
where $\alpha\in [0,2]$ and $\beta\in [0,1]$ can be freely chosen.
Note that the last solution just provides the minimal regularity, i.e., $\partial_x \boverdot u(t) \in \Mf(\Omega)$.
\end{example}

The uniqueness of the stress field observed in the above example is a general result:

\begin{lemma}[Uniqueness of the stress, {\cite[Theorem~1]{johnson}, \cite[Lemma~3.5]{meywal}}]
\label{lem:stressunique}
	Assume that $\sigma_1, \sigma_2 \in H^1(\Lt^2(\Omega))$ are	two reduced solutions.	Then $\sigma_1 = \sigma_2$.
\end{lemma}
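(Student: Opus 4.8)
The statement is a uniqueness result for reduced solutions, so the natural strategy is to test the reduced flow rule inequality \eqref{eq:flowrulered} for $\sigma_1$ with $\tau = \sigma_2(t)$ and the one for $\sigma_2$ with $\tau = \sigma_1(t)$, both of which are admissible since $\sigma_i(t) \in \EE(\Omega) \cap \KK(\Omega)$ for almost all $t$. This is the classical monotonicity trick for evolution variational inequalities. Adding the two inequalities, the terms involving $\symnabla \boverdot u_D(t)$ cancel because the same Dirichlet datum enters both, and one is left with
\begin{equation*}
    \int_\Omega \mathbb{A}\big(\boverdot\sigma_1(t) - \boverdot\sigma_2(t)\big) : \big(\sigma_2(t) - \sigma_1(t)\big)\,\d x \geq 0
    \quad\text{f.a.a.\ } t\in(0,T),
\end{equation*}
i.e., setting $\delta := \sigma_1 - \sigma_2 \in H^1(\Lt^2(\Omega))$, we get $\int_\Omega \mathbb{A}\,\boverdot\delta(t) : \delta(t)\,\d x \le 0$ for almost all $t$.

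Next I would recognize the left-hand side as a time derivative. Since $\Ab$ is symmetric and coercive, the map $t \mapsto \frac12\int_\Omega \mathbb{A}\,\delta(t):\delta(t)\,\d x =: \frac12\norm{\delta(t)}{\Lt^2(\Omega)_\Ab}^2$ is absolutely continuous on $[0,T]$ (as $\delta \in H^1(\Lt^2(\Omega))$, using the standard chain rule for the square of the norm in a Hilbert space applied to the equivalent inner product induced by $\Ab$), with derivative $\int_\Omega \mathbb{A}\,\boverdot\delta(t):\delta(t)\,\d x$ for almost all $t$. Hence this absolutely continuous function has nonpositive derivative almost everywhere, so it is nonincreasing on $[0,T]$. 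At $t=0$ both reduced solutions satisfy the initial condition $\sigma_i(0) = \sigma_0$, so $\delta(0) = 0$ and the function vanishes at $t=0$. Being nonincreasing and starting at zero, it is nonpositive; being a (squared) norm, it is nonnegative; therefore it is identically zero, and coercivity of $\Ab$ forces $\delta(t) = 0$ in $\Lt^2(\Omega)$ for all $t\in[0,T]$, i.e.\ $\sigma_1 = \sigma_2$.

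\textbf{Main obstacle.} The algebraic heart of the argument is entirely routine; the only point requiring a little care is the justification that $t\mapsto \frac12\norm{\delta(t)}{\Lt^2(\Omega)_\Ab}^2$ is absolutely continuous with the expected derivative. This is a standard fact for $H^1$ Bochner functions into a Hilbert space (one may pass through the equivalent scalar product $\scalarproduct{\Ab\,\cdot\,}{\cdot}{\Lt^2(\Omega)}$ introduced in \cref{sec:2}), so it is not a genuine difficulty, merely the one place where one should cite or invoke the relevant chain rule rather than compute. I would also briefly remark that the admissibility of the cross test functions is exactly guaranteed by the equilibrium-and-yield membership $\sigma_i(t)\in\EE(\Omega)\cap\KK(\Omega)$, which is why the argument lives at the level of reduced solutions and does not see the displacement at all.
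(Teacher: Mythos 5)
Your argument is correct and is precisely the classical monotonicity argument (testing each flow rule with the other stress, cancelling the $\symnabla\boverdot u_D$ terms, and applying the chain rule for the $\Ab$-weighted norm together with $\delta(0)=0$) that underlies the cited proofs in Johnson and in the companion paper, to which this lemma is deferred. No gaps; the admissibility of the cross test functions and the absolute continuity of $t\mapsto\tfrac12\norm{\delta(t)}{\Lt^2(\Omega)_\Ab}^2$ are justified exactly as you indicate.
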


\begin{remark}[Optimal control vs.\ optimization]
    Since the displacement field as part of a weak solution is not unique in general, there is no 
    (single-valued) control-to-state operator mapping $u_D$ to $u$.
    Therefore, one might argue that \eqref{eq:optprobstrong} is actually no real optimal control problem. 
    Strictly speaking, one should rather regard it as an \emph{optimization problem} with the triple $(u, \sigma, u_D)$ 
    as optimization variables.
\end{remark}

%%%%%%%%%%%%%%%%%%%%%%%%%%%%%%%%%%%%%%%%%%%%%%%%%%%%%%%%
\section{Existence of Optimal Solutions}\label{sec:existence}

Before we come to the main point of our analysis, which concerns the approximation of \eqref{eq:optprobstrong} 
by means of regularized optimal control problems, let us address the existence of optimal solutions.
The proof in principle follows the classical direct method, for which we need the following boundedness and 
continuity results:

\begin{lemma}[{\cite[Lemma~3.6]{meywal}}]\label{lem:sigmabound}
    Let $u_D\in H^1(\bH^1(\Omega))$ be given and $\sigma$ be the associated reduced solution. Then there holds
    \begin{equation}\label{eq:sigmadotbound}
        \|\boverdot\sigma\|_{L^2(\Lt^2(\Omega))} \leq \gamma_{\Ab}^{-1}\,\|u_D\|_{H^1(\bH^1(\Omega))},
    \end{equation}        
    where $\gamma_{\Ab}$ is the coercivity constant of $\Ab$.
    Consequently, there is a constant $C>0$ such that 
    $\|\sigma\|_{H^1(\Lt^2(\Omega))} \leq C (\|\sigma_0\|_{\Lt^2(\Omega)} + \|u_D\|_{H^1(\bH^1(\Omega))})$.
\end{lemma}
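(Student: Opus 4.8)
The plan is to derive the bound on $\|\boverdot\sigma\|_{L^2(\Lt^2(\Omega))}$ directly from the reduced flow rule inequality \eqref{eq:flowrulered} by an appropriate choice of test function, and then obtain the bound on the full $H^1$-norm via the initial condition and integration in time. The key observation is that, for almost every $t$, the admissible stress $\sigma(t)$ itself lies in $\EE(\Omega)\cap\KK(\Omega)$, so it is a legitimate object around which to test.

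First I would fix a time $t$ at which \eqref{eq:flowrulered} holds and both $\boverdot\sigma(t)$ and $\boverdot u_D(t)$ exist (this is a.e.\ $t$). The natural candidate competitor would be something like $\tau=\sigma(t)-\lambda\boverdot\sigma(t)$, but this need not lie in $\KK(\Omega)$. Instead I would use a variational/projection argument: since the reduced solution is characterized by the flow rule, one knows (this is the standard sweeping-process structure behind \cref{lem:stressunique}) that $\Ab\boverdot\sigma(t)-\symnabla\boverdot u_D(t)$ is, up to sign, in the normal cone of $\EE(\Omega)\cap\KK(\Omega)$ at $\sigma(t)$, and hence $\Ab\boverdot\sigma(t)$ is the projection (in the $\Ab$-inner product, i.e.\ in $\Lt^2(\Omega)_\Ab$) of $\symnabla\boverdot u_D(t)$ onto the tangent-type cone. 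The cleanest route: test \eqref{eq:flowrulered} with a fixed $\tau_0\in\EE(\Omega)\cap\KK(\Omega)$ — for instance $\tau_0=0$, which is admissible since $0\in K$ by \eqref{eq:Kinterior} and trivially $0\in\EE(\Omega)$. This gives
\begin{equation*}
  \int_\Omega \big(\Ab\boverdot\sigma(t)-\symnabla\boverdot u_D(t)\big):\big(-\sigma(t)\big)\,\d x \geq 0 .
\end{equation*}
That alone is not enough; one also wants to exploit that $\sigma(t)+s\,\boverdot\sigma(t)\in\EE(\Omega)\cap\KK(\Omega)$ for the right $s$. Since $t\mapsto\sigma(t)$ is $H^1$ in time with values in $\EE(\Omega)\cap\KK(\Omega)$ (a convex set), the difference quotient $\tau_h:=\sigma(t+h)\in\EE(\Omega)\cap\KK(\Omega)$ is admissible; plugging $\tau=\sigma(t+h)$ into \eqref{eq:flowrulered}, dividing by $h>0$ and letting $h\downarrow0$ yields
\begin{equation*}
  \int_\Omega \big(\Ab\boverdot\sigma(t)-\symnabla\boverdot u_D(t)\big):\boverdot\sigma(t)\,\d x \geq 0 ,
\end{equation*}
for a.e.\ $t$ (using that $\sigma$ is differentiable a.e.\ and the flow rule holds along a sequence $h\to0$). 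This is the crucial inequality.

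From here the estimate is immediate: rearranging,
\begin{equation*}
  \int_\Omega \Ab\boverdot\sigma(t):\boverdot\sigma(t)\,\d x
  \leq \int_\Omega \symnabla\boverdot u_D(t):\boverdot\sigma(t)\,\d x
  \leq \|\symnabla\boverdot u_D(t)\|_{\Lt^2(\Omega)}\,\|\boverdot\sigma(t)\|_{\Lt^2(\Omega)},
\end{equation*}
and by coercivity of $\Ab$ the left side is at least $\gamma_\Ab\|\boverdot\sigma(t)\|_{\Lt^2(\Omega)}^2$, so dividing gives
\begin{equation*}
  \|\boverdot\sigma(t)\|_{\Lt^2(\Omega)} \leq \gamma_\Ab^{-1}\,\|\symnabla\boverdot u_D(t)\|_{\Lt^2(\Omega)} \leq \gamma_\Ab^{-1}\,\|\boverdot u_D(t)\|_{\bH^1(\Omega)}.
\end{equation*}
Squaring and integrating over $(0,T)$ yields \eqref{eq:sigmadotbound} (possibly with $\|u_D\|_{H^1(\bH^1(\Omega))}$ on the right, as stated). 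For the second claim, one writes $\sigma(t)=\sigma_0+\int_0^t\boverdot\sigma(r)\,\d r$ using the initial condition $\sigma(0)=\sigma_0$, estimates $\|\sigma(t)\|_{\Lt^2(\Omega)}\leq\|\sigma_0\|_{\Lt^2(\Omega)}+T^{1/2}\|\boverdot\sigma\|_{L^2(\Lt^2(\Omega))}$ by Cauchy–Schwarz in time, combines with \eqref{eq:sigmadotbound}, and collects terms into a single constant $C=C(T,\gamma_\Ab)$ to get $\|\sigma\|_{H^1(\Lt^2(\Omega))}\leq C(\|\sigma_0\|_{\Lt^2(\Omega)}+\|u_D\|_{H^1(\bH^1(\Omega))})$.

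The main obstacle is the passage to the limit $h\downarrow0$ in the difference-quotient test function: one must justify that $\tfrac1h\int_\Omega(\Ab\boverdot\sigma(t)-\symnabla\boverdot u_D(t)):(\sigma(t+h)-\sigma(t))\,\d x \to \int_\Omega(\Ab\boverdot\sigma(t)-\symnabla\boverdot u_D(t)):\boverdot\sigma(t)\,\d x$ for a.e.\ $t$, which follows from the $H^1(\Lt^2(\Omega))$-regularity of $\sigma$ (Lebesgue points of $\boverdot\sigma$ in the Bochner sense) but requires a little care about the set of full measure on which everything holds simultaneously. An alternative that sidesteps difference quotients entirely is to recall that \eqref{eq:flowrulered} is equivalent to the statement $\Ab\boverdot\sigma(t)-\symnabla\boverdot u_D(t)\in -N_{\EE(\Omega)\cap\KK(\Omega)}(\sigma(t))$ and then simply note that $-\boverdot\sigma(t)$ belongs to the tangent cone $T_{\EE(\Omega)\cap\KK(\Omega)}(\sigma(t))$ (closure of feasible directions, again by $H^1$-in-time regularity and convexity), so the normal-cone inclusion pairs nonnegatively with it — this is really the same computation packaged abstractly, and either presentation completes the proof.
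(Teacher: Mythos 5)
There is a genuine (though easily repairable) flaw in the key step: the sign of your ``crucial inequality'' points the wrong way, and the subsequent ``rearranging'' silently flips it. Testing \eqref{eq:flowrulered} with $\tau=\sigma(t+h)$, dividing by $h>0$ and letting $h\downarrow 0$ gives
\begin{equation*}
  \int_\Omega \big(\Ab\boverdot\sigma(t)-\symnabla\boverdot u_D(t)\big):\boverdot\sigma(t)\,\d x \;\geq\; 0,
  \qquad\text{i.e.}\qquad
  \int_\Omega \Ab\boverdot\sigma(t):\boverdot\sigma(t)\,\d x \;\geq\; \int_\Omega \symnabla\boverdot u_D(t):\boverdot\sigma(t)\,\d x ,
\end{equation*}
which is useless for bounding $\|\boverdot\sigma(t)\|_{\Lt^2(\Omega)}$ from above; it is \emph{not} equivalent to the inequality with ``$\leq$'' that you then write down. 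What you need is the reverse inequality, and it comes from the \emph{backward} quotient: insert $\tau=\sigma(t-h)\in\EE(\Omega)\cap\KK(\Omega)$, divide by $h>0$ and let $h\downarrow 0$, so that $(\sigma(t-h)-\sigma(t))/h\to-\boverdot\sigma(t)$ and hence
\begin{equation*}
  \int_\Omega \big(\Ab\boverdot\sigma(t)-\symnabla\boverdot u_D(t)\big):\boverdot\sigma(t)\,\d x \;\leq\; 0 .
\end{equation*}
Only then does coercivity of $\Ab$ give $\gamma_\Ab\|\boverdot\sigma(t)\|_{\Lt^2(\Omega)}^2\leq\|\symnabla\boverdot u_D(t)\|_{\Lt^2(\Omega)}\|\boverdot\sigma(t)\|_{\Lt^2(\Omega)}$ and the stated bound. (Using both quotients in fact yields equality, the standard Brezis-type identity for such evolution variational inequalities.) Note that your abstract reformulation at the end is consistent with the \emph{correct} sign, because there you pair the normal-cone element with $-\boverdot\sigma(t)\in T_{\EE(\Omega)\cap\KK(\Omega)}(\sigma(t))$, which corresponds precisely to the backward quotient — so it is not ``the same computation'' as your forward-quotient derivation, and the two versions of your argument contradict each other as written.

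Apart from this sign issue the structure is sound: the admissibility of $\sigma(t\pm h)$ as test functions, the a.e.\ convergence of the difference quotients from $\sigma\in H^1(\Lt^2(\Omega))$, the passage from the pointwise-in-time bound to \eqref{eq:sigmadotbound} by integration, and the $H^1(\Lt^2(\Omega))$-estimate via $\sigma(t)=\sigma_0+\int_0^t\boverdot\sigma(r)\,\d r$ are all fine. The paper itself does not reproduce a proof (it quotes Lemma~3.6 of the companion paper \cite{meywal}), but your route — exploiting that the trajectory stays in the convex set $\EE(\Omega)\cap\KK(\Omega)$ and pairing the flow rule with $\pm\boverdot\sigma(t)$ — is the standard argument for such estimates; with the backward quotient (or the tangent/normal-cone pairing done consistently) it completes the proof.
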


\begin{lemma}[Continuity of reduced solutions, {\cite[Proposition~3.10]{meywal}}]\label{lem:contred}
   	Let $\{ u_{D,n} \} \subset H^1(\bH^1(\Omega))$ be a sequence such that
	\begin{equation}\label{eq:convdiri}
	\begin{gathered}
		u_{D,n} \rightharpoonup u_D \;\text{ in } H^1(\bH^1(\Omega)), \quad
		u_{D,n} \rightarrow u_D \;\text{ in } L^2(\bH^1(\Omega)), \\
		u_{D,n}(T) \rightarrow u_D(T) \;\text{ in } \bH^1(\Omega)		
	\end{gathered}
	\end{equation}
    and denote the (unique) reduced solution associated with $u_{D,n}$ by $\sigma_n$.
    Then $\sigma_n \rightharpoonup \sigma$ in $H^1(\Lt^2(\Omega))$, where $\sigma$ is the reduced solution w.r.t.~$u_D$.
\end{lemma}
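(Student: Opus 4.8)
The plan is to combine the a priori bound of \cref{lem:sigmabound} with a compactness argument and the uniqueness result of \cref{lem:stressunique}. Since $\{u_{D,n}\}$ converges weakly in $H^1(\bH^1(\Omega))$, it is bounded there, so by \cref{lem:sigmabound} the sequence $\{\sigma_n\}$ is bounded in $H^1(\Lt^2(\Omega))$. Hence, passing to a subsequence (not relabelled), $\sigma_n \weak \tilde\sigma$ in $H^1(\Lt^2(\Omega))$ for some $\tilde\sigma$. Because the point evaluation $v\mapsto v(t)$ is linear and continuous on $H^1(\Lt^2(\Omega))$, it is weakly continuous, so in addition $\sigma_n(t)\weak\tilde\sigma(t)$ in $\Lt^2(\Omega)$ for every $t\in[0,T]$. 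It then suffices to show that $\tilde\sigma$ is a reduced solution associated with $u_D$: indeed, \cref{lem:stressunique} forces $\tilde\sigma=\sigma$, and since every subsequence of $\{\sigma_n\}$ has a further subsequence with this same weak limit, the full sequence converges weakly to $\sigma$ in $H^1(\Lt^2(\Omega))$.

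To check that $\tilde\sigma$ is a reduced solution, the equilibrium and yield condition is the easy part: $\EE(\Omega)\cap\KK(\Omega)$ is convex and closed in $\Lt^2(\Omega)$, hence weakly closed, and for almost all $t$ we have $\sigma_n(t)\in\EE(\Omega)\cap\KK(\Omega)$ for every $n$ (the exceptional set being the countable union of the individual null sets), so $\tilde\sigma(t)\in\EE(\Omega)\cap\KK(\Omega)$ for almost all $t$. The initial condition is immediate from $\sigma_n(0)=\sigma_0\weak\tilde\sigma(0)$, whence $\tilde\sigma(0)=\sigma_0$. For the flow rule I would first integrate \eqref{eq:flowrulered} in time: for every $\tau\in L^2(0,T;\Lt^2(\Omega))$ with $\tau(t)\in\EE(\Omega)\cap\KK(\Omega)$ for almost all $t$, there holds $\int_0^T\!\int_\Omega(\Ab\boverdot\sigma_n-\symnabla\boverdot u_{D,n}):(\tau-\sigma_n)\,\d x\,\d t\ge 0$.

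Now pass to the limit term by term. The terms linear in the test direction, $\int_0^T\!\int_\Omega\Ab\boverdot\sigma_n:\tau\,\d x\,\d t$ and $\int_0^T\!\int_\Omega\symnabla\boverdot u_{D,n}:\tau\,\d x\,\d t$, converge by the weak convergences $\boverdot\sigma_n\weak\boverdot{\tilde\sigma}$ and $\symnabla\boverdot u_{D,n}\weak\symnabla\boverdot u_D$ in $L^2(\Lt^2(\Omega))$. For the mixed term $\int_0^T\!\int_\Omega\symnabla\boverdot u_{D,n}:\sigma_n\,\d x\,\d t$ I would integrate by parts in time: the terminal boundary term $\int_\Omega\symnabla u_{D,n}(T):\sigma_n(T)\,\d x$ converges because $u_{D,n}(T)\to u_D(T)$ strongly in $\bH^1(\Omega)$ while $\sigma_n(T)\weak\tilde\sigma(T)$ in $\Lt^2(\Omega)$; the term at $t=0$ equals $\int_\Omega\symnabla u_{D,n}(0):\sigma_0\,\d x$ and converges since $u_{D,n}(0)\weak u_D(0)$ in $\bH^1(\Omega)$ against the fixed $\sigma_0$; and $-\int_0^T\!\int_\Omega\symnabla u_{D,n}:\boverdot\sigma_n\,\d x\,\d t$ converges because $u_{D,n}\to u_D$ strongly in $L^2(\bH^1(\Omega))$ and $\boverdot\sigma_n\weak\boverdot{\tilde\sigma}$ in $L^2(\Lt^2(\Omega))$; reversing the integration by parts identifies the limit as $\int_0^T\!\int_\Omega\symnabla\boverdot u_D:\tilde\sigma\,\d x\,\d t$ (using $\tilde\sigma(0)=\sigma_0$). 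The only genuinely nonlinear term is $-\int_0^T\!\int_\Omega\Ab\boverdot\sigma_n:\sigma_n\,\d x\,\d t$; by symmetry of $\Ab$ it equals $-\tfrac12\int_\Omega\Ab\sigma_n(T):\sigma_n(T)\,\d x+\tfrac12\int_\Omega\Ab\sigma_0:\sigma_0\,\d x$, and since $\eta\mapsto\int_\Omega\Ab\eta:\eta\,\d x$ is continuous and convex, hence weakly lower semicontinuous, we get $\limsup_n\big(-\tfrac12\int_\Omega\Ab\sigma_n(T):\sigma_n(T)\,\d x\big)\le-\tfrac12\int_\Omega\Ab\tilde\sigma(T):\tilde\sigma(T)\,\d x$. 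Collecting everything (and rewriting $\int_0^T\!\int_\Omega\Ab\boverdot{\tilde\sigma}:\tilde\sigma\,\d x\,\d t$ as an endpoint term via $\tilde\sigma(0)=\sigma_0$) yields $\int_0^T\!\int_\Omega(\Ab\boverdot{\tilde\sigma}-\symnabla\boverdot u_D):(\tau-\tilde\sigma)\,\d x\,\d t\ge 0$ for all admissible $\tau$. A localisation argument then recovers \eqref{eq:flowrulered} for $\tilde\sigma$: for fixed $\eta\in\EE(\Omega)\cap\KK(\Omega)$ and measurable $E\subset(0,T)$, insert $\tau=\eta$ on $E$ and $\tau=\tilde\sigma$ elsewhere to obtain $\int_\Omega(\Ab\boverdot{\tilde\sigma}(t)-\symnabla\boverdot u_D(t)):(\eta-\tilde\sigma(t))\,\d x\ge 0$ for almost all $t$, and separability of $\EE(\Omega)\cap\KK(\Omega)$ together with continuity of this expression in $\eta$ produces a common null set valid for all $\eta$.

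I expect the main obstacle to be exactly the limit passage in the flow rule, and more precisely the two "product of two weakly convergent sequences" terms, namely the quadratic term $\Ab\boverdot\sigma_n:\sigma_n$ and the mixed term $\symnabla\boverdot u_{D,n}:\sigma_n$. These are handled only by the endpoint trick plus weak lower semicontinuity in the first case and by integrating by parts in time in the second; the latter crucially exploits the extra hypotheses in \eqref{eq:convdiri} --- strong convergence in $L^2(\bH^1(\Omega))$ and strong convergence of the terminal values --- which is precisely why these are assumed. A secondary technical point is the rigorous passage from the time-integrated inequality back to the pointwise-in-$t$ flow rule, which requires the measurable-selection/separability argument indicated above rather than a naive Lebesgue-point argument.
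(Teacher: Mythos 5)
Your proposal is correct and follows essentially the route the paper takes: the paper itself only cites the companion work for this lemma, but the very same machinery appears in its proof of \cref{prop:contweak} --- a priori bound from \cref{lem:sigmabound}, weak compactness in $H^1(\Lt^2(\Omega))$, weak closedness of $\EE(\Omega)\cap\KK(\Omega)$, the endpoint trick with weak lower semicontinuity of the $\Ab$-weighted norm for the quadratic term, and the time integration by parts exploiting the strong convergences in \eqref{eq:convdiri} for the mixed term (this is exactly the role of the cited Lemma~3.9 of the companion paper), finished off by uniqueness of the reduced solution and the subsequence--subsequence argument. Your localisation step back to the pointwise-in-$t$ flow rule via measurable sets and separability is a sound way to make rigorous what the paper leaves implicit.
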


\begin{lemma}\label{lem:ubound}
    There is a constant $C>0$, independent of $u_D$, such that every weak solution 
    w.r.t.~$u_D$ fulfills 
    \begin{equation*}
        \Big(\int_0^T\|\boverdot u(t)\|^2_{\BD(\Omega)}\,\d t\Big)^{1/2} \leq C\,\|u_D\|_{H^1(\bH^1(\Omega))}  \big(1 + \|u_D\|_{H^1(\bH^1(\Omega))}\big).
    \end{equation*}
\end{lemma}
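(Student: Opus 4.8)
Let $(u,\sigma)$ be a weak solution with respect to $u_D$, and let $z$, resp.\ $\boverdot z$, denote the associated plastic strain measures from \eqref{eq:defp}, resp.\ \eqref{eq:pdot}; recall that $\boverdot z(t)\in\Mf(\Omega\cup\Gamma_D;\Rnns)$ and $\boverdot u(t)\in\BD(\Omega)$ for almost all $t$ by the definition of $\UU$, and that $\norm{\boverdot u(t)}{\BD(\Omega)}=\norm{\boverdot u(t)}{\bL^1(\Omega)}+\norm{\symnabla\boverdot u(t)}{\Mf(\Omega)}$. The plan is to estimate both summands pointwise in time by a constant times $(1+\norm{\sigma(t)}{\Lt^2(\Omega)})\,(\norm{\boverdot\sigma(t)}{\Lt^2(\Omega)}+\norm{\boverdot u_D(t)}{\bH^1(\Omega)})$, and then to integrate over $(0,T)$ using \cref{lem:sigmabound}. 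For the first, key step, I would observe that $\sigma(t)\in\EE(\Omega)\cap\KK(\Omega)\subset\Sigma(\Omega)$ with $\div\sigma(t)=0$, so inserting $\tau=\sigma(t)$ into the integration-by-parts identity \eqref{eq:intbyparts} gives $\dual{\sigma^D(t)}{\boverdot z(t)}=-\int_\Omega\sigma(t):(\Ab\boverdot\sigma(t)-\symnabla\boverdot u_D(t))\,\d x$, hence a bound on $|\dual{\sigma^D(t)}{\boverdot z(t)}|$ by $C\,\norm{\sigma(t)}{\Lt^2(\Omega)}(\norm{\boverdot\sigma(t)}{\Lt^2(\Omega)}+\norm{\boverdot u_D(t)}{\bH^1(\Omega)})$; combining this with the coercivity estimate $\varrho\,\norm{\boverdot z(t)}{\Mf(\Omega\cup\Gamma_D)}\le\dual{\sigma^D(t)}{\boverdot z(t)}$ — which follows from the maximum plastic work inequality \eqref{eq:maxplastwork} and $\overline{B_{\R^{n\times n}}(0;\varrho)}\subset K$, \eqref{eq:Kinterior}, cf.\ \cite[Section~2.3]{dalMaso} — controls $\norm{\boverdot z(t)}{\Mf(\Omega\cup\Gamma_D)}$. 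Since $\symnabla\boverdot u(t)=\boverdot z(t)\lfloor_\Omega+\Ab\boverdot\sigma(t)\,\d x$ on $\Omega$ by \eqref{eq:pdot}, this already yields the desired bound for $\norm{\symnabla\boverdot u(t)}{\Mf(\Omega)}$.

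It remains to bound $\norm{\boverdot u(t)}{\bL^1(\Omega)}$, and here I would use \eqref{eq:intbyparts} a second time, as a duality characterization of $\boverdot u(t)$: fix $p>n$, and for $g\in\bL^p(\Omega)$ construct a test stress $\tau_g\in\Sigma(\Omega)\cap C(\bar\Omega;\Rnns)$ with $\div\tau_g=g$ and $\norm{\tau_g}{C(\bar\Omega;\Rnns)}+\norm{\tau_g}{\Lt^2(\Omega)}\le C\norm{g}{\bL^p(\Omega)}$ — for instance by extending $g$ to a ball $B\supset\overline\Omega$, solving the linear elasticity system $\div(\symnabla\xi)=g$ on $B$ with homogeneous Dirichlet data, exploiting $W^{2,p}$-regularity on the smooth domain $B$ together with $W^{1,p}(B;\Rnns)\embed C(\bar B;\Rnns)$, and setting $\tau_g:=\symnabla\xi|_\Omega$. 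Inserting $\tau=\tau_g$ into \eqref{eq:intbyparts}, using that $\dual{\tau_g^D}{\boverdot z(t)}=\int_{\Omega\cup\Gamma_D}\tau_g^D:\d\boverdot z(t)$ because $\tau_g^D$ is continuous, and absorbing $\int_\Omega g\cdot\boverdot u_D(t)\,\d x$ via $\bH^1(\Omega)\embed\bL^{p/(p-1)}(\Omega)$, one arrives at $\int_\Omega g\cdot\boverdot u(t)\,\d x\le C\norm{g}{\bL^p(\Omega)}\,(\norm{\boverdot z(t)}{\Mf(\Omega\cup\Gamma_D)}+\norm{\boverdot\sigma(t)}{\Lt^2(\Omega)}+\norm{\boverdot u_D(t)}{\bH^1(\Omega)})$. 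Taking the supremum over $\norm{g}{\bL^p(\Omega)}\le1$ bounds $\norm{\boverdot u(t)}{\bL^{p/(p-1)}(\Omega)}$, and $\bL^{p/(p-1)}(\Omega)\embed\bL^1(\Omega)$ finishes this step.

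Putting the two steps together gives the announced pointwise bound $\norm{\boverdot u(t)}{\BD(\Omega)}\le C\,(1+\norm{\sigma(t)}{\Lt^2(\Omega)})\,(\norm{\boverdot\sigma(t)}{\Lt^2(\Omega)}+\norm{\boverdot u_D(t)}{\bH^1(\Omega)})$. Since $\sigma\in H^1(\Lt^2(\Omega))\embed C(\Lt^2(\Omega))$ and $\sigma(0)=\sigma_0$ is fixed, \cref{lem:sigmabound} implies $\sup_{t\in[0,T]}\norm{\sigma(t)}{\Lt^2(\Omega)}\le C\,(1+\norm{u_D}{H^1(\bH^1(\Omega))})$; moreover \cref{lem:sigmabound} also provides $\norm{\boverdot\sigma}{L^2(\Lt^2(\Omega))}\le\gamma_{\Ab}^{-1}\norm{u_D}{H^1(\bH^1(\Omega))}$, and clearly $\norm{\boverdot u_D}{L^2(\bH^1(\Omega))}\le\norm{u_D}{H^1(\bH^1(\Omega))}$. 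Squaring the pointwise bound, integrating over $(0,T)$, pulling the time-constant factor $\sup_t\norm{\sigma(t)}{\Lt^2(\Omega)}$ in front of the integral and taking the square root then yields the claim, with $C$ depending only on $\Omega$, $\Cb$, $K$, $T$, and $\sigma_0$.

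I expect the only genuinely delicate point to be the first step, namely making the duality product $\dual{\cdot}{\boverdot z(t)}$ and the identity \eqref{eq:intbyparts} rigorous in the spirit of \cite{dalMaso} and, above all, extracting the coercivity $\varrho\,\norm{\boverdot z(t)}{\Mf(\Omega\cup\Gamma_D)}\le\dual{\sigma^D(t)}{\boverdot z(t)}$ out of the geometric condition \eqref{eq:Kinterior} on the yield set; the construction of the continuous test stress $\tau_g$ and the final time integration are routine.
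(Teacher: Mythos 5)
Your plan is correct in substance, but it takes a genuinely different route from the paper's proof. You run everything through the Dal Maso et al.\ duality machinery: the work identity from \eqref{eq:intbyparts} with $\tau=\sigma(t)$ (using $\div\sigma=0$) plus the dissipation coercivity coming from $\overline{B_{\R^{n\times n}}(0;\varrho)}\subset K$ to bound $\|\boverdot z(t)\|_{\Mf(\Omega\cup\Gamma_D)}$, and then a divergence-lifting duality argument (test stresses $\tau_g\in\Wt^{1,p}(\Omega)\subset\Sigma(\Omega)$ with $\div\tau_g=g$) to bound $\|\boverdot u(t)\|_{\bL^{p'}(\Omega)}$. The paper instead never leaves the weak flow rule \eqref{eq:flowruleweak}: compactly supported interior test stresses of size $\varrho$ give the bound on $\|\symnabla\boverdot u\|_{\Mf(\Omega;\Rnns)}$ componentwise, harmonic extensions of mollified sign fields supported in a set $\Lambda\subset\Gamma_D$ with $\dist(\Lambda,\partial\Gamma_D)>0$ (using \eqref{eq:maxprin} and \eqref{eq:normaltrace}) control the boundary trace $\|\boverdot u\|_{\bL^1(\Lambda)}$, and the full $\BD$-norm is then recovered from the Poincar\'e-type inequality \cite[Chap.~II, Prop.~2.4]{temam}. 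Your route yields a slightly stronger pointwise-in-time conclusion ($\boverdot u(t)\in\bL^{p'}(\Omega)$ with a quantitative bound) and does not need the nonempty relative interior of $\Gamma_D$ explicitly, but it leans on exactly the ingredients the paper only sketches informally with reference to \cite{dalMaso}: the rigorous definition of $\dual{\tau^D}{\boverdot z(t)}$ for general $\tau\in\Sigma(\Omega)$, the validity of \eqref{eq:intbyparts} for all such $\tau$, its identification with the measure integral when $\tau^D$ is continuous, and the approximation argument behind the coercivity $\varrho\,\|\boverdot z(t)\|_{\Mf(\Omega\cup\Gamma_D)}\le|\dual{\sigma^D(t)}{\boverdot z(t)}|$; all of this must be imported from \cite[Section~2.3]{dalMaso}. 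One small bookkeeping point: with \eqref{eq:maxplastwork} exactly as printed, choosing $\tau^D$ close to $-\varrho$ times the polar of $\boverdot z(t)$ gives $\varrho\,\|\boverdot z(t)\|_{\Mf}\le-\dual{\sigma^D(t)}{\boverdot z(t)}$ rather than the sign you wrote; since you estimate the work term in absolute value via \eqref{eq:intbyparts}, this is harmless. The paper's constructions, by contrast, need only \cref{lem:temam} and elementary arguments, which is presumably why it avoids the duality pairing altogether.
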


\begin{proof}
    Let $\varphi \in C^\infty_c(\Omega)$ with $\|\varphi\|_{L^\infty(\Omega)} \leq 1$ and $i,j \in \{1, ..., n\}$ be arbitrary.
    According to \eqref{eq:Kinterior}, the test function 
    \begin{equation*}
        (\tau_\varphi)_{ij}  = (\tau_\varphi)_{ji} := - \frac{\varrho}{\sqrt{2}}\,\varphi, \quad 
        (\tau_\varphi)_{kl} = 0 \quad \forall\, (k,l) \notin  \{(i,j), (j,i)\}
    \end{equation*}
    is admissible for \eqref{eq:flowruleweak}. Using $\div \sigma = 0$, we deduce
    \begin{equation*}
        \int_\Omega \varphi \,\d (\symnabla \boverdot u)_{ij} 
        \leq  \frac{\sqrt{2}}{\varrho} \Big(\int_\Omega \symnabla \boverdot u_D : \sigma \,\d x 
        - \int_\Omega \Ab \boverdot \sigma : (\tau_\varphi - \sigma)\,\d x\Big)
    \end{equation*}        
    and consequently, since $\varphi \in C^\infty_c(\Omega)$ with $\|\varphi\|_{L^\infty(\Omega)} \leq 1$
    was arbitrary,     
    \begin{equation*}
    \begin{aligned}
        \|\symnabla \boverdot u\|_{L^2_w(\Mf(\Omega;\Rnns))}
        &\leq C\big(
        \begin{aligned}[t]
            & \|u_D\|_{H^1(\bH^1(\Omega))}\,\|\sigma\|_{L^\infty(\Lt^2(\Omega))} \\
            & + \|\boverdot \sigma\|_{L^2(\Lt^2(\Omega))} 
            + \|\boverdot \sigma\|_{L^2(\Lt^2(\Omega))} \|\sigma\|_{L^\infty(\Lt^2(\Omega))}\big)        
        \end{aligned}\\
        & \leq C\, \|u_D\|_{H^1(\bH^1(\Omega))} \big(1 + \|u_D\|_{H^1(\bH^1(\Omega))}\big),
    \end{aligned}
    \end{equation*}
    where we used \cref{lem:sigmabound}.
    
    Since $\Gamma_D$ is assumed to have a nonempty relative interior, there is a set $\Lambda \subset \Gamma_D$
    and a constant $\delta>0$ such that $\Lambda$ has positive boundary measure and $\dist(\Lambda, \partial\Gamma_D) \geq \delta$. 
    By \cite[Chap.~II, Theorem~2.1]{temam}, $\boverdot u(t)$ admits a trace in $\bL^1(\Gamma)$ for almost all $t\in (0,T)$. 
    In the following, we neglect the variable $t$ for the sake of readability.
    The restriction of  this trace to $\Lambda$ is denoted by $\boverdot u|_{\Lambda}$. 
    We extend $\sign(\boverdot u|_\Lambda)$ (where the sign is to be understood componentwise) to the whole boundary $\Gamma$ 
    by zero and apply convolution with a smoothing kernel to obtain a sequence of functions $\{\varphi_n\}\subset C^\infty(\Gamma;\R^n)$ 
    with $\supp(\varphi_n) \subset \Gamma_D$ (thanks to $\dist(\Lambda, \partial\Gamma_D) \geq \delta$)
    and $\|\varphi_n\|_{L^\infty(\Gamma;\R^n)}\leq 1$ for all $n\in \N$.
    Given these functions, let us define
    \begin{equation*}
        (\tau_n)_{ij} = \frac{\varrho}{\sqrt{2}}\, \mathfrak{E}(\varphi_{n,i} \,\nu_j + \varphi_{n,j} \,\nu_i),
    \end{equation*}
    where $\mathfrak{E}$ denoted the harmonic extension and $\nu$ is the outward normal.  
    Then, \eqref{eq:maxprin} implies $\|\tau_n\|_{\Lt^\infty(\Omega)} \leq \varrho$ and, since in addition 
    $\tau_n$ vanishes on $\Gamma_N$ by construction, we have  $\tau_n\in \Sigma(\Omega) \cap \KK(\Omega)$. 
    Note that, by the mapping properties of $\mathfrak{E}$, $\tau_n\in \Wt^{1,p}(\Omega) \embed \Sigma(\Omega)$.
    If we insert this as test function in \eqref{eq:flowruleweak} and apply again the integration by parts from \cref{lem:temam},  
    then $\div\sigma = 0$ and \eqref{eq:normaltrace} imply
    \begin{equation*}
        \int_{\Gamma_D} \varphi_n \cdot \boverdot u\, \d s
        \leq \frac{\sqrt{2}}{\varrho}\Big(\int_\Omega \tau_n : \d \symnabla (\boverdot u) 
        - \int_\Omega \symnabla \boverdot u_D : \sigma \,\d x + \int_\Omega \Ab \boverdot \sigma : (\tau_n - \sigma)\,\d x\Big).
    \end{equation*}
    Now, since $\varphi_n \to \sign(\boverdot u)$ a.e.\ in $\Lambda$,  $\varphi_n \to 0$ a.e.\ in $\Gamma_D\setminus\Lambda$
    and $|\varphi_n \cdot \boverdot u| \leq |\boverdot u|$ a.e.\ on $\Gamma_D$, Lebesgue's dominated convergence theorem along 
    with our previous estimate gives
    \begin{equation*}
    \begin{aligned}
        \|\boverdot u\|_{L^2(\bL^1(\Lambda))} \leq C\, \|u_D\|_{H^1(\bH^1(\Omega))} \big(1 + \|u_D\|_{H^1(\bH^1(\Omega))}\big).
    \end{aligned}
    \end{equation*}
    Thanks to \cite[Chap.~II, Proposition~2.4]{temam}, this completes the proof.
\end{proof}

\begin{remark}
    A priori estimates for quasistatic evolutions (which is an equivalent notion of solution as mentioned above) 
    are already proven in \cite[Thm.~5.2]{dalMaso} in a slightly different setting.
\end{remark}

\begin{lemma}\label{lem:convU}
    Let $\{u_n\}\subset \UU$ be a sequence such that, for all $n\in \N$, 
    \begin{equation}\label{eq:boundU}
        u_n(0) = u_0 \quad \text{and} \quad \int_0^T \|\boverdot u_n(t)\|_{\BD(\Omega)}^2 \,\d t \leq C
    \end{equation}
    with a constant $C>0$. Then there exists a subsequence converging weakly in $\UU$ as defined in \eqref{eq:defconvU}.
\end{lemma}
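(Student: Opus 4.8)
The plan is a compactness argument, but one that has to deliver convergence in the precise sense of \eqref{eq:defconvU}, so it splits naturally into three stages: derive a priori bounds from \eqref{eq:boundU}, extract a subsequence converging in the two required topologies, and then identify the two limits with one another so that the limit object actually belongs to $\UU$.

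First I would upgrade \eqref{eq:boundU} to bounds on the two quantities appearing in \eqref{eq:defconvU}. Since $\|\cdot\|_{\BD(\Omega)}$ dominates $\|\cdot\|_{\bL^1(\Omega)}$ and $\|\symnabla(\cdot)\|_{\Mf(\Omega;\Rnns)}$, and, via the continuous embedding $\BD(\Omega)\embed\bL^{\frac{n}{n-1}}(\Omega)$ (see \cite[Chap.~II, Theorem~2.2]{temam}), also $\|\cdot\|_{\bL^{\frac{n}{n-1}}(\Omega)}$, the hypothesis yields $\|\boverdot u_n\|_{L^2(\bL^{\frac{n}{n-1}}(\Omega))}+\|\symnabla\boverdot u_n\|_{L^2_w(\Mf(\Omega;\Rnns))}\le C$. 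Writing $u_n(t)=u_0+\int_0^t\boverdot u_n(r)\,\d r$ and using H\"older's inequality in time gives $\|u_n\|_{L^\infty(\bL^{\frac{n}{n-1}}(\Omega))}\le C$, hence $\{u_n\}$ is bounded in $H^1(\bL^{\frac{n}{n-1}}(\Omega))$.

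Next I would extract subsequences. Because $1<\tfrac{n}{n-1}\le 2$ for $n\ge 2$, the space $\bL^{\frac{n}{n-1}}(\Omega)$ is reflexive and therefore so is $H^1(\bL^{\frac{n}{n-1}}(\Omega))$; thus a subsequence (not relabeled) satisfies $u_n\weak u$ in $H^1(\bL^{\frac{n}{n-1}}(\Omega))$, and in particular $\boverdot u_n\weak\boverdot u$ in $L^2(\bL^{\frac{n}{n-1}}(\Omega))$. For the gradient part I would use the identification $L^2_w(\Mf(\Omega;\Rnns))=L^2(C_0(\Omega;\Rnns))^*$ recalled after \eqref{eq:defconvU}. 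Since $\Omega$ is a bounded open subset of $\R^n$, $C_0(\Omega;\Rnns)$ is separable, hence so is $L^2(C_0(\Omega;\Rnns))$, and the sequential Banach--Alaoglu theorem furnishes a further subsequence with $\symnabla\boverdot u_n\weak^*\mu$ in $L^2_w(\Mf(\Omega;\Rnns))$ for some $\mu$.

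The main --- and only nontrivial --- step is then to show $\mu=\symnabla\boverdot u$, which is what certifies $u\in\UU$ and upgrades the two separate convergences to $u_n\weak u$ in $\UU$. I would test both convergences against tensor functions $\phi\otimes\psi$ with $\phi\in C_c^\infty(0,T)$ and $\psi\in C_c^\infty(\Omega;\Rnns)$: for each $n$ the distributional definition of the symmetrized gradient gives $\int_\Omega\psi:\d(\symnabla\boverdot u_n(t))=-\int_\Omega\boverdot u_n(t)\cdot\div\psi\,\d x$, so passing to the limit on the left via the weak-$*$ convergence and on the right via $\boverdot u_n\weak\boverdot u$ in $L^2(\bL^{\frac{n}{n-1}}(\Omega))$ (note $\phi\,\div\psi\in L^2(0,T;\bL^{n}(\Omega))$) yields $\int_0^T\phi(t)\int_\Omega\psi:\d\mu(t)\,\d t=-\int_0^T\phi(t)\int_\Omega\boverdot u(t)\cdot\div\psi\,\d x\,\d t$ for all such $\phi,\psi$. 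By the fundamental lemma of the calculus of variations in the $t$ variable, together with a countable-dense-set-of-$\psi$ argument (exhausting $\Omega$ by compacts and using that $\mu(t)$ is a bounded measure and $\boverdot u(t)\in\bL^{\frac{n}{n-1}}(\Omega)$), this forces, for a.e.\ $t$, the measure $\mu(t)$ to be the distributional symmetrized gradient of $\boverdot u(t)$; hence $\boverdot u(t)\in\BD(\Omega)$ for a.e.\ $t$, $\symnabla\boverdot u=\mu\in L^2_w(\Mf(\Omega;\Rnns))$, and $u\in\UU$ with $u_n\weak u$ in the sense of \eqref{eq:defconvU}. The delicate point here, which I would handle carefully using the duality and weak-measurability facts from \cite[Section~8]{edwards}, is exactly this identification: that the tensor functions $\phi\otimes\psi$ are total in $L^2(C_0(\Omega;\Rnns))$ and that the exceptional null sets in $t$ can be chosen independently of $\psi$.
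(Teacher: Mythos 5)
Your proposal is correct and follows essentially the same route as the paper's proof: a priori bounds from \eqref{eq:boundU} via the embedding $\BD(\Omega)\embed\bL^{\frac{n}{n-1}}(\Omega)$ and the common initial value, extraction of subsequences by reflexivity of $H^1(\bL^{\frac{n}{n-1}}(\Omega))$ and weak-$*$ compactness in $L^2_w(\Mf(\Omega;\Rnns))=L^2(C_0(\Omega;\Rnns))^*$, and identification of the weak-$*$ limit with $\symnabla\boverdot u$ by testing against $\varphi(t)\,\tau(x)$ with $\varphi\in C^\infty_c(0,T)$, $\tau\in C^\infty_c(\Omega;\Rnns)$ and integrating by parts. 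You merely spell out some details the paper leaves implicit (separability for sequential Banach--Alaoglu, the density/null-set bookkeeping in the identification step), which is fine.
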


\begin{proof}
    Owing to \eqref{eq:boundU}, $\{\symnabla \boverdot u_n\}$ is bounded in $L^2_w(\Mf(\Omega;\Rnns))$, which, 
    according to \cite[Theorem~8.20.3]{edwards}, is the dual of $L^2(C_0(\Omega;\Rnns))^*$. 
    Thus, there exists a subsequence such that 
    \begin{equation}\label{eq:convstrain}
        \symnabla \boverdot u_{n_k} \weak^* \e \quad \text{in } L^2_w(\Mf(\Omega;\Rnns)).
    \end{equation}
    Due to $\BD(\Omega) \embed \bL^{\frac{n}{n-1}}(\Omega)$, $\{\boverdot u_{n_k}\}$ is bounded in 
    $L^2(\bL^{\frac{n}{n-1}}(\Omega))$ and, since all $u_n$ share the same initial value, $\{u_{n_k}\}$ is bounded in 
    $H^1(\bL^{\frac{n}{n-1}}(\Omega))$ so that, by reflexivity, there is another subsequence 
    (denoted w.l.o.g.\ by the same symbol) such that 
    \begin{equation}\label{eq:convdisp}
        u_{n_k} \weak u \quad \text{in } H^1(\bL^{\frac{n}{n-1}}(\Omega)).
    \end{equation}
    Now, for every $\tau \in C^\infty_c(\Omega;\Rnns)$ and every $\varphi\in C^\infty_c(0,T)$,
    \eqref{eq:convstrain} and \eqref{eq:convdisp} imply
    \begin{equation*}
    \begin{aligned}
        \int_0^T \dual{\e(t)}{\tau} \varphi(t)\,\d t
        &= \lim_{k\to\infty} \int_0^T \dual{\symnabla \boverdot u_{n_k}(t)}{\tau} \varphi(t)\,\d t \\
        &= \lim_{k \to\infty} \int_0^T \int_\Omega \boverdot u_{n_k}(t) \cdot \div \tau\,\d x\, \varphi(t)\,\d t  \\
        & = \int_0^T \int_\Omega \boverdot u(t) \cdot \div \tau\,\d x\, \varphi(t)\,\d t  
    \end{aligned}
    \end{equation*}
    and hence $\e(t) = \symnabla \boverdot u(t)$ a.e.~in $(0,T)$. 
\end{proof}

\begin{proposition}[Continuity properties of weak solutions]\label{prop:contweak}
	Let $\{ u_{D,n} \}_{n \in \mathbb{N}} \subset H^1(\bH^1(\Omega))$ be a sequence fulfilling \eqref{eq:convdiri}.
    Then, there is a subsequence of weak solutions $\{u_{n_k}, \sigma_{n_k}\}_{k \in \N}$ associated with
    $\{u_{D,n_k}\}$ such that 
    \begin{gather*}
        \sigma_{n_k} \weak \sigma \;\text{ in } H^1(\Lt^2(\Omega)), \quad 
        u_{n_k} \weak u \; \text{ in }\UU,
    \end{gather*}    	
	and the weak limit $(u,\sigma)$ is a weak solution associated with the limit $u_D$.
\end{proposition}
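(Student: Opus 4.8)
The plan is to combine the a priori bounds established in the previous lemmas with a compactness/limit-passage argument, being careful about the various weak topologies. First I would use \cref{lem:sigmabound} and \cref{lem:ubound}: the convergence assumptions \eqref{eq:convdiri} imply $\{u_{D,n}\}$ is bounded in $H^1(\bH^1(\Omega))$, so $\{\sigma_n\}$ is bounded in $H^1(\Lt^2(\Omega))$ and $\{\boverdot u_n\}$ is bounded in $L^2(\BD(\Omega))$. Since all weak solutions satisfy the same initial condition $u_n(0)=u_0$, \cref{lem:convU} produces a subsequence with $u_{n_k}\weak u$ in $\UU$ in the sense of \eqref{eq:defconvU}, and reflexivity of $H^1(\Lt^2(\Omega))$ gives a further subsequence with $\sigma_{n_k}\weak\sigma$ there. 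Along the way I would extract (via a diagonal argument or by noting that Sobolev embeddings in time give $\sigma_{n_k}(t)\weak\sigma(t)$ in $\Lt^2(\Omega)$ pointwise, or at least for a.e.\ $t$ after passing to a subsequence) enough pointwise-in-time information to handle the constraints.

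The second step is feasibility of the limit. The equilibrium and yield conditions \eqref{eq:momyield} are preserved because $\EE(\Omega)\cap\KK(\Omega)$ is a closed convex (hence weakly closed) subset of $\Lt^2(\Omega)$, and convergence in $H^1(\Lt^2(\Omega))$ gives, for a.e.\ $t$, $\sigma_{n_k}(t)\weak\sigma(t)$ in $\Lt^2(\Omega)$; alternatively one argues directly in $L^2(\Lt^2(\Omega))$ that $\sigma\in L^2(\EE(\Omega)\cap\KK(\Omega))$ and uses that $\sigma\in H^1(\Lt^2(\Omega))\embed C(\Lt^2(\Omega))$ to get the pointwise statement for all $t$. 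The initial condition $\sigma(0)=\sigma_0$, $u(0)=u_0$ passes to the limit since evaluation at $t=0$ is weakly continuous on $H^1(\Lt^2(\Omega))$ and on $H^1(\bL^{\frac{n}{n-1}}(\Omega))$.

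The main obstacle is passing to the limit in the weak flow rule inequality \eqref{eq:flowruleweak}. I would test with a fixed $\tau\in\Sigma(\Omega)\cap\KK(\Omega)$, multiply by a nonnegative $\varphi\in C_c^\infty(0,T)$ and integrate in time, so that everything becomes an inequality between time integrals. The linear terms involving $\boverdot\sigma_{n_k}$, $\symnabla\boverdot u_{D,n_k}$ and the fixed $\tau$ (note $\div\tau\in\bL^n(\Omega)$ is fixed) pass to the limit using the weak convergences $\boverdot\sigma_{n_k}\weak\boverdot\sigma$ in $L^2(\Lt^2(\Omega))$, $\symnabla\boverdot u_{D,n_k}\weak\symnabla\boverdot u_D$ (from \eqref{eq:convdiri}), $\boverdot u_{n_k}\weak\boverdot u$ in $L^2(\bL^{\frac{n}{n-1}}(\Omega))$, and the strong convergences from \eqref{eq:convdiri}; in particular the term $\int\boverdot u_{n_k}\cdot\div(\tau-\sigma_{n_k})$ splits into the fixed part $\int\boverdot u_{n_k}\cdot\div\tau$ (weak $\times$ fixed, fine) and the problematic part $-\int\boverdot u_{n_k}\cdot\div\sigma_{n_k}$. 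The genuinely delicate term is $\int_\Omega\boverdot u_{n_k}(t)\cdot\div\sigma_{n_k}(t)\,\d x$ and the quadratic term $\int_\Omega\mathbb{A}\boverdot\sigma_{n_k}:\sigma_{n_k}\,\d x$, which are products of two merely weakly convergent sequences. Here the standard trick is to avoid them entirely: rewrite the flow rule using the test function $\tau-\sigma_{n_k}$ is not allowed to vary, so instead integrate the quadratic term by parts in time, $\int_0^T\int_\Omega\mathbb{A}\boverdot\sigma_{n_k}:\sigma_{n_k}\,\varphi\,\d x\,\d t=\tfrac12\int_0^T\frac{d}{dt}\|\sigma_{n_k}\|^2_{\Ab}\varphi\,\d t=-\tfrac12\int_0^T\|\sigma_{n_k}(t)\|_{\Ab}^2\boverdot\varphi\,\d t$ (using $\sigma_{n_k}(0)=\sigma_0$ when $\varphi$ is chosen with $\varphi(T)=0$), and pass to the limit using weak lower semicontinuity of the convex functional $\sigma\mapsto\tfrac12\int_0^T\|\sigma(t)\|_{\Ab}^2|\boverdot\varphi|\,\d t$ with the appropriate sign of $\boverdot\varphi$ — this is exactly why one keeps the inequality direction. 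Similarly, for the term $\int\boverdot u_{n_k}\cdot\div\sigma_{n_k}$ I would use the integration-by-parts identity \eqref{eq:intbyparts} (valid for weak solutions, since we showed the plastic strain is deviatoric) to re-express it through $\dual{\sigma_{n_k}^D}{\boverdot z_{n_k}}$ and the equilibrium/boundary terms, or, more robustly, test the flow rule of the \emph{limit} candidate in the reverse direction and use \cref{lem:stressunique} to identify $\sigma$ as the unique reduced solution — then only the extra displacement terms need to be matched. After passing to the limit for all such $\tau$ and $\varphi\geq0$ with $\varphi(0)=\varphi(T)=0$ and localizing in $t$, one recovers \eqref{eq:flowruleweak} for a.e.\ $t$, and together with the feasibility of $(u,\sigma)$ this shows $(u,\sigma)$ is a weak solution associated with $u_D$.
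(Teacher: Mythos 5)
Your overall strategy (a priori bounds from \cref{lem:sigmabound} and \cref{lem:ubound}, extraction via \cref{lem:convU}, feasibility of the limit, then limit passage in the flow rule using lower semicontinuity of the quadratic term) is the same as the paper's, but the key step as you propose it does not go through. If you multiply the flow rule by a nonnegative $\varphi\in C_c^\infty(0,T)$ and integrate by parts in time, the quadratic term becomes $\tfrac12\int_0^T\|\sigma_{n_k}(t)\|_{\Lt^2(\Omega)_\Ab}^2\,\boverdot\varphi(t)\,\d t$, and for a compactly supported nonnegative $\varphi$ the weight $\boverdot\varphi$ necessarily changes sign. Weak lower semicontinuity of $\|\cdot\|_{\Lt^2(\Omega)_\Ab}$ (via $\sigma_{n_k}(t)\weak\sigma(t)$ in $\Lt^2(\Omega)$ for every $t$) helps only on $\{\boverdot\varphi\le 0\}$; on $\{\boverdot\varphi>0\}$ you would need upper semicontinuity of $t\mapsto\|\sigma_{n_k}(t)\|^2$ under weak convergence, which fails — there is no compactness giving strong convergence of $\sigma_{n_k}(t)$ in $\Lt^2(\Omega)$. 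Your hedge ``with the appropriate sign of $\boverdot\varphi$'' cannot be realized. The paper avoids this by \emph{not} localizing before the limit: it integrates over all of $[0,T]$ with time-dependent test functions $\tau\in L^2(\Lt^2(\Omega))$, $\tau(t)\in\Sigma(\Omega)\cap\KK(\Omega)$ a.e., so that the quadratic term telescopes to $\tfrac12\|\sigma_{n_k}(T)\|_{\Lt^2(\Omega)_\Ab}^2-\tfrac12\|\sigma_0\|_{\Lt^2(\Omega)_\Ab}^2$, where lower semicontinuity at $t=T$ has exactly the favorable sign and the initial value is fixed (see \eqref{eq:flowlimit2}); the pointwise-in-time inequality is then recovered afterwards because the class of admissible $\tau$ is rich enough (e.g.\ $\tau=\sigma$ off an arbitrary measurable subset of $(0,T)$).

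Two smaller points. First, the term $\int_\Omega\boverdot u_{n_k}\cdot\div\sigma_{n_k}\,\d x$ that you call ``genuinely delicate'' vanishes identically, since every weak solution satisfies the equilibrium condition $\div\sigma_{n_k}=0$ by \eqref{eq:momyield}; no appeal to \eqref{eq:intbyparts} or to \cref{lem:stressunique} is needed there (the paper does use \cref{lem:stressunique} together with \cref{lem:contred}, but only to get convergence of the stresses for the whole sequence and to identify the limit). Second, the term $\scalarproduct{\symnabla\boverdot u_{D,n_k}}{\sigma_{n_k}}{L^2(\Lt^2(\Omega))}$ is \emph{not} a harmless linear term: it is a product of two merely weakly convergent sequences, and its convergence requires an integration by parts in time combined with the strong convergences $u_{D,n}\to u_D$ in $L^2(\bH^1(\Omega))$ and $u_{D,n}(T)\to u_D(T)$ in $\bH^1(\Omega)$ from \eqref{eq:convdiri} (this is the content of Lemma~3.9 of \cite{meywal} invoked in the paper); you gesture at the strong convergences but should carry this argument out explicitly.
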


\begin{proof}
    Since we already know that the stress component of every weak solution is also a reduced one and the latter is unique 
    by \cref{lem:stressunique}, the convergence of the stresses follows from \cref{lem:contred} (even for the whole sequence).
    
    Owing to \cref{lem:ubound}, $\{\boverdot u_{n}\}$ fulfills the boundedness assumption in \eqref{eq:boundU} so that, 
    by \cref{lem:convU}, there is a subsequence $\{u_{n_k}\}$ converging weakly in $\UU$ to some limit $u\in \UU$.
    Due to $H^1(\bL^1(\Omega))\embed C(\bL^1(\Omega))$, the weak limit $u$ also satisfies the initial condition.
	
	It remains to prove that $(u, \sigma)$ fulfills the flow rule inequality \eqref{eq:flowruleweak}.
    To this end, choose an arbitrary $\tau \in L^2(\Lt^2(\Omega))$ with $\tau(t) \in \Sigma(\Omega) \cap \mathcal{K}(\Omega)$	for almost all $t \in [0,T]$. 
    Then, the flow rule inequality for $(u_{n_k}, \sigma_{n_k})$ along with $\div \sigma_{n_k} = 0$
    and the (weak) convergences of $u_{D, n_k}$, $u_{n_k}$, and $\sigma_{n_k}$ yields
    \begin{equation}\label{eq:flowlimit}
	\begin{aligned}
	    & \liminf_{k\to \infty} \scalarproduct{\mathbb{A}\boverdot{\sigma}_{n_k}}{\sigma_{n_k}}{L^2(\Lt^2(\Omega))} \\ 
	    & \leq \lim_{k\to\infty} 
        \begin{aligned}[t]
    	    \Big[ & \scalarproduct{\mathbb{A}\boverdot{\sigma}_{n_k} - \symnabla \boverdot{u}_{D,n_k}}{\tau}{L^2(\Lt^2(\Omega))} \\
 	        & + \int_0^T \int_\Omega(\boverdot{u}_{n_k} - \boverdot{u}_{D,n_k})\div\tau\,\d x \d t
		     -  \scalarproduct{\symnabla \boverdot{u}_{D,n_k}}{\sigma_{n_k}}{L^2(\Lt^2(\Omega))} \Big]
        \end{aligned} \\
		 & = \scalarproduct{\mathbb{A}\boverdot{\sigma} - \symnabla \boverdot{u}_{D}}{\tau}{L^2(\Lt^2(\Omega))} 
	    + \int_0^T \int_\Omega(\boverdot u -  \boverdot{u}_{D})\div\tau\,\d x \d t
		 -  \scalarproduct{\symnabla \boverdot{u}_{D}}{\sigma}{L^2(\Lt^2(\Omega))}
	\end{aligned}    
    \end{equation}
    where we used Lemma~3.9 in our companion paper~\cite{meywal} for the convergence of the last term.
	On the other hand, the weak lower semicontinuity of $\norm{\cdot}{\Lt^2(\Omega)_\mathbb{A}}$ together with
	$H^1(\Lt^2(\Omega)) \hookrightarrow C(\Lt^2(\Omega))$ gives
	\begin{equation}\label{eq:flowlimit2}
    \begin{aligned}
		& \liminf_{k \rightarrow \infty}
		\scalarproduct{\mathbb{A} \boverdot{\sigma}_{n_k}}{\sigma_{n_k}}{L^2(\Lt^2(\Omega))}\\
		&\qquad = \frac{1}{2} \liminf_{k \rightarrow \infty} \norm{\sigma_{n_k}(T)}{\Lt^2(\Omega)_\Ab}^2
		- \frac{1}{2} \norm{\sigma_0}{\Lt^2(\Omega)_\Ab}^2\\
		&\qquad \geq \frac{1}{2} \norm{\sigma(T)}{\Lt^2(\Omega)_\Ab}^2
		- \frac{1}{2} \norm{\sigma_0}{\Lt^2(\Omega)_\Ab}^2 
		= \scalarproduct{\mathbb{A} \boverdot{\sigma}}{\sigma}{L^2(\Lt^2(\Omega))}.    
    \end{aligned}
	\end{equation}
	Together with \eqref{eq:flowlimit} and $\div \sigma = 0$, this implies the flow rule inequality for the weak limit.
\end{proof}

Given these boundedness and continuity results, we can now establish the existence of at least one optimal solution. 
Before we do so, let us recall our optimization problem and state it in a rigorous manner:
\begin{equation}\tag{P}\label{eq:optprob}
\left\{\,
\begin{aligned}
	\min \shortspace & J(u, u_D) := \Psi(u) + \frac{\alpha}{2}\,\|u_D\|_{H^1(\bH^2(\Omega))}^2 \\
	\text{s.t.} \shortspace & u_D \in H^1(\bH^{2}(\Omega)), \quad (u,\sigma) \in \UU \times H^1(\Lt^2(\Omega)),\\
	&  (u,\sigma) \text{ is a weak solution w.r.t.\ } u_D,
	\quad \text{and} \quad u_D(0) - u_0 \in \bH^1_D(\Omega),
\end{aligned}\right.
\end{equation}
where $\Psi : \UU \to \R$ is bounded from below and 
lower semicontinous w.r.t.~weak convergence in $\UU$ as defined in \eqref{eq:defconvU}, i.e., 
\begin{equation}\label{eq:objlsc}
    u_n \weak u \text{ in }\UU \quad \Longrightarrow \quad
    \liminf_{n\to\infty} \Psi(u_n) \geq \Psi(u).
\end{equation}
An example for such a functional $\Psi$ will be given in \cref{sec:convmin} below.

\begin{theorem}[Existence of optimal solutions]
\label{thm:existenceOfAGlobalSolution}
	There exists a globally optimal solution of \cref{eq:optprob}.
\end{theorem}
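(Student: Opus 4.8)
The plan is to run the classical direct method, relying on the compactness and stability results assembled above. First I would check that \eqref{eq:optprob} has a nonempty feasible set and a finite infimal value. For nonemptiness, take $u_D \equiv u_0$ constant in time: since $u_0 \in \bH^2(\Omega)$ this lies in $H^1(\bH^2(\Omega))$, the constraint $u_D(0) - u_0 = 0 \in \bH^1_D(\Omega)$ holds trivially, and the existence theorem for weak solutions \cite{suquet} provides a weak solution $(u,\sigma)$ w.r.t.\ this $u_D$. As $\Psi$ is bounded from below and the Tikhonov term is nonnegative, $J$ is bounded from below on the feasible set, so its infimum $m \in \R$ is finite.

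Next I would take a minimizing sequence $\{(u_n,\sigma_n,u_{D,n})\}$. Boundedness of $J(u_n,u_{D,n})$ together with the lower bound on $\Psi$ forces $\{u_{D,n}\}$ to be bounded in $H^1(\bH^2(\Omega))$, so after passing to a subsequence $u_{D,n} \weak u_D$ in $H^1(\bH^2(\Omega))$. I then have to verify that $\{u_{D,n}\}$ satisfies \eqref{eq:convdiri}: weak convergence in $H^1(\bH^1(\Omega))$ is immediate from $\bH^2(\Omega) \embed \bH^1(\Omega)$; strong convergence in $L^2(\bH^1(\Omega))$ follows from the Aubin--Lions lemma, combining the compact (Rellich) embedding of $\bH^2(\Omega)$ into $\bH^1(\Omega)$ with the bound on $\boverdot u_{D,n}$ in $L^2(\bH^2(\Omega)) \embed L^2(\bH^1(\Omega))$; and, since evaluation at $t=T$ is a bounded linear map $H^1(0,T;\bH^2(\Omega)) \to \bH^2(\Omega)$, we get $u_{D,n}(T) \weak u_D(T)$ in $\bH^2(\Omega)$, hence, after a further subsequence, $u_{D,n}(T) \to u_D(T)$ in $\bH^1(\Omega)$ by Rellich. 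The same reasoning at $t=0$ gives $u_{D,n}(0) \weak u_D(0)$ in $\bH^1(\Omega)$, and since $\bH^1_D(\Omega)$ is a closed, hence weakly closed, subspace of $\bH^1(\Omega)$, the constraint $u_{D,n}(0)-u_0 \in \bH^1_D(\Omega)$ is inherited by the limit.

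Now I would pass to the limit in the state system. The pairs $(u_n,\sigma_n)$ are weak solutions w.r.t.\ $u_{D,n}$, so by \cref{lem:ubound} they obey the bound \eqref{eq:boundU}, and the argument of \cref{prop:contweak} (resting on \cref{lem:convU} for the displacements, \cref{lem:stressunique} for the stresses, and the weak lower semicontinuity of $\|\cdot\|_{\Lt^2(\Omega)_\Ab}$ for the flow rule) yields a further subsequence with $\sigma_{n_k} \weak \sigma$ in $H^1(\Lt^2(\Omega))$, $u_{n_k} \weak u$ in $\UU$, and $(u,\sigma)$ a weak solution w.r.t.\ $u_D$. Hence $(u,\sigma,u_D)$ is feasible for \eqref{eq:optprob}. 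It remains to conclude by lower semicontinuity: $\Psi(u) \le \liminf_k \Psi(u_{n_k})$ by \eqref{eq:objlsc}, and $\|u_D\|_{H^1(\bH^2(\Omega))}^2 \le \liminf_k \|u_{D,n_k}\|_{H^1(\bH^2(\Omega))}^2$ by weak lower semicontinuity of the Hilbert-space norm, so $J(u,u_D) \le \liminf_k J(u_{n_k},u_{D,n_k}) = m$ and $(u,\sigma,u_D)$ is a global minimizer.

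Since all the serious analysis is already encapsulated in \cref{lem:ubound}, \cref{lem:convU} and \cref{prop:contweak}, the only genuinely delicate step that remains is the verification of \eqref{eq:convdiri} for the minimizing sequence of controls --- in particular, upgrading weak convergence in $H^1(\bH^2(\Omega))$ to strong convergence in $L^2(\bH^1(\Omega))$ and to norm convergence of the endpoint values in $\bH^1(\Omega)$. This is exactly the place where the choice of the $\bH^2$-in-space norm in the Tikhonov term (rather than a mere $\bH^1$-norm) pays off, via the compactness of $\bH^2(\Omega) \embed \bH^1(\Omega)$ and the Aubin--Lions lemma.
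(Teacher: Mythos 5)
Your proposal is correct and follows essentially the same route as the paper: feasibility via the constant control $u_D \equiv u_0$ (the paper exhibits the constant triple $(u_0,\sigma_0,u_0)$ directly, you invoke the Suquet existence result — an immaterial difference), boundedness of the controls from the Tikhonov term, extraction of a subsequence satisfying \eqref{eq:convdiri} by compact embedding, feasibility of the limit via \cref{prop:contweak}, and optimality by weak lower semicontinuity of the norm and of $\Psi$. Your explicit verification of \eqref{eq:convdiri} (Aubin--Lions plus weak continuity of the endpoint evaluation) simply fills in what the paper compresses into ``via continuous and compact embedding.''
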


\begin{proof} Based on our above findings, 
	the assertion immediately follows from the standard direct method of calculus of variations.
	Nevertheless, let us shortly sketch the arguments. 
	First, we observe that the triple $(u, \sigma, u_D) \equiv  (u_0, \sigma_0, u_0)$  (constant in time) 
	satisfies the constraints in \eqref{eq:optprob}
    so that the feasible set is nonempty. (At this point, we need the additional regularity $u_0\in \bH^2(\Omega)$.)
    Let $(u_n, \sigma_n, u_{D,n})$ be a minimizing sequence. 
    Then either $(u_0, u_0)$ is already optimal or $J(u_n, u_{D,n}) \leq J(u_0, u_0) < \infty$ for $n\in\N$ sufficiently large.
    Thus, since $\Psi$ is bounded from below, $\{u_{D, n}\}$ is bounded in $H^1(\bH^2(\Omega))$.
    Via continuous and compact embedding, there is thus a subsequence satisfying \eqref{eq:convdiri}. 
    Clearly, the associated limit satisfies the conditions on the initial value in \eqref{eq:optprob}.
    Moreover, according to \cref{prop:contweak}, a subsequence of weak solutions converges weakly in 
    $\UU \times H^1(\Lt^2(\Omega))$ to a weak solution. Thus the weak limit is feasible
    and the weak lower semicontinuity of norms and of $\Psi$ implies its optimality.
\end{proof}

\begin{remark}[More general objectives]
    The proof of existence readily transfers to slightly more general objectives than the one in \eqref{eq:optprob}.
    For instance, one could add a term of the form $\Phi(\sigma)$ with a function $\Phi: H^1(\Lt^2(\Omega))\to \R$, which 
    weakly lower semicontinuous and bounded from below. Since objectives of this form have already been discussed in 
    the companion paper, we restrict ourselves to objectives just depending on $u$ in order to keep the discussion concise. 
    Moreover, one could use other Tikhonov terms different from the $H^1(\bH^2(\Omega))$-norm to ensure 
    the convergence properties in \eqref{eq:convdiri} required for \cref{prop:contweak}.
    For example, thanks to the Aubin-Lions lemma, a Tikhonov term of the form 
    \begin{equation*}
        \frac{\alpha}{2}\, \Big( \|u_D\|_{H^1(\bH^1(\Omega))}^2 + \|u_D\|_{L^2(X)}^2 \Big)
    \end{equation*}
    with any Banach space $X$ embedding compactly in $\bH^1(\Omega)$ (such as e.g.~$\bH^2(\Omega)$)
    is sufficient to guarantee \eqref{eq:convdiri} for (a subsequence of) a minimizing sequence.
    However, in order to shorten presentation, we just consider the $H^1(\bH^2(\Omega))$-norm.
\end{remark}

%%%%%%%%%%%%%%%%%%%%%%%%%%%%%%%%%%%%%%%%%%%%%%%%%%%%%%%%
\section{Yosida Regularization and Reverse Approximation}\label{sec:reverse}
As already mentioned above, the ultimate goal of our analysis is to establish conditions that guarantee 
that optimal solutions to the optimization problem \eqref{eq:optprob} governed by perfect plasticity 
can be approximated via Yosida regularization. 
The most crucial point in this respect is the so-called \emph{reverse approximation}, which 
essentially means to construct a \emph{recovery sequence} for a given perfect plastic solution.
This is a rather challenging task, as \cref{ex:1d} illustrates: one easily verifies that every sequence of 
regularized solutions tends to the linear solution $u(t,x) = 2\,t\,x$ for regularization parameter tending to zero, 
although there are infinitely many other solutions.
There is thus no hope that every perfect plastic 
solution can be approximated via Yosida regularization!
However, when it comes to optimization, there is not only the state (i.e., the solution of the perfect plasticity system), 
but also the \emph{control} variables, which can be used to construct a recovery sequence. 
Unfortunately, the Dirichlet data $u_D$, which serve as control variables in our case, are not sufficient for this purpose.
Instead we need a set of control variables that is rich enough to generate a sufficiently large set of regularized solutions. 
For this purpose, we introduce an \emph{additional control variable in form of distributed loads} and end up with 
the following regularized version of the state equation:
\begin{subequations}\label{eq:regularizedStateEquation}
\begin{alignat}{3}
    -\div 	\sigma_\lambda(t) &= \ell(t) & \quad & \text{in } \bH^{-1}_D(\Omega),\\ 
	\sigma_\lambda(t) &= \mathbb{C} (\symnabla u_\lambda(t) - z_\lambda(t)) & & \text{in }\Lt^2(\Omega),\label{eq:regstress}\\
	\boverdot{z}_\lambda(t) & = \partial I_\lambda(\sigma_\lambda(t)) &&\text{in } \Lt^2(\Omega), \label{eq:regsubdiff}\\
	u_\lambda(t) - u_{D}(t) &\in \bH^1_D(\Omega), \\
	(u_\lambda, \sigma_\lambda)(0) &= (u_0, \sigma_0) & & \text{in } \bH^1(\Omega)\times \Lt^2(\Omega).
\end{alignat}
\end{subequations}
where $\lambda > 0$ is the regularization parameter, $I_\lambda$ is the Yosida regularization of the 
indicator functional, see \eqref{eq:yosida}, and $\ell\in H^1(\bH^{-1}_D(\Omega))$ is the mentioned load.
Existence and uniqueness of a solution to the regularized state equation \cref{eq:regularizedStateEquation} 
follows from Banach's fixed point theorem and can be proven by a reduction of the system to an equation in the variable $z$ only, 
cf.\ e.g.\ \cite[Proposition~3.15]{meywal}. This gives rise to the following

\begin{lemma}[Existence of solutions to the regularized state system, {\cite[Corollary~3.16]{meywal}}]
	For every $\lambda > 0$, $\ell \in H^1(\bH^{-1}_D(\Omega))$, and $u_D \in H^1(\bH^1(\Omega))$
	with $\ell(0) = 0$ and $u_D(0)|_{\Gamma_D} = u_0|_{\Gamma_D}$,
	there exists a unique solution
	$(u_\lambda, \sigma_\lambda, z_\lambda) \in H^1(\bH^1(\Omega)) \times H^1(\Lt^2(\Omega)) \times H^1(\Lt^2(\Omega))$
	of \cref{eq:regularizedStateEquation}. 
	
   The associated solution operator is globally Lipschitz continuous with a Lipschitz constant proportional to $\lambda^{-1}$.
\end{lemma}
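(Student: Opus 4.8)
The plan is to follow the route indicated after the statement: reduce \cref{eq:regularizedStateEquation} to an evolution equation in the plastic strain $z_\lambda$ alone and then invoke Banach's fixed point theorem. First I would treat the ``elliptic part'' for frozen time and frozen $z$: given $t\in[0,T]$ and $z\in\Lt^2(\Omega)$, the equations $-\div\sigma=\ell(t)$, $\sigma=\Cb(\symnabla u-z)$, $u-u_D(t)\in\bH^1_D(\Omega)$ amount to the linear elasticity problem of finding $u\in\bH^1(\Omega)$ with $u-u_D(t)\in\bH^1_D(\Omega)$ and
\begin{equation*}
    \scalarproduct{\Cb(\symnabla u-z)}{\symnabla\varphi}{\Lt^2(\Omega)}=\dual{\ell(t)}{\varphi}\qquad\forall\,\varphi\in\bH^1_D(\Omega).
\end{equation*}
Because $\Cb$ is symmetric and coercive and $\Gamma_D$ has positive surface measure, Korn's inequality and Lax--Milgram give a unique $u$, hence a unique $\sigma$. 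Collecting the data, one may write $\sigma=\mathcal S z+g(t)$ with a fixed $\mathcal S\in\LL(\Lt^2(\Omega))$ (whose norm depends on neither $\lambda$ nor the data) and with $g\in H^1(\Lt^2(\Omega))$ built from $\ell$ and $u_D$; the time regularity of $g$ follows from $\ell\in H^1(\bH^{-1}_D(\Omega))$, $u_D\in H^1(\bH^1(\Omega))$ and the boundedness of the elasticity solution map. The displacement $u$ depends on $(z,\ell,u_D)$ in the same affine, bounded way.

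Substituting $\sigma_\lambda(t)=\mathcal S z_\lambda(t)+g(t)$ and $\partial I_\lambda(\sigma)=\tfrac1\lambda(\sigma-\pi_\KK(\sigma))$ into the flow rule \eqref{eq:regsubdiff} turns the system into the initial value problem
\begin{equation*}
    \boverdot z_\lambda(t)=\frac1\lambda\Big(\mathcal S z_\lambda(t)+g(t)-\pi_\KK\big(\mathcal S z_\lambda(t)+g(t)\big)\Big)=:F(t,z_\lambda(t)),\qquad z_\lambda(0)=z_0,
\end{equation*}
in the Hilbert space $\Lt^2(\Omega)$, with $z_0:=\symnabla u_0-\Ab\sigma_0$. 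Since $\pi_\KK$ is nonexpansive, $\|F(t,z_1)-F(t,z_2)\|_{\Lt^2(\Omega)}\le\tfrac{2\|\mathcal S\|}{\lambda}\,\|z_1-z_2\|_{\Lt^2(\Omega)}$, so $F$ is globally Lipschitz in its second argument with constant of order $\lambda^{-1}$, and $t\mapsto F(t,z)$ inherits the continuity (indeed the $H^1$-regularity) of $g$. The Picard--Lindel\"of theorem for ODEs in Banach spaces --- equivalently, Banach's fixed point theorem for the Volterra operator on $C([0,T];\Lt^2(\Omega))$ equipped with a Bielecki-weighted norm to obtain a contraction on all of $[0,T]$ --- then yields a unique $z_\lambda\in C^1([0,T];\Lt^2(\Omega))\subset H^1(\Lt^2(\Omega))$. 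Feeding $z_\lambda$ back into the elasticity solver produces $\sigma_\lambda=\mathcal S z_\lambda+g\in H^1(\Lt^2(\Omega))$ and $u_\lambda\in H^1(\bH^1(\Omega))$, and by construction $(u_\lambda,\sigma_\lambda,z_\lambda)$ solves \cref{eq:regularizedStateEquation}.

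Two checks remain. For the initial conditions, note that $\div\sigma_0=0$, $\ell(0)=0$ and $u_D(0)|_{\Gamma_D}=u_0|_{\Gamma_D}$ (together with $u_0\in\bH^2(\Omega)$ and $\sigma_0\in\bW^{1,p}(\Omega)$, which make the traces meaningful) imply that $u_0$ itself solves the elasticity problem at $t=0$ with source $z_0$: indeed $u_0-u_D(0)\in\bH^1_D(\Omega)$ since the traces coincide on $\Gamma_D$, and $-\div(\Cb(\symnabla u_0-z_0))=-\div\sigma_0=0=\ell(0)$; uniqueness then gives $u_\lambda(0)=u_0$ and $\sigma_\lambda(0)=\Cb(\symnabla u_0-z_0)=\sigma_0$. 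For the Lipschitz dependence, I would take two data pairs, subtract the two $z$-ODEs, test the difference with $z_\lambda^1-z_\lambda^2$, use the Lipschitz bounds on $\mathcal S$, $\pi_\KK$ and the data-to-$g$ map, and apply Gronwall's inequality; this bounds $\|z_\lambda^1-z_\lambda^2\|_{H^1(\Lt^2(\Omega))}$, and the estimates for $\sigma_\lambda$ and $u_\lambda$ follow from the boundedness of $\mathcal S$ and of the elasticity solver, with an overall constant proportional to $\lambda^{-1}$.

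The only slightly delicate points are keeping track of the constants so that the final Lipschitz constant genuinely scales like $\lambda^{-1}$ (the Gronwall factor must be handled with some care, but since $F$ is globally Lipschitz in its second variable the dependence on the data is automatically Lipschitz on the fixed interval $[0,T]$) and checking that the compatibility assumptions on the initial data are exactly what is needed for $\sigma_\lambda(0)=\sigma_0$. Everything else is routine linear elliptic theory and Hilbert-space ODE theory; the complete computations can be found in \cite[Proposition~3.15, Corollary~3.16]{meywal}.
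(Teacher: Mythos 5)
Your proposal is correct and follows essentially the same route the paper takes (and defers to \cite[Proposition~3.15, Corollary~3.16]{meywal} for): reduction of \cref{eq:regularizedStateEquation} to an evolution equation in $z$ alone via the linear elasticity solve, existence and uniqueness by Banach's fixed point theorem using the global Lipschitz continuity of $\partial I_\lambda$ with constant $\lambda^{-1}$, and Lipschitz dependence on the data via the same Lipschitz estimate for the Yosida approximation together with a Gronwall argument. The only point you rightly flag but leave open --- tracking constants so the solution map's Lipschitz constant is genuinely proportional to $\lambda^{-1}$ --- is likewise not elaborated in the paper, which refers to the companion paper and to the Yosida Lipschitz estimate for it.
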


The proof of existence is a direct consequence of the Lipschitz continuity of $\partial I_\lambda$ and Banach's 
contraction principle. 
In \cite{meywal}, the external loads are set to zero, but it is straightforward to incorporate them into the 
existence theory. 
The Lipschitz continuity of the solution mapping 
directly follows from the Lipschitz estimate for the Yosida approximation, see e.g.~ \cite[Proposition~55.2(b)]{zeidler3}.

Before we address the approximation properties of this regularization approach 
and its convergence behavior for $\lambda$ tending to zero in \cref{sec:convmin} below, 
see \cref{prop:convYosida}, we first lay the foundations for the construction of a recovery sequence 
in the upcoming three lemmas. 
Unfortunately, as already indicated in the introduction, the passage to the limit in the regularized state equation 
in \cref{prop:convYosida} below requires a rather high regularity of the stress field, and the recovery sequence
has to fulfill this regularity, too, as it is a constraint in the regularized optimal control problem \eqref{eq:optprobYosida}.
The key issue for our reverse approximation argument is therefore to 
improve the regularity of the stress field provided a displacement field with higher regularity is given.
To this end, we first need an auxiliary result on the derivative of the Yosida regularization.
Since the set of admissible stresses admits a pointwise representation by the set $K$, 
the Fr\'echet-derivative of the Yosida regularization does the same, i.e., given an arbitrary $\tau \in \Lt^2(\Omega)$, it holds
\begin{equation}\label{eq:yosidaptwise}
    \partial I_\lambda(\tau)(x) = \frac{1}{\lambda}\big[\tau(x) - \pi_K(\tau(x))\big]
    \quad \text{f.a.a.\ } x\in \Omega,
\end{equation}
where $\pi_K: \Rnns \to \Rnns$ is the projection on $K$.
This pointwise representation allows to derive the following

\begin{lemma}\label{lem:yosida}
    Let $\lambda > 0$, $p>2$, and $\tau \in \Wt^{1,p}(\Omega)$ be arbitrary. 
    Then $\partial I_\lambda(\tau) \in \Wt^{1,p}(\Omega)$  and there holds
    \begin{align}
        \|\partial I_\lambda(\tau)\|_{\Wt^{1,p}(\Omega)} &\leq \frac{1}{\lambda}\, \|\tau\|_{\Wt^{1,p}(\Omega)} \label{eq:yosidaest}
    \intertext{and}
		\big(\partial_i(\partial I_\lambda(\tau)) \colon \partial_i \tau\big)(x) &\geq 0 \quad \text{a.e.\ in } \Omega, 
		\quad \forall\, i = 1, ..., n. \label{eq:weakderivYosi}
    \end{align}
\end{lemma}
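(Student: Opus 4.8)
The plan is to exploit the pointwise formula \eqref{eq:yosidaptwise} together with the fact that the metric projection $\pi_K \colon \Rnns \to \Rnns$ onto the closed convex set $K$ is globally $1$-Lipschitz. The map $F \colon \Rnns \to \Rnns$, $F(A) := \tfrac{1}{\lambda}(A - \pi_K(A))$, is therefore Lipschitz with constant $\tfrac{1}{\lambda}$, and moreover it is monotone: for all $A, B \in \Rnns$ one has $(F(A) - F(B)) : (A - B) \geq 0$, since $\tfrac{1}{\lambda}\big((A-B) - (\pi_K(A)-\pi_K(B))\big):(A-B) \geq \tfrac{1}{\lambda}\big(|A-B|_F^2 - |\pi_K(A)-\pi_K(B)|_F \, |A-B|_F\big) \geq 0$ by Cauchy--Schwarz and the Lipschitz bound on $\pi_K$. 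With this at hand, the statement reduces to a chain-rule argument for the Nemytskii operator induced by $F$ on $\Wt^{1,p}(\Omega)$.

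First I would establish the Sobolev regularity and the estimate \eqref{eq:yosidaest}. Since $F$ is globally Lipschitz and $F(0) = \tfrac1\lambda(0 - \pi_K(0)) = 0$ (because $0 \in K$ by \eqref{eq:Kinterior}, so $\pi_K(0) = 0$), we have $|F(A)|_F \leq \tfrac1\lambda |A|_F$ pointwise; hence $\partial I_\lambda(\tau) = F\circ\tau \in \Lt^p(\Omega)$ with $\|F\circ\tau\|_{\Lt^p(\Omega)} \leq \tfrac1\lambda\|\tau\|_{\Lt^p(\Omega)}$. For the weak derivatives I would use the standard result that composing a $W^{1,p}$ function ($p \geq 1$) with a globally Lipschitz function again yields a $W^{1,p}$ function whose gradient is bounded a.e. by the Lipschitz constant times $|\nabla\tau|$; concretely $\partial_i(F\circ\tau)$ exists in $\Lt^p(\Omega)$ and $|\partial_i(F\circ\tau)(x)|_F \leq \tfrac1\lambda|\partial_i\tau(x)|_F$ a.e. Summing over $i$ and combining with the $L^p$ bound gives \eqref{eq:yosidaest}. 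For this chain rule one may argue by mollification: approximate $\tau$ by smooth $\tau_k \to \tau$ in $\Wt^{1,p}(\Omega)$ (possible since $p > 2 \geq 1$ and the domain is nice), note $F$ can be approximated by $C^1$ Lipschitz functions $F_\varepsilon$ (again via mollification, with uniformly bounded derivatives), apply the classical chain rule to $F_\varepsilon \circ \tau_k$, and pass to the limit using the uniform Lipschitz bounds and weak compactness in $\Lt^p$.

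Next I would prove the sign condition \eqref{eq:weakderivYosi}. Fix $i$ and a direction $h > 0$; by the monotonicity of $F$ noted above, for a.e.\ $x$ in the (translated) domain we have $\big(F(\tau(x + h e_i)) - F(\tau(x))\big) : \big(\tau(x+he_i) - \tau(x)\big) \geq 0$. Dividing by $h^2$ and letting $h \downarrow 0$ along a suitable sequence, the difference quotients of $\tau$ converge to $\partial_i\tau$ in $\Lt^p_{\mathrm{loc}}$ and those of $F\circ\tau$ to $\partial_i(F\circ\tau)$ in $\Lt^p_{\mathrm{loc}}$ (by Step~2), so after passing to an a.e.-convergent subsequence the inequality survives in the limit, yielding $\big(\partial_i(\partial I_\lambda(\tau))\big)(x) : \big(\partial_i\tau\big)(x) \geq 0$ a.e. Alternatively, and perhaps more cleanly, I would run the mollification argument of Step~2 keeping track of signs: for $C^1$ monotone $F_\varepsilon$ one has $\partial_i(F_\varepsilon\circ\tau_k) : \partial_i\tau_k = \big(DF_\varepsilon(\tau_k)[\partial_i\tau_k]\big) : \partial_i\tau_k \geq 0$ pointwise because the symmetric part of $DF_\varepsilon$ is positive semidefinite (monotonicity of $F_\varepsilon$), and this nonnegativity is preserved under the $L^1_{\mathrm{loc}}$-limit.

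The main obstacle I anticipate is the rigorous justification of the chain rule and the sign-preservation for the Nemytskii operator of the merely Lipschitz (not $C^1$) map $F = \tfrac1\lambda(\id - \pi_K)$ — in particular making sure that $DF$ exists a.e.\ (Rademacher) with positive semidefinite symmetric part and that the pointwise product $\partial_i(F\circ\tau):\partial_i\tau$ is well defined and nonnegative even on the set where $\tau$ takes values in the nonsmoothness locus $\partial K$. The difference-quotient argument above sidesteps the need for a.e.\ differentiability of $F$ and is the safest route; the only care needed there is extracting a common subsequence along which both difference quotients converge a.e., which is routine given the $\Lt^p_{\mathrm{loc}}$ convergence of difference quotients of $W^{1,p}$ functions.
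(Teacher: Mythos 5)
Your proposal is correct, and it overlaps with the paper's argument on the first part while taking a genuinely different route on the second. For the membership $\partial I_\lambda(\tau)\in \Wt^{1,p}(\Omega)$ and the bound \eqref{eq:yosidaest}, both you and the paper invoke the chain rule for compositions of Lipschitz maps with Sobolev functions together with the facts that $\pi_K(0)=0$ and that $\id-\pi_K$ is nonexpansive; note that the $\tfrac1\lambda$-Lipschitz constant of $F=\tfrac1\lambda(\id-\pi_K)$ does \emph{not} follow from the $1$-Lipschitzness of $\pi_K$ by the triangle inequality (that only gives $\tfrac2\lambda$) but from the firm nonexpansiveness of projections, $(\pi_K(A)-\pi_K(B)):(A-B)\geq|\pi_K(A)-\pi_K(B)|_F^2$ — the paper states this as ``easily seen,'' and you should make the same fact explicit. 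Where you genuinely diverge is the sign condition \eqref{eq:weakderivYosi}: the paper writes out the weak-derivative formula $\partial_m[\partial I_\lambda(\tau)]_{ij}=\tfrac1\lambda\big(\partial_m\tau_{ij}-\sum_{kl}\partial_{\tau_{kl}}[\pi_K(\tau)]_{ij}\,\partial_m\tau_{kl}\big)$ and concludes via the directional-derivative bound $|\pi_K'(A;B)|_F\leq|B|_F$ and Cauchy--Schwarz, which tacitly requires interpreting the chain-rule formula at points where the Lipschitz map $\pi_K$ need not be differentiable; your difference-quotient argument instead uses only the monotonicity of $\id-\pi_K$, the $L^p_{\mathrm{loc}}$ convergence of difference quotients of $W^{1,p}$ functions, and an a.e.-convergent subsequence, thereby sidestepping any pointwise differentiability of $\pi_K$ and the exact form of the chain rule. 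This makes your proof of \eqref{eq:weakderivYosi} somewhat more robust (it needs only the qualitative chain rule for membership plus the derivative bound), at the price of being slightly less explicit than the paper's formula-based computation; both yield the same constants and the same conclusion.
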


\begin{proof}
    As a projection, $\pi_k: \Rnns \to \Rnns$ is globally Lipschitz continuous. 
    Thus, the chain rule for Sobolev functions (see e.g.~\cite[Thm~2.1.11]{ziemer}) implies that 
    $\partial I_\lambda(\tau) \in \Wt^{1,p}(\Omega)$ with 
    \begin{equation}\label{eq:yosidaweak}
        \ddp{}{x_m} [\partial I_\lambda(\tau)]_{ij} = 
        \frac{1}{\lambda}\Big( \ddp{\tau_{ij}}{x_m}  - \sum_{k l} \ddp{}{\tau_{kl}} [\pi_K(\tau)]_{ij} \ddp{\tau_{kl}}{x_m}  \Big) .
    \end{equation}
    Since the Lipschitz constant of the projection equals one, its directional derivative clearly satisfies
    $|\pi_K'(A;B)|_F \leq |B|_F$ for all $A, B\in \Rnns$ and, consequently,     
    \begin{equation*}
    \begin{aligned}
        \partial_m(\partial I_\lambda(\tau)) \colon \partial_m \tau
%        & = \frac{1}{\lambda} \sum_{ij} \Big[ \Big(\ddp{\tau_{ij}}{x_m}\Big)^2 
%        - \sum_{k l} \ddp{}{\tau_{kl}} [\pi_K(\tau)]_{ij} \ddp{\tau_{kl}}{x_m} \ddp{\tau_{ij}}{x_m} \Big]\\
        & = \frac{1}{\lambda} \big( |\partial_m \tau|_F^2 - \pi_K'(\tau;\partial_m\tau) : \partial_m\tau\big)\geq 0,
    \end{aligned}
    \end{equation*}
    which is \eqref{eq:weakderivYosi}. It is moreover easily seen that $\id - \pi_K: \Rnns \to \Rnns$ is 
    globally Lipschitz with Lipschitz constant 1, too. Thus, for every $A, B\in \Rnns$, there holds $|(\id-\pi_K)'(A;B)|_F \leq |B|_F$.
    Since $(\id - \pi_k)(0) = 0$, the Lipschitz continuity moreover entails $|(\id - \pi_k)(A)|_F \leq |A|_F$ for all $A\in \Rnns$.
    In view of \eqref{eq:yosidaweak}, this yields \eqref{eq:yosidaest}.
\end{proof}

The next lemma addresses the crucial regularity result for the stress field $\sigma_\lambda$ as solution of   
\begin{equation}\label{eq:flowRule_eYosida}
    \e - \mathbb{A}\boverdot{\sigma}_\lambda = \partial I_\lambda(\sigma_\lambda),
    \mediumspace \sigma_\lambda(0) = \sigma_0.
\end{equation}
In the proof of our main result in \cref{thm:main}, an optimal strain rate will play the role of $\e$ and the 
following regularity result will be essential for the construction of a recovery sequence associated with that strain rate.
The required regularity of $w$ will carry over to this optimal strain rate and represents the most restrictive 
assumption of our reverse approximation approach.

\begin{lemma}[Higher regularity of the stress field]\label{lem:w1p}
    Let $\lambda > 0$ be arbitrary and $\e \in L^2(\Lt^2(\Omega)) \cap L^1(\Wt^{1,p}(\Omega))$ with $p\geq 2$ be given. 
    Then \eqref{eq:flowRule_eYosida} 
	 admits a unique solution $\sigma_\lambda \in H^1(\Lt^2(\Omega))\cap L^\infty(\Wt^{1,p}(\Omega))$ and there holds
     \begin{equation}\label{eq:w1pbound}
        \|\sigma_\lambda\|_{L^\infty(\Wt^{1,p}(\Omega))}
        \leq C_p\Big(\|\e\|_{L^1(\Wt^{1,p}(\Omega))} + \|\sigma_0\|_{\Wt^{1,p}(\Omega)}^p\Big)
    \end{equation}	
    with $C_p := p \,\|\Ab\|^{p/2-1}$.
\end{lemma}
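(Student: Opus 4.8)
The plan is to establish well-posedness in the energy space first and then to bootstrap the spatial regularity by differentiating the — crucially, \emph{pointwise-in-$x$} — evolution equation. For the first part, rewrite \eqref{eq:flowRule_eYosida} as $\boverdot\sigma_\lambda=\Cb\big(\e-\partial I_\lambda(\sigma_\lambda)\big)$, $\sigma_\lambda(0)=\sigma_0$. This is an ordinary differential equation in the Hilbert space $\Lt^2(\Omega)$ whose right-hand side has at most linear growth and is globally Lipschitz in $\sigma_\lambda$, uniformly in $t$, with constant $\|\Cb\|/\lambda$ (recall that $\partial I_\lambda=\tfrac1\lambda(\id-\pi_{\KK(\Omega)})$ is $\tfrac1\lambda$-Lipschitz, see \cite[Proposition~55.2(b)]{zeidler3}), and it is an $L^1$-in-time datum since $\e\in L^2(\Lt^2(\Omega))$. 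Banach's fixed point theorem, exactly as for the regularized state system, therefore yields a unique $\sigma_\lambda\in H^1(\Lt^2(\Omega))$; this step is independent of $p$ and already provides uniqueness in the smaller class asserted in the lemma. Moreover, the solution operator $(\sigma_0,\e)\mapsto\sigma_\lambda$ is Lipschitz from $\Lt^2(\Omega)\times L^1(\Lt^2(\Omega))$ into $C(\Lt^2(\Omega))$.

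The heart of the matter is an \emph{a priori} estimate that I would first derive formally. Suppose a (sufficiently regular) symmetric-matrix field $g$ solves $\Ab\boverdot g=f-m$ with $m:g\geq0$ a.e. Multiplying by $\tfrac p2(\Ab g:g)^{p/2-1}\geq0$, integrating over $\Omega$, and using that $\Ab$ is symmetric and time-independent — so that $\tfrac{\d}{\d t}(\Ab g:g)^{p/2}=p\,(\Ab g:g)^{p/2-1}\,\Ab\boverdot g:g$ — gives
\[
  \frac{\d}{\d t}\int_\Omega(\Ab g:g)^{p/2}\,\d x\;\leq\;p\int_\Omega(\Ab g:g)^{p/2-1}\,f:g\,\d x .
\]
On the right I estimate $(\Ab g:g)^{p/2-1}\leq\|\Ab\|^{p/2-1}|g|_F^{p-2}$ (this is where $p\geq2$ enters) and $f:g\leq|f|_F\,|g|_F$, and then apply H\"older's inequality with the conjugate exponents $p'$ and $p$, obtaining
\[
  \frac{\d}{\d t}\int_\Omega(\Ab g:g)^{p/2}\,\d x\;\leq\;p\,\|\Ab\|^{p/2-1}\,\|g\|_{\Lt^p(\Omega)}^{p-1}\,\|f\|_{\Lt^p(\Omega)} .
\]
I would apply this once with $(g,f,m)=\big(\partial_i\sigma_\lambda,\,\partial_i\e,\,\partial_i(\partial I_\lambda(\sigma_\lambda))\big)$ — where $m:g\geq0$ is precisely \eqref{eq:weakderivYosi} of \cref{lem:yosida} — and once with $(g,f,m)=\big(\sigma_\lambda,\,\e,\,\partial I_\lambda(\sigma_\lambda)\big)$ — where $m:g=\partial I_\lambda(\sigma_\lambda):\sigma_\lambda\geq0$ because $\partial I_\lambda$ is monotone and $\partial I_\lambda(0)=0$ (recall $0\in\KK(\Omega)$). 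Integrating in time — here only $\e\in L^1(\Wt^{1,p}(\Omega))$ is used, and no Gr\"onwall argument is needed — identifying $\big(\int_\Omega(\Ab g:g)^{p/2}\,\d x\big)^{1/p}$ with the $\Lt^p(\Omega)$-norm up to the coercivity and boundedness constants of $\Ab$, and summing over $i=1,\dots,n$ then yields \eqref{eq:w1pbound}; the factor $p$ stems from the chain rule and $\|\Ab\|^{p/2-1}$ from the pointwise bound $(\Ab A:A)^{p/2-1}\leq\|\Ab\|^{p/2-1}|A|_F^{p-2}$.

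The step I expect to be the main obstacle is making the spatial differentiation rigorous: since $\e$ is only $L^1$ in time, $\sigma_\lambda$ is merely \emph{bounded} — not continuous — with values in $\Wt^{1,p}(\Omega)$, so \eqref{eq:flowRule_eYosida} cannot be differentiated in $x$ naively. The decisive point is that \eqref{eq:flowRule_eYosida} contains no spatial derivatives: it is the \emph{same} ODE at a.e.\ $x\in\Omega$, with data $\sigma_0(x)$ and $\e(\cdot,x)$. Hence, for $h\neq0$, $i\in\{1,\dots,n\}$ and $\Omega'\Subset\Omega$ with $\dist(\Omega',\partial\Omega)>|h|$, the difference quotient $\delta^i_h\sigma_\lambda(t,x):=h^{-1}\big(\sigma_\lambda(t,x+he_i)-\sigma_\lambda(t,x)\big)$ satisfies, a.e.\ on $(0,T)\times\Omega'$,
\[
  \Ab\,\partial_t(\delta^i_h\sigma_\lambda)=\delta^i_h\e-\delta^i_h\big(\partial I_\lambda(\sigma_\lambda)\big),\qquad\delta^i_h\sigma_\lambda(0)=\delta^i_h\sigma_0,
\]
and, since $\partial I_\lambda:\Rnns\to\Rnns$ is monotone — the same property of $\id-\pi_K$ that underlies \eqref{eq:weakderivYosi} — one has $\delta^i_h\big(\partial I_\lambda(\sigma_\lambda)\big):\delta^i_h\sigma_\lambda\geq0$ pointwise, so the a priori estimate above applies verbatim with $g=\delta^i_h\sigma_\lambda$ and $\Omega$ replaced by $\Omega'$. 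Using $\|\delta^i_h\sigma_0\|_{\Lt^p(\Omega')}\leq\|\partial_i\sigma_0\|_{\Lt^p(\Omega)}$ and $\|\delta^i_h\e\|_{\Lt^p(\Omega')}\leq\|\partial_i\e\|_{\Lt^p(\Omega)}$, the bound on $\|\delta^i_h\sigma_\lambda\|_{L^\infty(\Lt^p(\Omega'))}$ is uniform in $h$ and $\Omega'$; letting $|h|\to0$ and exhausting $\Omega$, the standard difference-quotient criterion gives $\partial_i\sigma_\lambda\in L^\infty(\Lt^p(\Omega))$ together with \eqref{eq:w1pbound}, whence $\sigma_\lambda\in L^\infty(\Wt^{1,p}(\Omega))$ (uniqueness in this class having been noted above). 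No spatial integration by parts and no control of boundary terms ever enters, which is exactly what makes the pointwise-in-$x$ structure so convenient.
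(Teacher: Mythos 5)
Your overall strategy is sound and it is a genuinely different route from the paper's. The paper proceeds by approximation: for data continuous in time with values in $\Wt^{1,p}(\Omega)$ it runs an explicit Euler scheme, propagates the $\Wt^{1,p}$-bound through the iterates via \eqref{eq:yosidaest} and a discrete Gronwall argument, passes to the limit, and then treats general $\e$ by smoothing in time, differentiating the equation in $x$ (legitimate because the smooth-data solutions already lie in $L^\infty(\Wt^{1,p}(\Omega))$) and testing with $(\Ab\partial_j\sigma:\partial_j\sigma)^{p/2-1}\partial_j\sigma$ together with \eqref{eq:weakderivYosi}. You avoid both the time discretization and the smoothing of $\e$ by exploiting that \eqref{eq:flowRule_eYosida} is the same ODE at a.e.\ $x$ (the elasticity tensor and $K$ being constant in space) and by replacing spatial derivatives with difference quotients; the sign condition then only needs the plain pointwise monotonicity of $\tfrac1\lambda(\id-\pi_K)$, which is even more elementary than \cref{lem:yosida}, since no chain rule for the projection in Sobolev spaces is required. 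Your differential inequality and the constant $C_p$ coincide with the paper's \eqref{eq:w1pest}, and the difference-quotient criterion recovers \eqref{eq:w1pbound}; this is a viable and arguably leaner argument.

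There is, however, one justification still missing, and it is not the one you flagged. To run the a priori estimate on $g=\delta_h^i\sigma_\lambda$ (or on $g=\sigma_\lambda$ itself) you differentiate $t\mapsto\int(\Ab g:g)^{p/2}\,\d x$; with only $\sigma_\lambda\in H^1(\Lt^2(\Omega))$ from your Step 1, this quantity need not even be finite for $p>2$, let alone absolutely continuous in time, so the inequality cannot be integrated as it stands, and the application with $g=\sigma_\lambda$ that is supposed to supply the missing $\Lt^p$-bound suffers from the same circularity. The gap closes with one standard extra step: since $\partial I_\lambda$ is induced by a $\tfrac1\lambda$-Lipschitz pointwise map on $\Rnns$ vanishing at $0$, it is globally Lipschitz on $\Lt^p(\Omega)$ as well, and with $\sigma_0\in\Wt^{1,p}(\Omega)\subset\Lt^p(\Omega)$ and $\e\in L^1(\Wt^{1,p}(\Omega))\subset L^1(\Lt^p(\Omega))$ the Picard--Lindel\"of argument in $\Lt^p(\Omega)$ yields a solution in $W^{1,1}(\Lt^p(\Omega))$, which coincides with $\sigma_\lambda$ by uniqueness in $\Lt^2(\Omega)$. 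Then $\delta_h^i\sigma_\lambda\in W^{1,1}(\Lt^p(\Omega'))$ for fixed $h$, the functional $g\mapsto\int_{\Omega'}(\Ab g:g)^{p/2}\,\d x$ is continuously differentiable on $\Lt^p(\Omega')$, the chain rule in time is legitimate, and the remainder of your argument (uniform-in-$h$ bound, passage $h\to0$, exhaustion of $\Omega$) goes through as you describe.
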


\begin{proof}
    \textit{Step~1.~Existence of solutions in $H^1(\Lt^2(\Omega))$:} 
    First we note that \eqref{eq:flowRule_eYosida} is just an ODE in $\Lt^2(\Omega)$ and $\partial I_\lambda$ is globally Lipschitz in $\Lt^2(\Omega)$. 
    Thus, the existence and uniqueness of solutions in $H^1(\Lt^2(\Omega))$ follows from the generalized Picard-Lindel\"of theorem in Banach spaces.
    However, a pointwise projection is in general not Lipschitz continuous in Sobolev spaces. 
    Therefore, we cannot apply this simple argument to show that the solution is an element of $W^{1,1}(\Wt^{1,p}(\Omega))$.

    \textit{Step~2.~Higher regularity in case of smooth data:}
	To prove this, we apply a time discretization scheme, namely the explicit Euler method.
   	At first we consider the case $\e \in C(\Wt^{1,p}(\Omega))$.
	For $N \in \mathbb{N}$ and $n \in \{ 0, ..., N \}$, we set $d_t^N := \frac{T}{N}$ and
	$t_n^N := nd_t^N$ such that $0 = t_0^N < t_1^N < ... < t_N^N = T$.
	Now define $\sigma_0^N := \sigma_0 \in \Wt^{1,p}(\Omega)$ and
	\begin{equation*}
		\sigma^N_n := \sigma^N_{n - 1}
		+ d_t^N\mathbb{C}\big(\e(t_{n-1}^N) - \partial I_\lambda(\sigma^N_{n - 1})\big)
		\in \Wt^{1,p}(\Omega) \quad \text{(by \cref{lem:yosida})}
	\end{equation*}
	such that
	\begin{equation}
	\label{eq:local0}
		\mathbb{A}\frac{\sigma^N_{n} - \sigma^N_{n - 1}}{d_t^N}
		+ \partial I_\lambda(\sigma^N_{n - 1}) = \e(t_{n - 1}^N)
	\end{equation}
	for all $N \in \mathbb{N}$ and $n \in \{ 1, ..., N \}$.
	We define the piecewise linear approximation $\sigma^N \in W^{1,\infty}(\Wt^{1,p}(\Omega))$ by
	\begin{equation*}
		\sigma^N(t) := \sigma_{n-1}^N + \frac{t - t_{n-1}^N}{d_t^N}(\sigma_{n}^N - \sigma_{n-1}^N)
	\end{equation*}
	and the piecewise constant approximation
	$\tilde{\sigma}^N \in L^\infty(\Wt^{1,p}(\Omega))$ by
	$\tilde{\sigma}^N(t) := \sigma_{n-1}^N$ for $t \in [t_{n-1}^N, t_{n}^N)$.
	Using \eqref{eq:yosidaest}, we deduce from \eqref{eq:local0} that 
	\begin{equation*}
		\norm{\sigma^N_n}{\Wt^{1,p}(\Omega)}
%		\leq \norm{\sigma^N_{n - 1}}{\Wt^{1,p}(\Omega)} \\
%		\largespace + d_t^N C (\norm{\sigma^N_{n - 1}}{\Wt^{1,p}(\Omega)}
%		+ \norm{\e}{C(\Wt^{1,p}(\Omega)})) \\
		\leq \norm{\sigma_0}{\Wt^{1,p}(\Omega)} 
		+ d_t^N C \Big{(} \sum_{i = 0}^{n - 1} \norm{\sigma^N_{i}}{\Wt^{1,p}(\Omega)} \Big{)}
		+ C \norm{\e}{C(\Wt^{1,p}(\Omega)},
	\end{equation*}
	which, together with the discrete Gronwall lemma (cf.\ \cite[Lemma 5.1 and the following remark]{heywoodRannacher}), shows that 
    $\sigma^N$is bounded in $L^\infty(\Wt^{1,p}(\Omega))$ by a constant	independent of $d_t^N$.
	Thus, again owing to \cref{eq:local0} and \eqref{eq:yosidaest}, 
	$\boverdot{\sigma}^N(t) = \frac{\sigma_n^N - \sigma_{n-1}^N}{d_t^N}$,
	$t \in (t_{n-1}^N, t_{n}^N)$ is also bounded in $L^\infty(\Wt^{1,p}(\Omega))$. Therefore, $\sigma^N$ is bounded in $H^1(\Lt^2(\Omega))$ and 
	consequently, there is a weakly converging subsequence, for simplicity also denoted by $\sigma^N$, such that $\sigma^N \weak \sigma$ in $H^1(\Lt^2(\Omega))$
    and $\sigma^N \weak^* \sigma$ in 	$L^\infty(\Wt^{1,p}(\Omega))$ as $N\to \infty$.  
    Note that, due to the reflexivity of $\Wt^{1,p}(\Omega)$, $L^\infty(\Wt^{1,p}(\Omega))$ can be identified 
    with the dual of $L^1(\Wt^{1,p'}(\Omega))$ so there is a weakly-$\ast$ converging subsequence. 
	It remains to show that $\sigma$ solves \eqref{eq:flowRule_eYosida}. 
    Since $\sigma^N$ is bounded in $W^{1,\infty}(\Wt^{1,p}(\Omega))$ as seen above, 
    we have by compact embeddings that $\sigma^N \to \sigma$ in $C(\Lt^2(\Omega))$.
    Thus, we find for the piecewise constant interpolation that, for every $t\in [t_{n-1}^N, t_n^N)$,
    \begin{equation*}
        \|\tilde\sigma^N(t) - \sigma(t)\|_{\Lt^2(\Omega)}
        \leq \|\sigma^N(t_{n-1}^N) - \sigma(t)\|_\Lt^2(\Omega) \to 0 \quad\text{as } N\to \infty.
    \end{equation*}
	Therefore, \eqref{eq:local0} and the Lipschitz continuity of $\partial I_\lambda$ in $\Lt^2(\Omega)$ give
	\begin{align*}
		& \norm{\mathbb{A} \boverdot{\sigma}^N + \partial I_\lambda (\sigma^N) - \e}{L^2(\Lt^2(\Omega))} \\
		& \leq \norm{\mathbb{A} \boverdot{\sigma}^N
		+ \partial I_\lambda (\tilde{\sigma}^N) - \tilde{\e}^N}{L^2(\Lt^2(\Omega))} 
		+ \norm{\partial I_\lambda (\sigma^N) - \partial I_\lambda (\tilde{\sigma}^N)}{L^2(\Lt^2(\Omega))}
		+ \norm{\tilde{\e}^N - \e}{L^2(\Lt^2(\Omega))} \\
		&\leq \frac{1}{\lambda} \norm{\sigma^N - \tilde{\sigma}^N}{L^2(\Lt^2(\Omega))}
		+ \norm{\tilde{\e}^N - \e}{L^2(\Lt^2(\Omega))} \rightarrow 0 \quad \text{as } N\to \infty,
	\end{align*}
    where $\tilde{\e}^N$ denotes the piecewise constant interpolation of $\e$, which converges strongly in $C(\Lt^2(\Omega))$ to $\e$ 
    thanks to the assumed regularity of $\e$. Therefore, by the weak lower semicontinuity of the $L^2(\Lt^2(\Omega))$-norm, we see 
    that the limit satisfies \eqref{eq:flowRule_eYosida}.	
    
    \textit{Step~3.~Higher regularity for nonsmooth data:}
	Let now $\e \in L^2(\Lt^2(\Omega)) \cap L^1(\Wt^{1,p}(\Omega))$ be arbitrary and take  a sequence $\{\e_n\} \subset C(\Wt^{1,p}(\Omega))$ 
	such that $\e_n \rightarrow \e$ in $L^1(\Wt^{1,p}(\Omega))$.
	Let $\sigma_\lambda \in H^1(\Lt^2(\Omega))$ be the solution of \eqref{eq:flowRule_eYosida} and denote by 
	$\sigma_{\lambda,n} \in H^1(\Lt^2(\Omega)) \cap L^\infty(\Wt^{1,p}(\Omega))$ the solution of
	\begin{align}
	\label{eq:local8}
		\e_n - \mathbb{A}\boverdot{\sigma}_{\lambda, n} = \partial I_\lambda(\sigma_{\lambda, n}),
		\mediumspace \sigma_{\lambda, n}(0) = \sigma_0.
	\end{align}
	Since $\partial I_\lambda : \Lt^2(\Omega)\rightarrow \Lt^2(\Omega)$ is
	monotone, one obtains $\sigma_{\lambda, n} \rightarrow \sigma_\lambda$ in $H^1(\Lt^2(\Omega))$ by standard arguments. 
	Moreover, \eqref{eq:local8} holds almost everywhere in time and space and so that, f.a.a.\ $t\in [0,T]$, 
    \begin{equation*}
        \partial_j \e_n(t) - \Ab\partial_j \boverdot{\sigma}_{\lambda, n}(t) = \partial_j \partial I_\lambda(\sigma_{\lambda, n})(t)
        \quad \text{a.e.\ in } \Omega.
    \end{equation*}
    follows. Testing this equation with 
    $((\Ab \partial_j \sigma_{\lambda, n} : \partial_j \sigma_{\lambda, n})^{p/2 - 1} \partial_j\sigma_{\lambda, n})(t) \in \Wt^{1,p'}(\Omega)$
    and using \eqref{eq:weakderivYosi} leads to
    \begin{equation}\label{eq:w1pest}
    \begin{aligned}
        & \frac{d}{d t}\int_\Omega (\Ab \partial_j \sigma_{\lambda, n} : \partial_j\sigma_{\lambda, n})^{p/2}\d x\\
        & \leq p \int_\Omega (\Ab \partial_j \sigma_{\lambda, n} : \partial_j \sigma_{\lambda, n})^{p/2 - 1} 
        \big(\Ab \partial_j \sigma_{\lambda, n} : \partial_j \boverdot \sigma_{\lambda, n} 
        + \partial_j \partial I_\lambda (\sigma_{\lambda, n}) : \partial_j \sigma_{\lambda, n} \big) \d x\\
        & = p \int_\Omega (\Ab \partial_j \sigma_{\lambda, n} : \partial_j \sigma_{\lambda, n})^{p/2 - 1} 
        \,\partial_j \e_n : \partial\sigma_n\,\d x\\
        & \leq C_p  \, \int_\Omega |\partial_j \e|_F \,|\partial_j\sigma_{\lambda, n}|_F^{p-1}\,\d x
        \leq C_p \, \|\e\|_{\Wt^{1,p}(\Omega)} \, \|\sigma_{\lambda, n}\|_{\Wt^{1,p}(\Omega)}^{p-1},
    \end{aligned}
    \end{equation}
    with $C_p$ as defined in the statement of the lemma.
    Integrating this inequality in time and taking the coercivity of $\Ab$ into account gives
    \begin{equation*}
        \|\sigma_{\lambda,n}\|_{L^\infty(\Wt^{1,p}(\Omega))}
        \leq C_p\Big(\|\e_n\|_{L^1(\Wt^{1,p}(\Omega))} + \|\sigma_0\|_{\Wt^{1,p}(\Omega)}^p\Big).
    \end{equation*}	
	Therefore, $\sigma_{\lambda,n}$ is bounded in $L^\infty(\Wt^{1,p}(\Omega))$ and we can select a weakly-$\ast$ converging 
	subsequence. The uniqueness of the weak limit then gives $\sigma \in L^\infty(\Wt^{1,p}(\Omega))$ as claimed.
	The estimate in \eqref{eq:w1pbound} finally follows from the above inequality and the lower semicontinuity 
	of the norm w.r.t.\ weak-$\ast$ convergence.
\end{proof}

\begin{remark}
    We observe that \eqref{eq:yosidaest}, \eqref{eq:flowRule_eYosida}, and the proven regularity of $\sigma_\lambda$ 
    even imply that $\sigma_\lambda \in W^{1,1}(\Wt^{1,p}(\Omega))$. However, we do not obtain an estimate independent 
    of $\lambda$ in this norm (in contrast to \eqref{eq:w1pbound}) and therefore, this additional regularity is not useful for us.
\end{remark}

\begin{lemma}[{\cite[Section~3]{paperAbstract}}]\label{lem:convYosida}
    Let $\e \in L^2(\Lt^2(\Omega))$ be given and $\lambda \searrow 0$. Then $\sigma_\lambda \rightarrow \sigma$ in $H^1(\Lt^2(\Omega))$,
    where $\sigma$ is the solution of 
    \begin{equation}\label{eq:flowRule_e}
        \e - \mathbb{A}\boverdot{\sigma} \in \partial I_{\KK(\Omega)}(\sigma),	\mediumspace \sigma(0) = \sigma_0.
    \end{equation}    
    Moreover, there holds
    \begin{equation}\label{eq:yosidaestC}
       	\norm{\sigma_\lambda - \sigma}{C(\Lt^2(\Omega))}^2
    	\leq \lambda\,\frac{\norm{\mathbb{C}}{}^2}{\gamma_\mathbb{C}}\,
    	\norm{\e - \mathbb{A}\boverdot{\sigma}}{L^2(\Lt^2(\Omega))}^2,
    \end{equation}
	 where $\gamma_\Cb>0$ is the coercivity constant of $\Cb$.
\end{lemma}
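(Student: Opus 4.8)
The plan is to transport \eqref{eq:flowRule_eYosida} and \eqref{eq:flowRule_e} into the Hilbert space $\Lt^2(\Omega)_\Ab$ and to combine a $\lambda$-uniform energy estimate with a monotonicity/lower-semicontinuity argument, and finally to derive \eqref{eq:yosidaestC} by a Gronwall-free comparison estimate. Multiplying \eqref{eq:flowRule_e} by $\Cb=\Ab^{-1}$ turns it into $\boverdot\sigma + \Cb\,\partial I_{\KK(\Omega)}(\sigma)\ni\Cb\e$, $\sigma(0)=\sigma_0$, and since $\Cb\,\partial I_{\KK(\Omega)}$ is precisely the subdifferential of $I_{\KK(\Omega)}$ with respect to $\scalarproduct{\cdot}{\cdot}{\Lt^2(\Omega)_\Ab}$ and $\sigma_0\in\KK(\Omega)=\operatorname{dom}I_{\KK(\Omega)}$, existence and uniqueness of $\sigma\in H^1(\Lt^2(\Omega))$ (with $\sigma(t)\in\KK(\Omega)$ for all $t$, using $H^1(\Lt^2(\Omega))\embed C(\Lt^2(\Omega))$) is the classical result on evolutions governed by convex subdifferentials; existence and uniqueness of $\sigma_\lambda$ was already recorded and is even simpler because $\partial I_\lambda$ is globally Lipschitz.

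The $\lambda$-uniform estimate comes from testing \eqref{eq:flowRule_eYosida} with $\boverdot\sigma_\lambda$: since $I_\lambda$ is continuously differentiable with derivative $\partial I_\lambda$, this yields $\scalarproduct{\Ab\boverdot\sigma_\lambda}{\boverdot\sigma_\lambda}{\Lt^2(\Omega)} + \tfrac{d}{dt}I_\lambda(\sigma_\lambda) = \scalarproduct{\e}{\boverdot\sigma_\lambda}{\Lt^2(\Omega)}$ a.e.\ in $(0,T)$. Integrating in time, using $I_\lambda(\sigma_0)=0$ (as $\sigma_0\in\KK(\Omega)$), $I_\lambda\geq 0$ and the coercivity of $\Ab$, gives $\norm{\boverdot\sigma_\lambda}{L^2(\Lt^2(\Omega))}\leq\gamma_\Ab^{-1}\norm{\e}{L^2(\Lt^2(\Omega))}$ uniformly in $\lambda$, hence a uniform bound on $\{\sigma_\lambda\}$ in $H^1(\Lt^2(\Omega))$ (they all start at $\sigma_0$). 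The same identity also bounds $\sup_t I_\lambda(\sigma_\lambda(t))$, i.e.\ $\sup_t\norm{\sigma_\lambda(t)-\pi_{\KK(\Omega)}(\sigma_\lambda(t))}{\Lt^2(\Omega)}^2\leq C\lambda$, which will give feasibility of the weak limit.

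Next I would pick a subsequence with $\sigma_\lambda\weak\bar\sigma$ in $H^1(\Lt^2(\Omega))$; then $\sigma_\lambda(t)\weak\bar\sigma(t)$ in $\Lt^2(\Omega)$ for every $t$, and since $\pi_{\KK(\Omega)}(\sigma_\lambda(t))\weak\bar\sigma(t)$ with all projections lying in the weakly closed set $\KK(\Omega)$, we get $\bar\sigma(t)\in\KK(\Omega)$ for all $t$. For the inclusion I use the elementary estimate $\scalarproduct{\partial I_\lambda(\sigma_\lambda(t))}{\tau-\sigma_\lambda(t)}{\Lt^2(\Omega)}\leq 0$ for every $\tau\in\KK(\Omega)$, obtained by splitting $\tau-\sigma_\lambda=(\tau-\pi_{\KK(\Omega)}(\sigma_\lambda))+(\pi_{\KK(\Omega)}(\sigma_\lambda)-\sigma_\lambda)$ and invoking the projection inequality; i.e.\ $\scalarproduct{\e-\Ab\boverdot\sigma_\lambda}{\tau-\sigma_\lambda}{\Lt^2(\Omega)}\leq 0$. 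Testing this with an arbitrary $\tau\in L^2(\Lt^2(\Omega))$ satisfying $\tau(t)\in\KK(\Omega)$ a.e., integrating over $(0,T)$ and taking $\liminf$ --- the only term requiring care being $\int_0^T\scalarproduct{\Ab\boverdot\sigma_\lambda}{\sigma_\lambda}{\Lt^2(\Omega)}\d t=\tfrac12\norm{\sigma_\lambda(T)}{\Lt^2(\Omega)_\Ab}^2-\tfrac12\norm{\sigma_0}{\Lt^2(\Omega)_\Ab}^2$, for which weak lower semicontinuity of $\norm{\cdot}{\Lt^2(\Omega)_\Ab}^2$ supplies the correct sign exactly as in \eqref{eq:flowlimit2} --- gives $\int_0^T\scalarproduct{\e-\Ab\boverdot{\bar\sigma}}{\tau-\bar\sigma}{\Lt^2(\Omega)}\d t\leq 0$. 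A measurable-selection argument (take $\tau=\tau_0$ on a measurable subset of $(0,T)$ and $\tau=\bar\sigma$ elsewhere, then let $\tau_0$ run through a countable dense subset of $\KK(\Omega)$) upgrades this to $\e(t)-\Ab\boverdot{\bar\sigma}(t)\in\partial I_{\KK(\Omega)}(\bar\sigma(t))$ for a.a.\ $t$; with $\bar\sigma(0)=\sigma_0$ and uniqueness of \eqref{eq:flowRule_e} this forces $\bar\sigma=\sigma$, and the usual subsequence argument extends the convergence to the full family.

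For strong convergence in $H^1(\Lt^2(\Omega))$ --- given weak convergence and a common initial value --- it suffices to show $\norm{\boverdot\sigma_\lambda}{L^2(\Lt^2(\Omega)_\Ab)}\to\norm{\boverdot\sigma}{L^2(\Lt^2(\Omega)_\Ab)}$. From the energy identity, $\int_0^T\scalarproduct{\Ab\boverdot\sigma_\lambda}{\boverdot\sigma_\lambda}{\Lt^2(\Omega)}\d t=\int_0^T\scalarproduct{\e}{\boverdot\sigma_\lambda}{\Lt^2(\Omega)}\d t-I_\lambda(\sigma_\lambda(T))\leq\int_0^T\scalarproduct{\e}{\boverdot\sigma_\lambda}{\Lt^2(\Omega)}\d t$, and the right-hand side converges to $\int_0^T\scalarproduct{\e}{\boverdot\sigma}{\Lt^2(\Omega)}\d t=\int_0^T\scalarproduct{\Ab\boverdot\sigma}{\boverdot\sigma}{\Lt^2(\Omega)}\d t$, the last equality because $\scalarproduct{\e-\Ab\boverdot\sigma}{\boverdot\sigma}{\Lt^2(\Omega)}=\tfrac{d}{dt}I_{\KK(\Omega)}(\sigma)=0$ (chain rule for convex functionals along $H^1$-curves, using $\sigma(t)\in\KK(\Omega)$); combined with weak lower semicontinuity this gives the asserted norm convergence. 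Finally, for \eqref{eq:yosidaestC} I subtract the two equations, test with $\sigma_\lambda-\sigma$, and use $\scalarproduct{\partial I_\lambda(\sigma_\lambda)}{\sigma-\sigma_\lambda}{\Lt^2(\Omega)}\leq-\lambda\norm{\partial I_\lambda(\sigma_\lambda)}{\Lt^2(\Omega)}^2$ (which holds since $\sigma(t)\in\KK(\Omega)$) together with the projection splitting $\scalarproduct{\e-\Ab\boverdot\sigma}{\sigma_\lambda-\sigma}{\Lt^2(\Omega)}\leq\lambda\norm{\e-\Ab\boverdot\sigma}{\Lt^2(\Omega)}\norm{\partial I_\lambda(\sigma_\lambda)}{\Lt^2(\Omega)}$, so that Young's inequality gives $\tfrac12\tfrac{d}{dt}\norm{\sigma_\lambda-\sigma}{\Lt^2(\Omega)_\Ab}^2\leq\tfrac\lambda4\norm{\e-\Ab\boverdot\sigma}{\Lt^2(\Omega)}^2$ --- no Gronwall needed; integrating from $0$ and converting the $\Lt^2(\Omega)_\Ab$-norm into the $\Lt^2(\Omega)$-norm via the norm-equivalence constants then yields \eqref{eq:yosidaestC}. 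I expect the limit passage in the inclusion --- controlling $\int_0^T\scalarproduct{\Ab\boverdot\sigma_\lambda}{\sigma_\lambda}{\Lt^2(\Omega)}\d t$ through weak lower semicontinuity and recovering the pointwise inclusion from its time-integrated form --- to be the main technical point; the remaining steps are routine energy computations.
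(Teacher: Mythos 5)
Your argument is correct, but it follows a genuinely different route from the paper. The paper's proof is essentially a reduction: it rewrites \eqref{eq:flowRule_eYosida} and \eqref{eq:flowRule_e} in the variables $z_\lambda = \E - \Ab\sigma_\lambda$, $z = \E - \Ab\sigma$ with $\E(t)=\int_0^t \e(s)\,\d s$ and then invokes the abstract results of \cite{paperAbstract} (existence from Theorem~3.3, convergence and the a priori estimate from Proposition~3.5), obtaining \eqref{eq:yosidaestC} from the bound on $\norm{\Ab(\sigma_\lambda-\sigma)}{C(\Lt^2(\Omega))}$ via $\norm{\sigma_\lambda-\sigma}{\Lt^2(\Omega)}\le\norm{\Cb}{}\norm{\Ab(\sigma_\lambda-\sigma)}{\Lt^2(\Omega)}$. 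You instead give a self-contained proof: the energy estimate from testing with $\boverdot\sigma_\lambda$, weak compactness in $H^1(\Lt^2(\Omega))$, feasibility of the limit via $\sup_t I_\lambda(\sigma_\lambda(t))\le C\lambda$, a Minty-type passage to the limit in the time-integrated variational inequality using exactly the lower-semicontinuity device of \eqref{eq:flowlimit2}, recovery of the pointwise inclusion by a measurable-selection/density argument, strong convergence via norm convergence of $\boverdot\sigma_\lambda$ in the $\Ab$-weighted norm (using the Br\'ezis chain rule to get $\scalarproduct{\e-\Ab\boverdot\sigma}{\boverdot\sigma}{\Lt^2(\Omega)}=0$ a.e.), and finally a direct comparison estimate for \eqref{eq:yosidaestC}. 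All steps check out; in particular your comparison estimate yields $\norm{\sigma_\lambda-\sigma}{C(\Lt^2(\Omega))}^2\le \tfrac{\lambda}{2\gamma_\Ab}\norm{\e-\Ab\boverdot\sigma}{L^2(\Lt^2(\Omega))}^2$, which implies the stated bound since $\gamma_\Ab\ge\gamma_\Cb/\norm{\Cb}{}^2$, so you even get a slightly sharper constant. What the paper's route buys is brevity and reuse of an already established abstract framework (including the exact constant as stated); what your route buys is independence from \cite{paperAbstract} and transparency about where each structural ingredient (monotonicity, $\sigma_0\in\KK(\Omega)$, the projection inequality) enters — at the cost of having to justify the standard but nontrivial tools you rely on (Br\'ezis' existence theorem for subdifferential evolutions with $\sigma_0\in D(I_{\KK(\Omega)})$ and the chain rule along $H^1$-curves), which you correctly identify and invoke.
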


\begin{proof}
    The assertion is proven in \cite{paperAbstract}, but, for convenience of the reader, we shortly sketch the arguments.
    First, observe that $\sigma_\lambda \in H^1(\Lt^2(\Omega))$ and $\sigma\in H^1(\Lt^2(\Omega))$ solve 
    \eqref{eq:flowRule_eYosida} and \eqref{eq:flowRule_e}, respectively, if and only if    
     $z_\lambda := \E - \mathbb{A}\sigma_\lambda$ and $z := \E - \mathbb{A}\sigma$ 
     with $\E(t):= \int_0^t\e(s)\d s$ solve
	\begin{alignat}{3}
		\boverdot{z}_\lambda &= \partial I_\lambda(\mathbb{C}\E - \mathbb{C}z_\lambda),
		&  \quad z_\lambda(0) &= z_0 := -\mathbb{A}\sigma_0	
   \intertext{and}
		\boverdot{z} &\in \partial I_{\KK(\Omega)}(\mathbb{C}\E - \mathbb{C}z),
		& \quad z(0) &= z_0,    
    \end{alignat}
    respectively. These equations are exactly of the form studied in \cite[Section~3]{paperAbstract} with the setting 
    $A := \partial I_{\KK(\Omega)}$, $Q = R := \Cb$, and $\ell := \E$.
	The existence of $\sigma$ in $H^1(\Lt^2(\Omega))$ then follows from \cite[Theorem 3.3]{paperAbstract},
	while the convergence $\sigma_\lambda \rightarrow \sigma$ in $H^1(\Lt^2(\Omega))$ as well as the estimate
	\begin{equation*}
    		\norm{\mathbb{A}(\sigma_\lambda - \sigma)}{C(\Lt^2(\Omega))}^2
    		\leq \frac{\lambda}{\gamma_\mathbb{C}}
    		\norm{\e - \mathbb{A}\boverdot{\sigma}}{L^2(\Lt^2(\Omega))}^2
  	\end{equation*}
	are consequences of \cite[Proposition 3.5]{paperAbstract}.
	(Note that $D(A) = \KK(\Omega)$ is closed and $A^0 \equiv 0$ in this case,
	hence, the assumptions in \cite[Section 2]{paperAbstract} are fulfilled).
	The inequality in \eqref{eq:yosidaestC} now follows easily using
	$\norm{\sigma_\lambda - \sigma}{\Lt^2(\Omega)} = \norm{\mathbb{C}\mathbb{A}(\sigma_\lambda - \sigma)}{\Lt^2(\Omega)}
	\leq \norm{\mathbb{C}}{} \norm{\mathbb{A}(\sigma_\lambda - \sigma)}{\Lt^2(\Omega)}$.
\end{proof}

\begin{remark}
    As a consequence of \eqref{eq:w1pbound}, the solution of \eqref{eq:flowRule_e} 
    is an element of $L^\infty(\Wt^{1,p}(\Omega))$, provided that $\e \in L^1(\Wt^{1,p}(\Omega))$.
    However, we do not need this regularity result for the upcoming analysis.
\end{remark}

As already mentioned, 
in the proof of our final convergence result in \cref{thm:main}, $\symnabla \boverdot{\overline{u}}$ will play the role 
of the function $w$, where $\overline{u}$ is an optimal solution of \eqref{eq:optprob}. 
This already indicates our most restrictive assumption, namely the existence of an optimal solution 
providing the high regularity required for $w$. We will come back to this point in \cref{rem:regassu}.

%%%%%%%%%%%%%%%%%%%%%%%%%%%%%%%%%%%%%%%%%%%%%%%%%%%%%%%%
\section{Convergence of Minimizers}\label{sec:convmin}

We are now in the position to state the regularized optimal control problems. 
Beside the additional control variable $\ell$ required for the reverse approximation, 
they differ from \eqref{eq:optprob} in an additional inequality constraint on the stress field, which 
is needed to improve the regularity of the stress in order to pass to the limit in the regularized state equation, 
see the proof of \cref{prop:convYosida} below. 
This additional regularity of the stresses is unfortunately not enough to pass to the limit in the state system. 
We additionally need to bound the displacement in $\UU$, since this is not guaranteed a priori by the regularized state system
itself, unless the loads fulfill a safe load condition. This however cannot be ensured for the loads arising 
in the construction of the recovery sequence in the proof of our main \cref{thm:main}
(at least, we were not able to verify it). Therefore, we directly enforce this boundedness by a special choice of the 
objective functional as a tracking type objective of the following form:
\begin{equation}\label{eq:objspecial}
    \Psi(u) := \int_0^T \|\symnabla \boverdot u(t) - \mu(t)\|_{\Mf(\Omega;\Rnns)}^2 
	+ \| \boverdot u(t) - v(t)\|_{\bL^1(\Omega)}^2\,\d t
\end{equation}
with a given desired strain rate $\mu\in L^2(\Lt^1(\Omega))$ and a desired displacement rate $v\in L^2(\bL^1(\Omega))$.
Note that this objective trivially fulfills the lower semicontinuity assumption in \eqref{eq:objlsc}.
One could even allow for less regular desired strain rates (in the space of measures), but for convenience, 
we restrict to functions in $L^2(\Lt^1(\Omega))$.
The regularized counterpart of \eqref{eq:optprob} now reads as follows:
\begin{equation}\tag{P\mbox{$_\lambda$}}\label{eq:optprobYosida}
\left\{\;
\begin{aligned}
	\min \quad & J_\lambda(u, u_D, \ell) := \|\symnabla \boverdot u- \mu\|_{L^2(\Lt^1(\Omega))}^2 
	+ \| \boverdot u - v\|_{L^2(\bL^1(\Omega))}^2 \\[-0.5ex]
	& \quad\qquad\quad\quad + \frac{\alpha}{2}\,\|u_D\|_{H^1(\bH^2(\Omega))}^2 
	 + \lambda^{-\theta} \|\ell\|_{L^2(\bH^{-1}_D(\Omega))}^2 + \|\boverdot\ell\|_{L^2(\bH^{-1}_D(\Omega))}^2\\
	\text{s.t.} \quad & u_D \in H^1(\bH^2(\Omega)), \quad \ell\in H^1(\bH^{-1}_D(\Omega)), \\
	& u_D(0)- u_0 \in \bH^1_D(\Omega), \quad \ell(0) = 0,\\
	&(u,\sigma, z) \in \UU \cap L^2(\bH^1(\Omega)) \times L^2(\Lt^2(\Omega)) \times H^1(\Lt^2(\Omega)),\\
	& (u,\sigma, z) \text{ is the solution of 
	\eqref{eq:regularizedStateEquation} w.r.t.\ $u_D$ and $\ell$,}\\
	&\|\boverdot\sigma\|_{L^2(\Lt^2(\Omega))} + \|\sigma\|_{L^s(\Wt^{1,p}(\Omega))} \leq R
\end{aligned}\right.
\end{equation}
with $0 < \theta < 1$ and
\begin{equation}\label{eq:indizes}
    p>n \quad \text{and} \quad  s>\max\Big\{1, \frac{2np}{np + 2(p-n)}\Big\}
\end{equation}
and $R \geq  \|\sigma_0\|_{\Wt^{1,p}(\Omega)}$ to be specified later, see \eqref{eq:Rbound} below.
With the exponents in \eqref{eq:indizes}, \cite[Lemma~4.2(i)]{MMR20} is applicable and tells us that 
$H^1(\Lt^2(\Omega)) \cap L^s(\Wt^{1,p}(\Omega))$ embeds compactly in $L^2(C(\bar\Omega;\Rnns))$, 
which will be useful at several places in the upcoming proofs. The term in the objective associated with $\theta$ will be 
used to force the additional loads to zero in the limit.    
    
\begin{proposition}
    For every $\lambda > 0$, there exists a globally optimal solution of \eqref{eq:optprobYosida}.
\end{proposition}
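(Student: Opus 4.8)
The plan is to run the classical direct method, in the spirit of the proof of \cref{thm:existenceOfAGlobalSolution}, but with additional care for the load variable $\ell$ and for the two new constraints of \eqref{eq:optprobYosida}, namely the pointwise state constraint on $\sigma$ and the higher regularity $u\in\UU\cap L^2(\bH^1(\Omega))$.

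\emph{Nonemptiness of the feasible set.} First I would check that the triple $(u,u_D,\ell)\equiv(u_0,u_0,0)$ (constant in time), together with $\sigma\equiv\sigma_0$ and $z\equiv\symnabla u_0-\Ab\sigma_0$, is admissible: indeed $-\div\sigma_0=0=\ell$ with $\ell(0)=0$, \eqref{eq:regstress} holds by definition of $z$, and $\boverdot z=0=\tfrac1\lambda(\sigma_0-\pi_{\KK(\Omega)}(\sigma_0))=\partial I_\lambda(\sigma_0)$ because $\sigma_0\in\KK(\Omega)$; the initial and boundary conditions as well as $u_D(0)-u_0\in\bH^1_D(\Omega)$ are trivial, and $z$ is deviatoric by the assumption $\tr(\symnabla u_0-\Ab\sigma_0)=0$. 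The assumed regularity $u_0\in\bH^2(\Omega)$ and $\sigma_0\in\Wt^{1,p}(\Omega)$ puts this triple in the required spaces, and $\|\boverdot\sigma\|_{L^2(\Lt^2(\Omega))}+\|\sigma\|_{L^s(\Wt^{1,p}(\Omega))}=T^{1/s}\|\sigma_0\|_{\Wt^{1,p}(\Omega)}\le R$ by the choice of $R$ in \eqref{eq:Rbound}. In particular $J_\lambda$ is finite at this point, since $\mu\in L^2(\Lt^1(\Omega))$, $v\in L^2(\bL^1(\Omega))$.

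\emph{A priori bounds.} Let $(u_n,\sigma_n,z_n,u_{D,n},\ell_n)$ be a minimizing sequence. As the tracking terms of $J_\lambda$ are nonnegative and its remaining terms are (squared) norms, $\{u_{D,n}\}$ is bounded in $H^1(\bH^2(\Omega))$ and $\{\ell_n\}$ in $H^1(\bH^{-1}_D(\Omega))$, while the state constraint together with $\sigma_n(0)=\sigma_0$ bounds $\{\sigma_n\}$ in $H^1(\Lt^2(\Omega))\cap L^s(\Wt^{1,p}(\Omega))$. From $\boverdot z_n=\partial I_\lambda(\sigma_n)$ and the pointwise estimate $\|\tau-\pi_{\KK(\Omega)}(\tau)\|_{\Lt^2(\Omega)}\le\|\tau\|_{\Lt^2(\Omega)}$ (valid since $0\in\KK(\Omega)$) one gets $\|\boverdot z_n\|_{L^2(\Lt^2(\Omega))}\le\lambda^{-1}\|\sigma_n\|_{L^2(\Lt^2(\Omega))}$, hence $\{z_n\}$ is bounded in $H^1(\Lt^2(\Omega))$ by $z_n(0)=z_0$. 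Therefore $\symnabla u_n=\Ab\sigma_n+z_n$ is bounded in $H^1(\Lt^2(\Omega))$, and since $u_n-u_{D,n}$ takes values in the closed subspace $\bH^1_D(\Omega)$, Korn's inequality on $\bH^1_D(\Omega)$ (available because $\Gamma_D$ has positive measure) bounds $\{u_n\}$ in $H^1(\bH^1(\Omega))$, hence in $\UU\cap L^2(\bH^1(\Omega))$.

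\emph{Limit passage and optimality.} Extracting a subsequence, $u_{D,n}\weak u_D$ in $H^1(\bH^2(\Omega))$, $\ell_n\weak\ell$ in $H^1(\bH^{-1}_D(\Omega))$, $\sigma_n\weak\sigma$ in $H^1(\Lt^2(\Omega))\cap L^s(\Wt^{1,p}(\Omega))$, $z_n\weak z$ in $H^1(\Lt^2(\Omega))$, and $u_n\weak u$ in $H^1(\bH^1(\Omega))$ (hence in $\UU$). The embeddings $H^1\embed C$ in time carry the initial conditions to the limits, and the closed conditions $u_D(0)-u_0\in\bH^1_D(\Omega)$, $\ell(0)=0$, $u-u_D\in L^2(\bH^1_D(\Omega))$ survive weak convergence; the linear relations $-\div\sigma_n=\ell_n$ and $\sigma_n=\Cb(\symnabla u_n-z_n)$ pass to the limit at once. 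For the nonlinear relation $\boverdot z_n=\partial I_\lambda(\sigma_n)$ I would use the compact embedding $H^1(\Lt^2(\Omega))\cap L^s(\Wt^{1,p}(\Omega))\embed\embed L^2(C(\bar\Omega;\Rnns))$ recalled after \eqref{eq:indizes} to get $\sigma_n\to\sigma$ strongly in $L^2(\Lt^2(\Omega))$; then, $\pi_{\KK(\Omega)}$ being $1$-Lipschitz on $\Lt^2(\Omega)$, $\partial I_\lambda(\sigma_n)\to\partial I_\lambda(\sigma)$ strongly in $L^2(\Lt^2(\Omega))$, and comparison with $\boverdot z_n\weak\boverdot z$ yields $\boverdot z=\partial I_\lambda(\sigma)$. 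Consequently $(u,\sigma,z)$ is the (unique) solution of \eqref{eq:regularizedStateEquation} for $(u_D,\ell)$; the state constraint persists by weak lower semicontinuity of the norm of $H^1(\Lt^2(\Omega))\cap L^s(\Wt^{1,p}(\Omega))$; and weak lower semicontinuity of the convex, continuous terms of $J_\lambda$ — using $\symnabla\boverdot u_n\weak\symnabla\boverdot u$ in $L^2(\Lt^1(\Omega))$ and $\boverdot u_n\weak\boverdot u$ in $L^2(\bL^1(\Omega))$ for the tracking parts — gives $J_\lambda(u,u_D,\ell)\le\liminf_n J_\lambda(u_n,u_{D,n},\ell_n)$, so $(u,u_D,\ell)$ is globally optimal.

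\emph{Main obstacle.} The nonroutine point is the bound on $u_n$ in $\UU\cap L^2(\bH^1(\Omega))$: it is not delivered by the objective (whose tracking terms only control $\symnabla\boverdot u_n$ and $\boverdot u_n$ in weak norms), so it has to be extracted from the state system via the $\lambda$-dependent Lipschitz bound for $\partial I_\lambda$ and Korn's inequality; relatedly, the limit passage in $\boverdot z_n=\partial I_\lambda(\sigma_n)$ genuinely hinges on the strong convergence of $\sigma_n$, which is guaranteed only through the compact embedding attached to the indices \eqref{eq:indizes} — this is precisely why the stress constraint is built into \eqref{eq:optprobYosida}.
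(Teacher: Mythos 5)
Your proof is correct and follows essentially the same direct-method argument as the paper: feasibility of the constant tuple $(u_0,\sigma_0,\symnabla u_0-\Ab\sigma_0,u_0,0)$, boundedness of the controls from the objective and of the stress from the inequality constraint, strong convergence of $\sigma_n$ via the compact embedding of $H^1(\Lt^2(\Omega))\cap L^s(\Wt^{1,p}(\Omega))$ into $L^2(C(\bar\Omega;\Rnns))$ to pass to the limit in $\partial I_\lambda(\sigma_n)$, and weak lower semicontinuity of $J_\lambda$. The only cosmetic difference is that you derive the a priori bound on $(u_n,\sigma_n,z_n)$ by hand (using the $\lambda^{-1}$-Lipschitz estimate for $\partial I_\lambda$ and Korn's inequality on $\bH^1_D(\Omega)$), whereas the paper simply invokes the global, $\lambda$-dependent Lipschitz continuity of the solution operator of \eqref{eq:regularizedStateEquation}.
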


\begin{proof}
    The proof is almost standard, except for a lack of compactness with regard to the control space.
    Let $(u_n, \sigma_n, z_n, u_{D,n}, \ell_n)$ be a minimizing sequence. As in the proof of \cref{thm:existenceOfAGlobalSolution}, 
    $(u, \sigma, z, u_D, \ell) \equiv (u_0, \sigma_0, \symnabla u_0 - \Ab \sigma_0, u_0, 0)$
    is feasible for \eqref{eq:optprobYosida}. Thus, $\{u_{D,n}, \ell_n\}$ is bounded in 
    $H^1(\bH^2(\Omega)) \times H^1(\bH^{-1}_D(\Omega))$. Hence the 
    Lipschitz continuity of the solution operator associated with \eqref{eq:regularizedStateEquation} implies that 
    $\{(u_n, \sigma_n, z_n)\}$ is bounded in $H^1(\bH^1(\Omega) \times \Lt^2(\Omega) \times \Lt^2(\Omega))$. 
    Therefore, there exist weakly convergent subsequences and we can pass to the limit in \eqref{eq:regularizedStateEquation} 
    except for the nonlinearity in $\partial I_\lambda$. However, the additional constraint on the stress 
    implies that $\sigma_n$ also converges weakly in $H^1(\Lt^2(\Omega)) \cap L^s(\Wt^{1,p}(\Omega))$, which is compactly 
    embedded in $L^2(C(\bar\Omega;\Rnns))$ as mentioned above. Thus $\{\sigma_n\}$ converges strongly in 
    $L^2(C(\bar\Omega;\Rnns))$, which allows to pass to the limit in $\partial I_\lambda(\sigma_n)$ 
    so that the weak limit solves \eqref{eq:regularizedStateEquation}. Moreover, the inequality constraint on $\sigma_n$ 
    is clearly weakly closed so that the weak limit is indeed feasible for \eqref{eq:optprobYosida}. 
    Since the objective is convex and continuous and thus weakly lower semicontinuous, the weak limit is also optimal.
\end{proof}

\begin{proposition}[Convergence of the Yosida regularization with varying loads]\label{prop:convYosida}
    Let $\{\lambda_n\}_{n\in \N}$ be a sequence converging to zero. Suppose moreover that two sequences 
    $\{\ell_n\} \subset H^1(\bH^{-1}_D(\Omega))$ and $\{u_{D,n} \} \subset H^1(\bH^1(\Omega))$ are given and 
    denote the solution of \eqref{eq:regularizedStateEquation} associated with $\lambda_n$, $\ell_n$, and $u_{D,n}$ by $(u_n, \sigma_n, z_n)$.
    Furthermore, we assume that $\{u_{D,n}\}$ satisfies the convergence properties in \eqref{eq:convdiri}, i.e., 
	\begin{equation}\label{eq:convdiri2}
    \begin{gathered}
		u_{D,n} \rightharpoonup u_D \;\text{ in } H^1(\bH^1(\Omega)), \quad
		u_{D,n} \rightarrow u_D \;\text{ in } L^2(\bH^1(\Omega)), \\
		u_{D,n}(T) \rightarrow u_D(T) \;\text{ in } \bH^1(\Omega),    
    \end{gathered}
	\end{equation}
    and that 
    \begin{alignat}{8}
        \ell_n &\weak 0 &\;& \text{ in } L^2(\bH^{-1}_D(\Omega)),  \quad & u_n &\weak u & \; & \text{ in } \UU,\\
        \sigma_n &\weak \sigma &\;& \text{ in } H^1(\Lt^2(\Omega)),  \quad & \sigma_n &\to \sigma 
        &\; & \text{ in } L^2(C(\bar\Omega;\Rnns).\label{eq:convsigma}
    \end{alignat}
    Then $(u, \sigma)$ is a weak solution associated with $u_D$.
\end{proposition}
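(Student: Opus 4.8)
The plan is to verify, one after another, the three defining properties of a weak solution from \cref{def:sol} — equilibrium and yield condition, the weak flow rule inequality \eqref{eq:flowruleweak}, and the initial conditions — using that $(u_n,\sigma_n,z_n)$ solves the regularized system \eqref{eq:regularizedStateEquation} together with the assumed convergences. The equilibrium condition and the initial conditions are immediate: $-\div$ is bounded and linear from $\Lt^2(\Omega)$ into $\bH^{-1}_D(\Omega)$, hence weak-to-weak continuous, so passing to the limit in $-\div\sigma_n=\ell_n$ (valid in $L^2(\bH^{-1}_D(\Omega))$) with $\sigma_n\weak\sigma$ in $H^1(\Lt^2(\Omega))$ and $\ell_n\weak0$ gives $-\div\sigma=0$, i.e.\ $\sigma(t)\in\EE(\Omega)$ for a.e.\ $t$; and $\sigma_n(0)=\sigma_0$, $u_n(0)=u_0$ together with the continuous embeddings $H^1(\Lt^2(\Omega))\embed C(\Lt^2(\Omega))$ and $H^1(\bL^{\frac{n}{n-1}}(\Omega))\embed C(\bL^{\frac{n}{n-1}}(\Omega))$ yield $\sigma(0)=\sigma_0$, $u(0)=u_0$.

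For the yield condition I would use that \eqref{eq:regsubdiff}--\eqref{eq:regstress} give $\boverdot z_n=\tfrac1{\lambda_n}(\sigma_n-\pi_{\KK(\Omega)}(\sigma_n))$ and $\boverdot z_n=\symnabla\boverdot u_n-\Ab\boverdot\sigma_n$. By Banach--Steinhaus, $u_n\weak u$ in $\UU$ and $\sigma_n\weak\sigma$ in $H^1(\Lt^2(\Omega))$ bound $\{\symnabla\boverdot u_n\}$ in $L^2_w(\Mf(\Omega;\Rnns))$ and $\{\boverdot\sigma_n\}$ in $L^2(\Lt^2(\Omega))\embed L^2_w(\Mf(\Omega;\Rnns))$, hence $\{\boverdot z_n\}$ is bounded there, so that $\|\sigma_n-\pi_{\KK(\Omega)}(\sigma_n)\|_{L^2(0,T;\Lt^1(\Omega))}=\lambda_n\|\boverdot z_n\|_{L^2_w(\Mf(\Omega;\Rnns))}\to0$. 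Combined with $\sigma_n\to\sigma$ in $L^2(C(\bar\Omega;\Rnns))\embed L^2(0,T;\Lt^1(\Omega))$ this forces $\pi_{\KK(\Omega)}(\sigma_n)\to\sigma$ in $L^2(0,T;\Lt^1(\Omega))$; an a.e.\ convergent subsequence and closedness of $K$ then give $\sigma(t)\in\KK(\Omega)$ for a.e.\ $t$.

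For the flow rule, the starting point is the regularized variational inequality: since $\partial I_{\lambda_n}$ is monotone and vanishes on $\KK(\Omega)$, one has $\int_0^T\langle\boverdot z_n(t),\sigma_n(t)-\tau(t)\rangle_{\Lt^2(\Omega)}\,\d t\ge0$ for every measurable $\tau$ with $\tau(t)\in\Sigma(\Omega)\cap\KK(\Omega)$ a.e.\ and $\div\tau\in L^2(0,T;\bL^n(\Omega))$. Inserting $\boverdot z_n=\symnabla\boverdot u_n-\Ab\boverdot\sigma_n$, splitting $\symnabla\boverdot u_n=\symnabla(\boverdot u_n-\boverdot u_{D,n})+\symnabla\boverdot u_{D,n}$ with $\boverdot u_n-\boverdot u_{D,n}\in L^2(\bH^1_D(\Omega))$, and integrating by parts in space via $-\div=(\symnabla)^*$ and $-\div\sigma_n=\ell_n$, I arrive at
\begin{equation*}
  0\le\int_0^T\langle\ell_n,\boverdot u_n-\boverdot u_{D,n}\rangle\,\d t+\int_0^T\!\!\int_\Omega\div\tau\cdot(\boverdot u_n-\boverdot u_{D,n})\,\d x\,\d t+\int_0^T\!\!\int_\Omega\symnabla\boverdot u_{D,n}:(\sigma_n-\tau)\,\d x\,\d t-\int_0^T\!\!\int_\Omega\Ab\boverdot\sigma_n:(\sigma_n-\tau)\,\d x\,\d t ,
\end{equation*}
where I keep the ``$\sigma_n-\tau$'' grouping on purpose for the localization below.

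The hard part is the first term, since $\ell_n$ and $\boverdot u_n-\boverdot u_{D,n}$ converge only weakly and the latter need not even be bounded in $L^2(\bH^1(\Omega))$ as $\lambda_n\to0$. The resolution is to use $\ell_n=-\div\sigma_n$ once more and write $\langle\ell_n,\boverdot u_n-\boverdot u_{D,n}\rangle=\int_\Omega(\sigma_n-\sigma):\symnabla(\boverdot u_n-\boverdot u_{D,n})\,\d x+\int_\Omega\sigma:\symnabla(\boverdot u_n-\boverdot u_{D,n})\,\d x$: the second integral vanishes for every $n$ since $\sigma(t)\in\EE(\Omega)$ and $\boverdot u_n(t)-\boverdot u_{D,n}(t)\in\bH^1_D(\Omega)$, and the first tends to zero since $\sigma_n\to\sigma$ in $L^2(C(\bar\Omega;\Rnns))$ while $\{\symnabla(\boverdot u_n-\boverdot u_{D,n})\}$ is bounded in $L^2_w(\Mf(\Omega;\Rnns))$ (bound the $x$‑integral by $\|\sigma_n-\sigma\|_{C(\bar\Omega)}$ times the total variation and apply Cauchy--Schwarz in time). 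The remaining three terms converge by routine weak/strong pairings: $\boverdot u_n\weak\boverdot u$ in $L^2(\bL^{\frac{n}{n-1}}(\Omega))$ and $\boverdot u_{D,n}\weak\boverdot u_D$ in $L^2(\bH^1(\Omega))$ against $\div\tau\in\bL^n(\Omega)$; $\symnabla\boverdot u_{D,n}\weak\symnabla\boverdot u_D$ in $L^2(\Lt^2(\Omega))$ against $\sigma_n-\tau\to\sigma-\tau$ strongly in $L^2(\Lt^2(\Omega))$; and $\boverdot\sigma_n\weak\boverdot\sigma$ in $L^2(\Lt^2(\Omega))$ together with $\sigma_n\to\sigma$ strongly in $L^2(\Lt^2(\Omega))$ for the last term. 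This leaves, for all admissible $\tau$,
\begin{equation*}
  0\le\int_0^T\!\!\int_\Omega\div\tau\cdot(\boverdot u-\boverdot u_D)\,\d x\,\d t+\int_0^T\!\!\int_\Omega\symnabla\boverdot u_D:(\sigma-\tau)\,\d x\,\d t-\int_0^T\!\!\int_\Omega\Ab\boverdot\sigma:(\sigma-\tau)\,\d x\,\d t .
\end{equation*}
Since $\sigma(t)\in\EE(\Omega)\cap\KK(\Omega)\subset\Sigma(\Omega)\cap\KK(\Omega)$ is itself an admissible test function, testing with $\tau=\chi_E\tau_0+\chi_{(0,T)\setminus E}\sigma$ for arbitrary measurable $E\subset(0,T)$ and $\tau_0$ ranging over a dense subset of $\Sigma(\Omega)\cap\KK(\Omega)$ (cf.\ \cref{lem:density}) localizes the inequality in time and yields the pointwise flow rule \eqref{eq:flowruleweak}, using $\div\sigma=0$ so that $\div\tau=\div(\tau-\sigma)$. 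Together with the preceding steps, this shows that $(u,\sigma)$ is a weak solution associated with $u_D$.
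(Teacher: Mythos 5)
Your proposal is correct and follows essentially the same route as the paper's proof: the same monotonicity-based starting inequality for the Yosida term, the same splitting via $u_{D,n}$ and spatial integration by parts, and the identical key device for the problematic term, namely replacing $\sigma_n$ by $\sigma_n-\sigma$ (which pairs to zero against symmetrized gradients of $\bH^1_D(\Omega)$-functions) and exploiting the strong convergence in $L^2(C(\bar\Omega;\Rnns))$ against the bound on the strain rates in $L^2_w(\Mf(\Omega;\Rnns))$ coming from the weak convergence in $\UU$. The only deviations are minor: you prove the yield condition $\sigma(t)\in\KK(\Omega)$ directly from $\lambda_n\boverdot z_n\to 0$ in $L^2(\Lt^1(\Omega))$ where the paper cites Lemmas~3.20 and 3.21 of the companion paper, you use the assumed strong convergence of $\sigma_n$ to pass to the limit in the term $\int\Ab\boverdot\sigma_n:\sigma_n$ instead of the weak lower semicontinuity argument the paper borrows from its earlier proposition, and you make explicit the localization in time that the paper leaves implicit.
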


\begin{proof}
    The arguments are similar to the proof of \cref{prop:contweak}. First, since $\sigma_n \weak \sigma$ in $H^1(\Lt^2(\Omega))$, 
    $\ell_n \weak 0$ in $L^2(\bH^{-1}_D(\Omega))$, and $-\div \sigma_n = \ell_n$ for all $n\in \N$, it follows that $\sigma(t) \in \EE(\Omega)$ 
    f.a.a.\ $t\in (0,T)$. Moreover, from Lemma~3.20 and 3.21 in our compaion paper \cite{meywal}, 
    we deduce that $\sigma(t) \in \KK(\Omega)$ a.e.\ in $(0,T)$, cf.~also the first part of the proof of \cite[Theorem~3.22]{meywal}.
    Moreover, due to $H^1(\bL^{\frac{n}{n-1}}(\Omega)) \embed C(\bL^{\frac{n}{n-1}}(\Omega))$ 
    and $H^1(\Lt^2(\Omega)) \embed C(\Lt^2(\Omega))$, the weak limit satisfies the initial conditions.

    To show the flow rule inequality, let $\tau \in L^2(\Lt^2(\Omega))$ 
    with $\tau(t) \in \Sigma(\Omega) \cap \mathcal{K}(\Omega)$ f.a.a.\ $t \in (0,T)$ be arbitrary. 
    Then, \eqref{eq:regstress} and \eqref{eq:regsubdiff} along with $I_n(a) = 0$ for $a \in \KK(\Omega)$ and $I_n \geq 0$, imply
    \begin{equation*}
	\begin{aligned}
		0 &= \int_0^T I_n(\tau(t)) dt \\
%		\geq \int_0^T I_n(\sigma_n(t)) dt
%		+ \scalarproduct{\boverdot{z}_n}{\tau - \sigma_n}{L^2(\Lt^2(\Omega))} \\
		&\geq	\scalarproduct{\symnabla \boverdot u_n - \mathbb{A}\boverdot\sigma_n}	{\tau - \sigma_n}{L^2(\Lt^2(\Omega))} \\
		&= \scalarproduct{\symnabla \boverdot u_{D,n} - \mathbb{A}\boverdot\sigma_n}	{\tau - \sigma_n}{L^2(\Lt^2(\Omega))}
		- \int_0^T\int_\Omega (\boverdot u_n  - \boverdot u_{D,n})\div \tau\,\d x \d t \\
		&\qquad + \scalarproduct{\symnabla \boverdot u_{D,n} - \symnabla \boverdot u_n}{\sigma_n}{L^2(\Lt^2(\Omega))}.
	\end{aligned}
	\end{equation*}
	Now one can pass to the limit with the first two terms on the right hand side exactly as described at the end of the proof 
	of \cref{prop:contweak}, see \eqref{eq:flowlimit} and \eqref{eq:flowlimit2}. Concerning the last term, we argue as follows:
	Since $\div \sigma = 0$ and $u_n$ satisfies the Dirichlet boundary condition, i.e., $\boverdot u_n = \boverdot u_{D,n}$ 
	on $\Gamma_D$, we obtain
	\begin{equation*}
	\begin{aligned}
        &\big|\scalarproduct{\symnabla \boverdot u_{D,n} - \symnabla \boverdot u_n}{\sigma_n}{L^2(\Lt^2(\Omega))}\big|\\
        &\qquad = \big|\scalarproduct{\symnabla \boverdot u_{D,n} - \symnabla \boverdot u_n}{\sigma_n - \sigma}{L^2(\Lt^2(\Omega))}\big|\\
        &\qquad \leq \|\symnabla \boverdot u_{D,n} - \symnabla \boverdot u_n\|_{L^2(\Lt^1(\Omega))}
        \|\sigma_n - \sigma\|_{L^2(C(\bar\Omega;\Rnns)} \to 0,
	\end{aligned}
	\end{equation*}
	thanks to the boundedness of $u_n$ in $\UU$ and the convergence of $\sigma_n$.
\end{proof}

The last step of the above proof illustrates, where the high regularity of the stress field enforced by the additional inequality 
constraint in \eqref{eq:optprobYosida} comes into play: we need the strong convergence of the stress in 
$L^2(C(\bar\Omega;\Rnns)$ in order to pass to the limit in the flow rule inequality. 
Unfortunately, the recovery sequence needs to be feasible for \eqref{eq:optprobYosida} and thus has to fulfill this 
inequality constraint, too.
Using our results from \cref{sec:reverse}, this  can be guaranteed, provided that there is at least one optimal solution, 
whose strain rate admits higher regularity. This is the most severe restriction for our main result:

\begin{theorem}[Approximation of global minimizers]\label{thm:main}
    Let the objective in \eqref{eq:optprob} be of the form \eqref{eq:objspecial}.
    Assume moreover that there exists a global minimizer
	$(\overline{u}, \overline{\sigma}, \overline{u}_D)$ of \cref{eq:optprob}
	such that $\symnabla\boverdot{\overline{u}} \in L^2(\Lt^2(\Omega)) \cap L^1(\Wt^{1,p}(\Omega))$
	and $\overline{u} - \overline{u}_D\in \bH^1_D(\Omega)$ for all $t\in(0,T)$. 
	Suppose in addition that $R$ in \eqref{eq:optprobYosida} is chosen so large that
	\begin{equation}\label{eq:Rbound}
	    R \geq \frac{1}{\gamma_\Ab}\,\|\overline{u}_D\|_{H^1(\bH^1(\Omega))} + 
	    p \,\|\Ab\|^{p/2-1} \Big(\|\symnabla \boverdot{\overline{ u}}\|_{L^1(\Wt^{1,p}(\Omega))} + \|\sigma_0\|_{\Wt^{1,p}(\Omega)}^p\Big).
	\end{equation}
	Furthermore, let $\{\overline u_\lambda, \overline \sigma_\lambda , \overline z_\lambda, \overline u_{D,\lambda},
	\overline \ell_\lambda \}_{\lambda > 0}$ 	be a sequence of global minimizers of \eqref{eq:optprobYosida} for $\lambda\searrow 0$.
	
	Then there exists an accumulation point of $\{\overline  u_\lambda, \overline \sigma_\lambda , \overline u_{D,\lambda} \}_{\lambda > 0}$
	w.r.t.\ weak convergence in $\UU \times H^1(\Lt^2(\Omega)) \cap L^s(\Wt^{1,p}(\Omega)) \times H^1(\bH^2(\Omega))$.
	Moreover, every such  accumulation point is a global minimizer of \cref{eq:optprob}.

    Furthermore, if $(\tilde u, \tilde\sigma, \tilde u_D)$ is such an accumulation point and 
    $\{\overline  u_\lambda, \overline \sigma_\lambda , \overline u_{D,\lambda} \}_{\lambda > 0}$ the associated sequence converging weakly to it, then
    \begin{alignat}{8}
        \overline{u} &\to \tilde u &\quad & \text{in } H^1(L^{1}(\Omega;\R^n)), 
         \quad & \overline{u}_{D,\lambda} & \to \tilde u_D & \quad & \text{in } H^1(\bH^2(\Omega)), \label{eq:strong1}\\
        \overline{\sigma}_\lambda & \to \tilde \sigma & \quad & \text{in } L^2(C(\bar\Omega;\Rnns)), 
        \quad & \overline{\ell}_\lambda &\to 0 &  \quad &  \text{in } H^1(\bH^{-1}_D(\Omega)). \label{eq:strong2}
    \end{alignat}
\end{theorem}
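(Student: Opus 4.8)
The plan is to run the direct method in three stages: (i) a priori bounds and extraction of accumulation points of the regularized minimizers, (ii) feasibility of the limit for \eqref{eq:optprob}, and (iii) construction of a recovery sequence realizing the optimal value — the one non-classical feature being that the recovery sequence must also produce a load $\ell$ vanishing at a controlled rate. I would begin with the recovery sequence. Starting from the assumed smooth minimizer $(\overline u,\overline\sigma,\overline u_D)$: since $\symnabla\boverdot{\overline u}\in L^2(\Lt^2(\Omega))$ and $\overline u-\overline u_D\in\bH^1_D(\Omega)$ for every $t$, Korn's inequality on $\bH^1_D(\Omega)$ upgrades $\overline u$ to $H^1(\bH^1(\Omega))$, so by \cref{cor:solutions} the pair $(\overline u,\overline\sigma)$ is in fact a \emph{strong} solution and $\overline\sigma$ solves \eqref{eq:flowRule_e} with $\e:=\symnabla\boverdot{\overline u}$. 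For $\lambda>0$ let $\sigma_\lambda^{r}\in H^1(\Lt^2(\Omega))\cap L^\infty(\Wt^{1,p}(\Omega))$ be the solution of \eqref{eq:flowRule_eYosida} with this $\e$ supplied by \cref{lem:w1p}, put $z_\lambda^{r}:=\symnabla\overline u-\Ab\sigma_\lambda^{r}$ and $\ell_\lambda^{r}:=-\div\sigma_\lambda^{r}$, and keep $\overline u$ and $\overline u_D$ as the displacement and control components. A short computation using $\div\sigma_0=0$ and $z_\lambda^{r}(0)=\symnabla u_0-\Ab\sigma_0$ shows $(\overline u,\sigma_\lambda^{r},z_\lambda^{r})$ is exactly the solution of \eqref{eq:regularizedStateEquation} for the controls $(\overline u_D,\ell_\lambda^{r})$, that $\ell_\lambda^{r}(0)=0$, and that the remaining regularity conditions in \eqref{eq:optprobYosida} hold.

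The decisive constraint is $\|\boverdot\sigma_\lambda^{r}\|_{L^2(\Lt^2(\Omega))}+\|\sigma_\lambda^{r}\|_{L^s(\Wt^{1,p}(\Omega))}\le R$. Testing \eqref{eq:flowRule_eYosida} with $\boverdot\sigma_\lambda^{r}$ and using $I_\lambda\ge0$, $I_\lambda(\sigma_0)=0$ gives $\|\boverdot\sigma_\lambda^{r}\|_{L^2(\Lt^2(\Omega))}\le\gamma_\Ab^{-1}\|\symnabla\boverdot{\overline u}\|_{L^2(\Lt^2(\Omega))}$ (equivalently bounded via \cref{lem:sigmabound}), and \eqref{eq:w1pbound} of \cref{lem:w1p} together with $L^\infty(\Wt^{1,p}(\Omega))\embed L^s(\Wt^{1,p}(\Omega))$ controls the second term; both bounds are $\lambda$-independent and are dominated by the right-hand side of \eqref{eq:Rbound}. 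This is precisely where the smoothness hypothesis $\symnabla\boverdot{\overline u}\in L^1(\Wt^{1,p}(\Omega))$ is used and cannot be avoided. For the value: by \cref{lem:convYosida} applied with $\e=\symnabla\boverdot{\overline u}$ (whose unregularized flow-rule solution is $\overline\sigma$) one has $\sigma_\lambda^{r}\to\overline\sigma$ in $H^1(\Lt^2(\Omega))$ and, by \eqref{eq:yosidaestC}, $\|\sigma_\lambda^{r}-\overline\sigma\|_{C(\Lt^2(\Omega))}^2\le C\lambda$; since $\div\overline\sigma=0$ and $\div\colon\Lt^2(\Omega)\to\bH^{-1}_D(\Omega)$ is bounded, $\ell_\lambda^{r}=-\div(\sigma_\lambda^{r}-\overline\sigma)$ satisfies $\|\ell_\lambda^{r}\|_{L^2(\bH^{-1}_D(\Omega))}^2\le C\lambda$ and $\|\boverdot\ell_\lambda^{r}\|_{L^2(\bH^{-1}_D(\Omega))}\to0$. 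Since $\theta<1$ this forces $\lambda^{-\theta}\|\ell_\lambda^{r}\|_{L^2(\bH^{-1}_D(\Omega))}^2\le C\lambda^{1-\theta}\to0$, while the first three terms of $J_\lambda$ evaluated at the recovery sequence equal $J(\overline u,\overline u_D)$; hence $J_\lambda\to J(\overline u,\overline u_D)=\min\eqref{eq:optprob}$ along the recovery sequence.

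Next I would treat the regularized minimizers $(\overline u_\lambda,\overline\sigma_\lambda,\overline z_\lambda,\overline u_{D,\lambda},\overline\ell_\lambda)$. Global optimality and the previous step bound $J_\lambda(\overline u_\lambda,\overline u_{D,\lambda},\overline\ell_\lambda)$, whence $\{\overline u_{D,\lambda}\}$ is bounded in $H^1(\bH^2(\Omega))$, $\{\overline\ell_\lambda\}$ in $H^1(\bH^{-1}_D(\Omega))$ with $\|\overline\ell_\lambda\|_{L^2(\bH^{-1}_D(\Omega))}^2\le C\lambda^\theta\to0$, $\Psi(\overline u_\lambda)$ is bounded — so $\int_0^T\|\boverdot{\overline u}_\lambda(t)\|_{\BD(\Omega)}^2\,\d t\le C$ with $\overline u_\lambda(0)=u_0$ — and the stress constraint bounds $\{\overline\sigma_\lambda\}$ in $H^1(\Lt^2(\Omega))\cap L^s(\Wt^{1,p}(\Omega))$. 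Along a subsequence: \cref{lem:convU} gives $\overline u_\lambda\weak\tilde u$ in $\UU$; reflexivity gives $\overline\sigma_\lambda\weak\tilde\sigma$ in $H^1(\Lt^2(\Omega))\cap L^s(\Wt^{1,p}(\Omega))$, and since this space embeds compactly into $L^2(C(\bar\Omega;\Rnns))$ (as noted after \eqref{eq:indizes}), $\overline\sigma_\lambda\to\tilde\sigma$ there; Aubin--Lions (with weak closedness of $\bH^1_D(\Omega)$) gives $\overline u_{D,\lambda}\weak\tilde u_D$ in $H^1(\bH^2(\Omega))$ satisfying \eqref{eq:convdiri} and $\tilde u_D(0)-u_0\in\bH^1_D(\Omega)$; and $\overline\ell_\lambda\to0$ in $L^2(\bH^{-1}_D(\Omega))$. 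These are exactly the hypotheses of \cref{prop:convYosida}, so $(\tilde u,\tilde\sigma)$ is a weak solution for $\tilde u_D$ and $(\tilde u,\tilde\sigma,\tilde u_D)$ is feasible for \eqref{eq:optprob}. Weak lower semicontinuity of the tracking and Tikhonov terms (via \eqref{eq:objlsc}) plus nonnegativity of the two load terms then yields $J(\tilde u,\tilde u_D)\le\liminf_\lambda J_\lambda(\overline u_\lambda,\overline u_{D,\lambda},\overline\ell_\lambda)\le\limsup_\lambda J_\lambda(\overline u,\overline u_D,\ell_\lambda^{r})=\min\eqref{eq:optprob}$; feasibility forces equality, so $(\tilde u,\tilde\sigma,\tilde u_D)$ is a global minimizer and $J_\lambda(\overline u_\lambda,\overline u_{D,\lambda},\overline\ell_\lambda)\to\min\eqref{eq:optprob}$.

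Finally, for the strong convergences I would split $J_\lambda(\overline u_\lambda,\overline u_{D,\lambda},\overline\ell_\lambda)\to\min\eqref{eq:optprob}=J(\tilde u,\tilde u_D)$ into its nonnegative summands: since each is weakly sequentially lower semicontinuous along the subsequence, each converges to the corresponding summand of $J(\tilde u,\tilde u_D)$. In particular $\|\boverdot{\overline\ell}_\lambda\|_{L^2(\bH^{-1}_D(\Omega))}\to0$, which with $\|\overline\ell_\lambda\|_{L^2(\bH^{-1}_D(\Omega))}\to0$ gives $\overline\ell_\lambda\to0$ in $H^1(\bH^{-1}_D(\Omega))$; and $\|\overline u_{D,\lambda}\|_{H^1(\bH^2(\Omega))}\to\|\tilde u_D\|_{H^1(\bH^2(\Omega))}$ together with $\overline u_{D,\lambda}\weak\tilde u_D$ in the Hilbert space $H^1(\bH^2(\Omega))$ gives $\overline u_{D,\lambda}\to\tilde u_D$ there; $\overline\sigma_\lambda\to\tilde\sigma$ in $L^2(C(\bar\Omega;\Rnns))$ was already extracted. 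The convergence $\overline u_\lambda\to\tilde u$ in $H^1(\bL^1(\Omega))$ I would derive from the termwise convergence of $\Psi(\overline u_\lambda)$, the weak-$*$ convergence $\symnabla\boverdot{\overline u}_\lambda\weak^*\symnabla\boverdot{\tilde u}$ in $L^2_w(\Mf(\Omega;\Rnns))$, and an Aubin--Lions compactness argument based on the compact embedding of $\BD(\Omega)$ into $\bL^1(\Omega)$; this is the most delicate part of the limit passage and relies on the particular tracking form of $\Psi$. The main obstacle of the whole proof, however, will be the recovery-sequence construction itself: the Dirichlet control alone cannot reproduce the strain of an arbitrary feasible state, so one is forced to add the auxiliary load $\ell_\lambda^{r}=-\div\sigma_\lambda^{r}$, and making it simultaneously admissible for \eqref{eq:optprobYosida}, small enough ($O(\lambda)$, via \eqref{eq:yosidaestC}) to survive the $\lambda^{-\theta}$-penalty, and compatible with the $L^s(\Wt^{1,p}(\Omega))$-bound on $\overline\sigma_\lambda$ is possible only through \cref{lem:w1p}, hence only for a target displacement with $\symnabla\boverdot{\overline u}\in L^1(\Wt^{1,p}(\Omega))$ — precisely the hypothesis of \cref{thm:main}.
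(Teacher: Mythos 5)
Your overall architecture is exactly that of the paper: bounds from the feasible constant tuple and the structure of \eqref{eq:objspecial}, extraction via \cref{lem:convU} and the compact embedding of $H^1(\Lt^2(\Omega))\cap L^s(\Wt^{1,p}(\Omega))$ into $L^2(C(\bar\Omega;\Rnns))$, feasibility of the limit via \cref{prop:convYosida}, the recovery sequence $(\overline u,\sigma_\lambda,z_\lambda,\overline u_D,\ell_\lambda)$ built from \cref{lem:w1p} and \cref{lem:convYosida} with $\ell_\lambda=-\div\sigma_\lambda$ and the $\lambda^{1-\theta}$ estimate from \eqref{eq:yosidaestC}, the sandwich inequality, and termwise convergence of the nonnegative summands for the strong convergences. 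Two points, however, deviate from the paper and are not closed as written. First, in checking the stress constraint for the recovery sequence you bound $\|\boverdot\sigma_\lambda\|_{L^2(\Lt^2(\Omega))}\le\gamma_\Ab^{-1}\|\symnabla\boverdot{\overline u}\|_{L^2(\Lt^2(\Omega))}$ and then assert this is ``dominated by the right-hand side of \eqref{eq:Rbound}''; but the first term budgeted in \eqref{eq:Rbound} is $\gamma_\Ab^{-1}\|\overline u_D\|_{H^1(\bH^1(\Omega))}$, and there is no general inequality $\|\symnabla\boverdot{\overline u}\|_{L^2(\Lt^2(\Omega))}\le\|\overline u_D\|_{H^1(\bH^1(\Omega))}$ (the plastic strain rate is not controlled by the boundary data in this way). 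The paper instead secures this constraint by asserting that $\sigma_\lambda$ obeys \eqref{eq:sigmadotbound} via \cref{lem:sigmabound}, which is what makes \eqref{eq:Rbound} fit; as written, your domination claim is unsubstantiated and either needs that route or a restatement of the bound you actually prove.

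Second, the mechanism you propose for $\overline u_\lambda\to\tilde u$ in $H^1(\bL^1(\Omega))$ cannot work: an Aubin--Lions argument based on the compact embedding $\BD(\Omega)\embed\bL^1(\Omega)$ can only yield strong convergence of $\overline u_\lambda$ itself (you have no bound on a further time derivative of $\boverdot{\overline u}_\lambda$), whereas \eqref{eq:strong1} requires strong convergence of the time derivatives $\boverdot{\overline u}_\lambda\to\boverdot{\tilde u}$ in $L^2(\bL^1(\Omega))$. The paper gets this from the termwise convergence you already established, i.e.\ $\|\boverdot{\overline u}_\lambda-v\|_{L^2(\bL^1(\Omega))}\to\|\boverdot{\tilde u}-v\|_{L^2(\bL^1(\Omega))}$, combined with the weak convergence $\boverdot{\overline u}_\lambda\weak\boverdot{\tilde u}$ in $L^2(\bL^{\frac{n}{n-1}}(\Omega))$ and the Kadec(--Klee)/Radon--Riesz property of this norm (weak convergence plus convergence of norms implies strong convergence in $L^1$-type spaces); replace your compactness step by this argument, exactly as is done for $\overline u_{D,\lambda}$ in the Hilbert space $H^1(\bH^2(\Omega))$. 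The remaining parts (feasibility of the accumulation point, $\overline\ell_\lambda\to0$ in $H^1(\bH^{-1}_D(\Omega))$, global optimality of the limit) are argued as in the paper and are fine.
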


\begin{proof}
	\emph{Step~1.~Existence of an accumulation point.}
	Since $\{\overline  u_\lambda, \overline \sigma_\lambda \overline{z}_\lambda, \overline u_{D,\lambda}, \overline \ell_\lambda \}_{\lambda > 0}$ is a global
	solution of \eqref{eq:optprobYosida} and the constant tuple 
	$(u, \sigma, z, u_D, \ell) \equiv (u_0, \sigma_0, \symnabla u_0 - \Ab \sigma_0, u_0, 0)$
	is feasible for \eqref{eq:optprobYosida}, we obtain
	\begin{equation}
	\label{eq:local1}
		J_\lambda(\overline u_\lambda, \overline u_{D,\lambda}, \overline \ell_\lambda)
	     \leq J_\lambda(u_0, u_0, 0) =  \frac{\alpha}{2} \norm{u_0}{L^2(\bH^2(\Omega))}^2 =: C < \infty.
	\end{equation}
    Since all $\overline u_\lambda$ share the same initial value and due to the special structure of the 
    objective	 in \eqref{eq:objspecial}, this implies that $\overline u_\lambda$ satisfies the boundedness assumption 
    in \eqref{eq:boundU} such that \cref{lem:convU} yields the existence of a subsequence converging weakly in $\UU$.
    Moreover, the inequality constraint on the stress and the $H^1(H^2)$-norm in the objective immediately yield 
    the boundedness of $\overline{\sigma}_\lambda$ and $\overline{u}_{D,\lambda}$ in their respective spaces, 
    and the reflexivity of the latter imply the existence of a weakly convergent subsequence.
	
	\emph{Step~2.~Feasibility of an accumulation point.}
	Let us now assume that a given subsequence of
	$\{ \overline{u}_\lambda, \overline{\sigma}_\lambda , \overline{u}_{D,\lambda} \}_{\lambda > 0}$,
	denoted by the same symbol for simplicity, converges weakly to $(\tilde{u}, \tilde{\sigma} ,\tilde{u}_{D})$ 
	in $\UU \times H^1(\Lt^2(\Omega)) \cap L^s(\Wt^{1,p}(\Omega)) \times H^1(\bH^2(\Omega))$.
    By the compact embedding of $H^1(\bH^2(\Omega))$ in $C(\bH^1(\Omega))$, this ensures the 
    convergence properties required in \eqref{eq:convdiri2} and in addition $\tilde u_D(0) - u_0 \in \bH^1_D(\Omega)$.
    Moreover, the assumptions on $p$ and $s$ in \eqref{eq:indizes} 
    guarantee that $H^1(\Lt^2(\Omega)) \cap L^s(\Wt^{1,p}(\Omega))$ embeds compactly in $L^2(C(\bar\Omega;\Rnns))$, 
    as already mentioned above, so that \eqref{eq:convsigma} is valid. 
	Furthermore, considering again \cref{eq:local1}, we see
	that $\lambda^{-\theta} \norm{\overline{\ell}_\lambda}{L^2(\bH^{-1}_D(\Omega))}$
	is bounded, hence, $\overline{\ell}_\lambda \rightarrow \ell = 0$ in $L^2(\bH^{-1}_D(\Omega))$ (even with strong convergence).
	Altogether, we observe that the convergence properties in \eqref{eq:convdiri2}--\eqref{eq:convsigma} are fulfilled
	such that \cref{prop:convYosida} yields that the weak accumulation point $(\tilde u, \tilde \sigma)$ is a weak solution 
	associated with $\tilde u_D$ and therefore feasible for the original optimization problem \eqref{eq:optprob}.
	
	\emph{Step ~3.~ Construction of a recovery sequence.}
	First, observe that, since $\overline{u}$ is assumed to be in $H^1(\bH^1(\Omega))$ and to satisfy the Dirichlet boundary conditions, 
    \cref{cor:solutions} gives that 	$(\overline{\sigma}, \overline{u})$ is a strong solution associated with $\overline{u}_D$.

	The recovery sequence for $(\overline{u}, \overline{\sigma}, \overline{u}_D)$ is constructed based on our findings in \cref{sec:reverse}. 
	To be more precise, we apply \cref{lem:w1p} and \cref{lem:convYosida} with $\e = \symnabla \boverdot{\overline u}$.
    According to these lemmas, $\sigma_\lambda \in H^1(\Lt^2(\Omega))$ defined as unique solution of   	
	\begin{equation*}
		\symnabla\boverdot{\overline{u}} - \mathbb{A}\boverdot{\sigma}_\lambda
		= \partial I_\lambda(\sigma_\lambda), \mediumspace \sigma_\lambda(0) = \sigma_0,
	\end{equation*}
	satisfies the bound in \eqref{eq:w1pbound} and converges strongly in $H^1(\Lt^2(\Omega))$ to $\sigma$, which is the solution to 
	\begin{equation*}
		\symnabla\boverdot{\overline{u}} - \mathbb{A}\boverdot{\sigma}
		\in \partial I_{\KK(\Omega)}(\sigma), \mediumspace \sigma(0) = \sigma_0.
	\end{equation*}
	This equation is just the strong form of the flow rule in \eqref{eq:flowrulestrong}.  
	The monotonicity of $\partial I_{\KK(\Omega)}$ immediately gives that \eqref{eq:flowrulestrong} is uniquely solvable.
	Therefore, the limit $\sigma$ coincides with $\overline{\sigma}$, i.e., the stress associated with $\overline{u}_D$.
	If we now define
	\begin{equation*}
	    z_\lambda := \symnabla \overline{u} - \Ab \sigma_\lambda\in H^1(\Lt^2(\Omega)) \quad \text{and} \quad 
	    \ell_\lambda := -\div \sigma_\lambda \in H^1(\bH^{-1}_D(\Omega)),
	\end{equation*}
    then we observe that $(\overline{u}, \sigma_\lambda, z_\lambda)$ is the solution of the regularized plasticity system in 
    \eqref{eq:regularizedStateEquation} w.r.t.\ $\overline{u}_D$ and $\ell_\lambda$. In addition, 
    we have $\ell_\lambda(0) = -\div \sigma_0 = 0$ and 
    $\overline{u}_D(0) - u_0 = \overline{u}_D(0) - \overline{u}(0) \in \bH^1_D(\Omega)$. Therefore, since $\sigma_\lambda$ 
    satisfies the bounds in \eqref{eq:w1pbound} and \eqref{eq:sigmadotbound} (by \cref{lem:sigmabound}), 
    $(\overline{u}, \sigma_\lambda, z_\lambda, \overline{u}_D, \ell_\lambda)$ satisfies all constraints in \eqref{eq:optprobYosida}.
    
    Next we show the convergence of the objective functional.
    As $\overline{\sigma}$ fulfills the equilibrium condition, i.e., $\overline{\sigma} \in \EE(\Omega)$, the convergence of $\sigma_\lambda$ 
    by \cref{lem:convYosida} implies
    \begin{equation*}
        \ell_\lambda  = -\div \sigma_\lambda \to  - \div \overline{\sigma} = 0 \quad \text{in } H^1(\bH^{-1}_D(\Omega)).
    \end{equation*}
    Furthermore, \eqref{eq:yosidaestC} gives
    \begin{equation*}
    \begin{aligned}
        \lambda^{-\theta} \|\ell_\lambda\|_{L^2(\bH^{-1}_D(\Omega))}^2
        &= \lambda^{-\theta} \,\|\div \sigma_\lambda - \div \overline{\sigma}\|_{L^2(\bH^{-1}_D(\Omega))}^2 \\
        &\leq C\, \lambda^{-\theta} \,\|\sigma_\lambda - \overline{\sigma}\|_{L^2(\Lt^2(\Omega))}^2 \\
        &\leq C \,\lambda^{1-\theta}\,	\norm{\symnabla\boverdot{\overline{u}} - \mathbb{A}\boverdot{\overline\sigma}}{L^2(\Lt^2(\Omega))}^2
        \to 0 \quad \text{as }\lambda \searrow 0.
    \end{aligned}
    \end{equation*}
    To summarize, we found that  $(\overline{u}, \sigma_\lambda, z_\lambda, \overline{u}_D, \ell_\lambda)$ is feasible 
    for \eqref{eq:optprobYosida} and fulfills 
	\begin{equation}
		J_\lambda(\overline{u}, \overline{u}_D, \ell_\lambda) \rightarrow J(\overline{u}, \overline{u}_D).	
	\end{equation}
	
	\emph{Step~4.~Strong convergence and global minimizer.}
	The feasibility and the convergence of the recovery sequence and the optimality of $(\overline{u}_\lambda, \overline{u}_{D,\lambda}, 
	\overline{\ell}_\lambda)$ give
	\begin{equation}\label{eq:local5}
    \begin{aligned}
		J(\tilde{u}, \tilde{u}_D)
		&\leq \liminf_{\lambda \searrow 0} J(\overline{u}_\lambda, \overline{u}_{D,\lambda}) \\
		& \leq \limsup_{\lambda \searrow 0} J(\overline{u}_\lambda, \overline{u}_{D,\lambda}) \\
		&\leq \limsup_{\lambda \searrow 0} J_\lambda(\overline{u}_\lambda, \overline{u}_{D,\lambda}, \overline{\ell}_\lambda)
		\leq \limsup_{\lambda \searrow 0} J_\lambda(\overline{u}, \overline{u}_{D}, \ell_\lambda) 
		= J(\overline{u}, \overline{u}_D),
    \end{aligned}
    \end{equation}	
    which, together with the feasibility of $(\tilde{u}, \tilde{\sigma}, \tilde{u}_D)$ for \eqref{eq:optprob} shown in step~2,
    implies that $(\tilde{u}, \tilde{\sigma}, \tilde{u}_D)$ is a global minimizer of \cref{eq:optprob}.
    
    To show the strong convergence in \eqref{eq:strong1} and \eqref{eq:strong2}, we first observe that 
    \eqref{eq:local5} yields $J(\overline{u}_\lambda, \overline{u}_{D,\lambda}) \rightarrow J(\tilde{u}, \tilde{u}_D)$, 
	from which we deduce the convergence of the norms $\|\boverdot{\overline u}_\lambda\|_{L^2(\bL^1(\Omega))}$ 
	and $\|\overline{u}_{D,\lambda}\|_{H^1(\bH^2(\Omega))}$ to 
	$\|\boverdot{\tilde u}\|_{L^2(\bL^1(\Omega))}$ and $\|\tilde u_{D}\|_{H^1(\bH^2(\Omega))}$, respectively. 
	Since both norms are Kadec norms and we already have weak convergence in the respective spaces, 
	this implies \eqref{eq:strong1}.
    Similarly, \eqref{eq:local5} yields $\|\overline{\ell}_\lambda\|_{H^1(\bH^{-1}_D(\Omega))} \to 0$. 
    Finally, the strong convergence of the stresses follows from the compact embedding of 
    $H^1(\Lt^2(\Omega)) \cap L^s(\Wt^{1,p}(\Omega))$ in $L^2(C(\bar\Omega;\Rnns))$, already used above.
\end{proof}

Some comments concerning our approximation result are in order:

\begin{remark}[Crucial regularity assumption]\label{rem:regassu}
        The assumption of existence of a global minimizer $(\overline{u}, \overline{\sigma}, \overline{u}_D)$ 
        with the properties listed in \cref{thm:main} is admittedly very restrictive. Notice in particular that the regularity assumptions 
        on $\overline{u}$ imply that $(\overline{u}, \overline{\sigma})$ is a \emph{strong solution} w.r.t.\ $\overline{u}_D$, 
        whose existence can in general not be guaranteed. 
        The regularity assumption however seems to be indispensable, as the above proof demonstrates:
        In order to pass to the limit in the flow rule inequality to show the feasibility of an accumulation point in step~2 of the proof, 
        we need the additional regularity of the stress ensured by the inequality constraint in \eqref{eq:optprobYosida}. 
        The generic regularity of the stress, which is $H^1(\Lt^2(\Omega))$ (see \cref{lem:sigmabound}), 
        is by far not sufficient for this passage to the limit. It therefore appears to be unavoidable to enforce the required regularity 
        by additional inequality constraints in \eqref{eq:optprobYosida}. The elements of the  recovery sequence however have to be feasible
        for \eqref{eq:optprobYosida} and thus have to fulfill this inequality constraint, too. 
        As the generic regularity of the stress is $H^1(\Lt^2(\Omega))$, it is not possible to guarantee this constraint to be fulfilled 
        without further hypotheses on the recovery sequence and its limit, respectively. At the end, this leads to the regularity assumption 
        on $\symnabla \boverdot{\overline{u}}$.

        We however emphasize that we do not require the existence of a strong 
        solution with the addition regularity of the strain rate
        for every Dirichlet displacement $u_D \in H^1(\bH^2(\Omega))$ (which would really be unrealistic),
        but only for one optimal $\overline{u}_D$. 
        (Of course, there might be many optimal solutions, since \eqref{eq:optprob} is a non-convex problem).
        Whether an optimal solution fulfilling these regularity assumptions exists or not, clearly depends on the data, 
        especially on the smoothness of the desired strain rate $\mu$ in \eqref{eq:objspecial}.
\end{remark}

\begin{remark}[Extensions and modifications of the approximation result]\label{rem:blablablub}
    \begin{enumerate}
%        \item [(i)]  Analogously to the fourth step of the proof of \cref{thm:main}, the convergence of the objective in 
%        \eqref{eq:local5} also impies that 
%        $\|\symnabla \boverdot{\overline{u}}_\lambda \|_{L^2(\Lt^1(\Omega))} \to 
%        \int_0^T \|\symnabla \boverdot{\tilde u}\|_{\Mf(\Omega;\Rnns)}\d t$, which, together with the 
%        weak-$\ast$ convergence is known as \emph{intermediate convergence}, cf.\ \cite{...}.
        \item[(i)] One essential drawback of the approximation result is that the bound $R$ given in \eqref{eq:Rbound} 
        depends on the unknown solution $(\overline{u}, \overline{u}_D)$ and is therefore in general unknown, too. 
        One could replace the inequality constraints on the stress involving this bound in \eqref{eq:optprobYosida} 
        by an additional tracking term in the objective of the form 
        $\|\sigma - \sigma_d\|_{H^1(\Lt^2(\Omega))}^2 + \|\sigma - \sigma_d\|_X$ with a given 
        desired stress distribution $\sigma_d$ and a reflexive Banach space $X$ with the following properties:
        On the one hand, $H^1(\Lt^2(\Omega)) \cap X$ should compactly embed in $L^2(C(\bar\Omega;\Rnns))$.
        On the other hand, $H^1(\Lt^2(\Omega)) \cap L^\infty(\Wt^{1,p}(\Omega))$ should compactly be embedded in $X$. 
        Provided these embeddings hold, the steps 2 and 3 of the previous proof can easily be adapted. 
        At this point, one benefits from the strong convergence of the recovery sequence in $H^1(\Lt^2(\Omega))$ by \cref{lem:convYosida}.
%        An example for the space $X$ is for instance ...\todo{das ist schwierig. Wahrscheinlich $X = L^m(W^{s,q})$ mit passenden
%        $m<\infty$, $s\in (0,1)$, $d<q<p$. Referenz dafuer?? ...
%         Alternativ Interpolationsraum + Reiterationtheorem}
        \item[(ii)] The above analysis is restricted to objectives of the type \eqref{eq:objspecial} or other types
        of objectives ensuring the boundedness of $\{\overline{u}_\lambda\}$ in $\UU$. This bound cannot be deduced 
        from the regularized plasticity system in \eqref{eq:regularizedStateEquation} unless the loads fulfill a 
        \emph{safe load condition}, see \cite{suquet}. One could thus allow for more general objectives, if 
        a safe load condition would be included in the set of constraints in \eqref{eq:optprobYosida}. 
        We were however not able to find a safe load condition that is satisfied by the loads associated with the recovery sequence.
        This is due to several reasons, among these a lack of regularity of the recovery sequence. This issue is subject to future research.
        \item[(iii)] By contrast, it is well possible to consider objectives, which give the boundedness of 
        the displacement in more regular spaces such as $H^1(\bH^1(\Omega))$. In this case, the inequality constraints on the stress in 
        \eqref{eq:optprobYosida} can be weakened or even be completely left out, since the higher regularity of the displacement 
        enables the passage to the limit at the end of the proof of \cref{lem:convYosida}. Such a setting is 
        treated in \cite{walther}.
        \item[(iv)] 	We have chosen the space $H^1(\bH^2(\Omega))$ as the 
    	control space for the Dirichlet displacement in order to guarantee the compact embeddings in step~2 of the above proof 
    	and in the proof of \cref{thm:existenceOfAGlobalSolution}. 
    	Of course, one might want to avoid the $\bH^2(\Omega)$-norm in the objective, which 
    	could be achieved by an additional (pseudo-)force-to-Dirichlet-map, for example by solving an additional
    	linear elasticity system. This strategy was employed in \cite[Subsection~6.1]{meywal}.
    \end{enumerate}
\end{remark}

\begin{remark}[Numerical treatment of \eqref{eq:optprobYosida}]
    Although they are still nonsmooth optimization problems, 
    the regularized problems in \eqref{eq:optprobYosida} offer ample possibilities for a numerical treatment. 
    A popular strategy is to further regularize the problem by smoothing the Yosida approximation $\partial I_\lambda$. 
    This has been used for the numerical computations in the companion paper \cite{meywal}. Moreover, 
    the non-smooth objective in \eqref{eq:optprobYosida} calls for an additional regularization of the $L^1$-norms 
    for instance in terms of a Huber-regularization. In this way, one ends up with a smooth optimal control problem, which 
    can be treated by the classical adjoint approach. Our convergence result in \cref{thm:main} implies that, 
    under the certainly restrictive assumptions of this theorem, there is an 
    optimal solution of the original optimization problem governed by the perfect plasticity system that 
    can be approximated by this procedure.
\end{remark}

\section*{Acknowledgments}
We are very grateful to Hannes Meinlschmidt (RICAM Linz) for various fruitful discussions on regularity issues 
associated with the construction of the recovery sequence.

%%%%%%%%%%%%%%%%%%%%%%%%%%%%%%%%%%%%%%%%%%%%%%%%%%%
\begin{appendix}

\section{Auxiliary results}

\begin{lemma}\label{lem:density}
    Let $\Omega\subset \R^n$ be a bounded domain and
    $X$ be a Banach space and $A\subset X$ be a convex and closed set with $0 \in A^\circ$. 
    Set $\AA (\Omega):= \{v\in L^2(\Omega;X) : v\in A \text{ a.e.~in }\Omega\}$. Then 
    $C^\infty_c(\Omega;X)\cap \AA(\Omega)$ is dense in $\AA(\Omega)$.
\end{lemma}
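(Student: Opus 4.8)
The plan is to prove the density statement by a two-step mollification argument: first translate/truncate so that the approximation has the right support properties, then mollify so that it becomes smooth, checking at each step that we stay inside $\AA(\Omega)$ by exploiting convexity and $0\in A^\circ$.

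\textbf{Step 1: Reduce to functions vanishing near $\partial\Omega$.} Let $v\in\AA(\Omega)$ be given and fix $\varepsilon>0$. Pick a ball $\overline{B_X(0;r)}\subset A$ with $r>0$, which exists since $0\in A^\circ$. Since $A$ is convex and contains both $0$ and $v(x)$ for a.e.\ $x$, the function $(1-\delta)v$ takes values in $A$ a.e.\ for every $\delta\in(0,1)$, and $(1-\delta)v\to v$ in $L^2(\Omega;X)$ as $\delta\to0$ by dominated convergence. Hence we may assume from now on that $v$ takes values in the slightly shrunk set; more precisely, after this reduction $v(x)$ lies a.e.\ in the convex set $A_\delta:=(1-\delta)A$, which satisfies $\operatorname{dist}(A_\delta,\partial A)\ge \delta r$ in the sense that $A_\delta + \overline{B_X(0;\delta r)}\subset A$ (again by convexity: if $a\in A$ and $\|b\|_X\le r$ then $(1-\delta)a+\delta b\in A$). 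Next, multiply by a cutoff: choose $\chi\in C^\infty_c(\Omega)$ with $0\le\chi\le1$ and $\chi\equiv1$ on a large compact subset, so that $\chi v\to v$ in $L^2(\Omega;X)$; since $0\in A$ and $A$ is convex (star-shaped w.r.t.\ $0$), $\chi(x)v(x)\in A_\delta\cup\{0\}\subset A$ a.e., so $\chi v\in\AA(\Omega)$ and has compact support in $\Omega$.

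\textbf{Step 2: Mollify.} Let $w:=\chi v\in L^2(\Omega;X)$ with $\operatorname{supp} w\Subset\Omega$, extended by $0$ to $\R^n$, and with $w(x)\in A_\delta$ on $\operatorname{supp}\chi$ (and $=0$ elsewhere, but $0\in A_\delta$ as well since $0\in A$). Let $\eta_\kappa$ be a standard nonnegative mollifier with $\int\eta_\kappa=1$ and $\operatorname{supp}\eta_\kappa\subset B(0;\kappa)$. For $\kappa$ small enough that $\operatorname{supp} w + B(0;\kappa)\Subset\Omega$, the convolution $w_\kappa:=\eta_\kappa * w$ belongs to $C^\infty_c(\Omega;X)$ (Bochner convolution of an $L^2$ function with a smooth compactly supported scalar kernel is smooth and compactly supported), and $w_\kappa\to w$ in $L^2(\Omega;X)$ as $\kappa\to0$. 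The crucial point is that $w_\kappa(x)=\int\eta_\kappa(y)\,w(x-y)\,\d y$ is an average (a "continuous convex combination") of values $w(x-y)$, all lying in the convex set $\overline{A_\delta}\subset A$; since $\overline{A_\delta}$ is closed and convex, $w_\kappa(x)\in\overline{A_\delta}\subset A$ for every $x$. (The measurability/integrability needed to make this Jensen-type argument rigorous follows from $w\in L^2$ and closedness of $A$; alternatively one invokes that closed convex sets are intersections of closed half-spaces and checks the inequality defining each half-space passes through the integral.) Hence $w_\kappa\in C^\infty_c(\Omega;X)\cap\AA(\Omega)$.

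\textbf{Conclusion and main obstacle.} Combining, for $\delta$, the cutoff, and $\kappa$ chosen successively small we obtain $\|w_\kappa - v\|_{L^2(\Omega;X)}<\varepsilon$ with $w_\kappa\in C^\infty_c(\Omega;X)\cap\AA(\Omega)$, which proves the density. The only genuinely delicate point is the verification that convolution preserves the pointwise constraint $w_\kappa(x)\in A$ for \emph{every} $x$ (not just a.e.), i.e.\ the infinite-dimensional Jensen inequality for the closed convex set $A\subset X$; this is where closedness and convexity of $A$ are essential, and it is cleanest to argue via separating functionals: for every $x^*\in X^*$ and $c\in\R$ with $A\subset\{x^*\le c\}$ one has $\langle x^*, w_\kappa(x)\rangle=\int\eta_\kappa(y)\langle x^*, w(x-y)\rangle\,\d y\le c$, and since $A$ is the intersection of all such half-spaces, $w_\kappa(x)\in A$. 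Everything else is the standard mollification machinery. Note the hypothesis $0\in A^\circ$ is used only in Step 1 to justify the shrinking $(1-\delta)v\to v$ while keeping room to absorb the cutoff and mollification errors into $A$; in fact closedness plus $0\in A$ already suffice for Step 2, but the interior assumption makes Step 1 transparent and matches how the lemma is applied to $\KK(\Omega)$ in the main text.
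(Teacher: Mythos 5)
Your proof is correct, but it follows a genuinely different route from the paper's. You approximate $v$ by shrinking, cutting off near $\partial\Omega$, and then mollifying, with the key point that convolution against a probability kernel preserves the pointwise constraint because a closed convex set is the intersection of the closed half-spaces containing it (the vector-valued Jensen argument); smoothness and $L^2$-convergence of the Bochner convolution are the standard facts you invoke. The paper instead shrinks $v$ to $\overline v=(1-\varepsilon)v$, takes an \emph{arbitrary} sequence $v_n\in C^\infty_c(\Omega;X)$ approximating $\overline v$ in $L^2(\Omega;X)$ (using density as a black box, with no attempt to respect the constraint), and then multiplies $v_n$ by a smooth scalar cutoff $\varphi_n$ that vanishes on the compact set where $v_n\notin A^\circ$ and equals one where $v_n\in(1-\tfrac{\varepsilon}{2})A$; the ball $\overline{B_X(0,\delta)}\subset A$ provides the margin showing that the ``bad'' set has small measure (a Chebyshev-type estimate), so the cutoff changes $v_n$ only on a set of small measure. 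What each approach buys: yours avoids the measure-theoretic bookkeeping and, as you correctly observe, never really needs $0\in A^\circ$ -- convexity, closedness and $0\in A$ suffice, so your argument proves a slightly stronger statement; the paper's argument, in turn, needs no convolution of Banach-space-valued functions and no separation theorem, relying only on the density of $C^\infty_c(\Omega;X)$ in $L^2(\Omega;X)$ and elementary convexity, at the price of genuinely using the interior-point hypothesis. Both proofs share the $(1-\varepsilon)$-shrinking device, which in your version is only cosmetic.
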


\begin{proof}
    Let $v \in \AA(\Omega)$ and $\varepsilon \in (0, 1)$ be arbitrary. By assumption there exists $\delta>0$ such that 
    $\overline{B_X(0,\delta)} \subset A$.  We set $\overline{v} := (1 - \varepsilon) v$ 
	and select a sequence $\sequence{v}{n} \subset C^\infty_c(\Omega; X)$ such that
    \begin{equation}\label{eq:taudense}
	    \norm{v_n - \overline{v}}{\lz{X}}^2 \leq \frac{\delta^2 \varepsilon^3}{4n}
	    \quad \forall\, n\in \N.
    \end{equation}    	
    We moreover define
	\begin{align*}
		S_n^c := \{ x \in \Omega : v_n(x) \in X \setminus A^\circ \}, \quad
		S_n^o := \{ x \in \Omega : v_n(x) \in X \setminus (1 - \tfrac{\varepsilon}{2})A \}.
	\end{align*}
	Hence, $S_n^c \subset S_n^o$ and, by continuity  and compact support of $v_n$, 
	$S_n^c$ is compact, while $S_n^o$ is open. Thus, for every $n\in \N$, 
    there is a function $\varphi_n \in C^\infty(\Rn; [0,1])$ with $\varphi_n \equiv 1$ in $\Rn \setminus S_n^o$ and
	$\varphi_n \equiv 0$ in $S_n^c$. Furthermore, if 
	$\| v_n(x) - \overline{v}(x) \|_X \leq \frac{\varepsilon}{2}\,\delta$, then the convexity of $A$ and 
	$\overline{B_X(0,\delta)}\subset A$ imply
	\begin{align*}
		\frac{v_n(x)}{1 - \frac{\varepsilon}{2}} = 
		\frac{1 - \varepsilon}{1 - \frac{\varepsilon}{2}}\,v(x) 
		+ \Big(1 - \frac{1 - \varepsilon}{1 - \frac{\varepsilon}{2}}\Big)\,
		\frac{2}{\varepsilon}\,\big(v_n(x) - \overline{v}(x)\big) \in A
	\end{align*}
	Therefore, we obtain by contraposition that
	\begin{align*}
		\norm{v_n - \overline{v}}{\lz{X}}^2 \geq \int_{S_n^o} \| v_n - \overline{v} \|_X^2\d x
		\geq \frac{\varepsilon^2}{4} \,\delta^2 \,| S_n^o |
	\end{align*}
	so that \eqref{eq:taudense} yields $| S_n^c | \leq | S_n^o | \leq \varepsilon/n$. Thus, 
	due to Lebesgue's dominated convergence theorem, there exists
	$N = N(\varepsilon) \in \mathbb{N}$ such that
	\begin{equation}\label{eq:tauSo}
		\norm{\overline{v}}{L^2(S_N^o; X)} \leq \norm{v}{L^2(S_N^o; X)} \leq \varepsilon.
	\end{equation}
    Now we define $v_s := \varphi_N\, v_N$. Then, by construction 
    $v_s \in \AA(\Omega) \cap C^\infty_c(\Omega; X)$ and, in addition, 
    \eqref{eq:taudense} and \eqref{eq:tauSo} imply
	\begin{align*}
		\norm{v - v_s}{\lz{X}} &\leq
		\norm{v - \overline{v}}{\lz{X}} + \norm{\overline{v} - v_N}{\lz{X}}
		+ \norm{v_N - v_s}{\lz{X}} \\
		&\leq \varepsilon \norm{v}{\lz{X}} + \norm{\overline{v} - v_N}{\lz{X}}
		+ \norm{v_N}{L^2(S_N^o; X)} \\
		&\leq \varepsilon \norm{v}{\lz{X}} + 2\,\norm{\overline{v} - v_N}{\lz{X}}
		+ \norm{\overline{v}}{L^2(S_N^o; X)} \\
		&\leq \varepsilon \Big(\norm{v}{\lz{X}} + \frac{\delta\sqrt{\varepsilon}}{\sqrt{N}} + 1\Big).
	\end{align*}
	Since $\varepsilon$ was arbitrary, this finishes the proof.
\end{proof}

\end{appendix}

\bibliographystyle{siamplain}
\bibliography{perfectplastdisp}
\end{document}